\theoremstyle{plain}
\newtheorem{theorem}{Theorem}[section]
\newtheorem{lemma}{Lemma}[section]
\newtheorem{proposition}{Proposition}[section]
\newtheorem{corollary}{Corollary}[section]
\numberwithin{equation}{section}
\theoremstyle{definition}
\theoremstyle{remark}
\newtheorem{remark}{Remark}[section]
\numberwithin{equation}{section}
\newcommand{\I}{\textnormal{I}}
\newcommand{\II}{\textnormal{I\hspace*{-0.4ex}I}}
\newcommand{\Ip}{{\textnormal{I\hspace*{-0.3ex}}^\prime}} 
\newcommand{\IIp}{{\textnormal{I\hspace*{-0.4ex}I\hspace*{-0.3ex}}^\prime}}
\begin{document}

\markboth{Somphong Jitman}
{Good Integers and Applications in Coding Theory}

%
 
%

\title{Good Integers and some Applications in Coding Theory}

\author{Somphong Jitman\\
    \small Department of Mathematics, Faculty of Science, 
    Silpakorn University,\\ \small   Nakhon Pathom 73000,  Thailand.\\ \small {Email: sjitman@gmail.com}}
%
%

\maketitle

\begin{abstract}
	A class of good integers has been introduced by P. Moree in $1997$  together with  the characterization of  good odd integers. Such integers have shown to have nice number theoretical properties  and wide applications. In this paper, a complete characterization of all good integers is given. 
Two subclasses of   good integers are introduced, namely, oddly-good and evenly-good integers. The characterization and properties of  good   integers in these two subclasses  are determined.
As applications,  good integers  and oddly-good integers are applied  in the study of  the hulls of  abelian codes.  The average dimension of   the  hulls of abelian codes  is given  together with some upper and lower bounds.    

We note that   the published version \cite{J2017}  contains some errors in  \cite[Proposition 2.1]{J2017} and \cite[Proposition 2.3]{J2017}.    These results have been corrected and updated in this manuscript. The correction  does not affect any other part of \cite{J2017}.
\end{abstract}

\noindent Keywords: {Good integers; Abelian  codes;  Hull of abelian codes; Euclidean inner product; Hermitian inner product.}

\noindent  Mathematics Subject Classification 2010: 11N25, 11B83, 94B15, 94B60

\section{Introduction}

The concept of good integers has been introduced in \cite{M1997} by P. Moree.  For fixed  coprime nonzero integers $a$ and $b$,  a positive  integer $d$ is said to be {\em good (with respect to $a$ and $b$)} if  it is a divisor of $a^k+b^k$ for some integer $k\geq 1$. Otherwise, $d$ is said to be {\em bad}.     Denote by $G_{(a,b)}$   the set of good   integers defined with respect to $a$ and $b$.  The  characterization of odd integers in  $G_{(a,b)}$ has been given in \cite{M1997}.   In \cite{KG1969}, the set $G_{(q,1)}$ has been  studied  and  applied in constructing BCH codes with good design distances, where $q$ is a prime power. The set   $G_{(2^\nu ,1)}$  has been applied in counting the Euclidean self-dual  cyclic and abelian codes over finite fields in  \cite{JLX2011} and \cite{JLLX2012}, respectively.

In this paper, two subclasses of good integers are introduced as follows.  For given nonzero coprime integers  $a$ and $b$, a positive integer $d$ is said to be {\em oddly-good (with respect to $a$ and $b$)} if $d$ divides $a^k+b^k$ for some odd integer $k\geq 1$, and {\em evenly-good (with respect to $a$ and $b$)} if $d$ divides $a^k+b^k$ for some even integer $k\geq 2$. Therefore, $d$ is   { good} if it is oddly-good or evenly-good. Denote by $OG_{(a,b)}$ (resp.,    $EG_{(a,b)}$) the set of oddly-good (resp., evenly-good) integers defined with respect to $a$ and $b$.  Clearly, $G_{(a,b)}=G_{(b,a)} $, $OG_{(a,b)}=OG_{(b,a)} $ and $EG_{(a,b)}=EG_{(b,a)} $.  In \cite{JLS2014}, some basic properties of  $OG_{(2^\nu,1)}$ and $EG_{(2^\nu ,1)}$ have been studied and applied in enumerating Hermitian self-dual abelian  codes over finite fields.   

The hull of a linear code, the intersection of a code and it dual, is key to determine   the complexity  of algorithms    in determining  the automorphism group of a linear code and testing the permutation  equivalence of  two codes (see \cite{Sendrier997}, \cite{Leon82}, \cite{Leon91}, \cite{Leon97}, \cite{Sendrier00} and \cite{Sendrier01}).   Precisely, most of the algorithms do  not work if the  size of the hull  is large.   In \cite{Sendrier97}, the number of distinct linear codes of length $n$ over  $\mathbb{F}_q$ which have  hulls of a given dimension has been established.  
In \cite{S2003},   some additional properties of $G_{(q,1)}$  have been studied and applied in the determination of the average dimension of the hulls of cyclic codes.  In \cite{DMS2007} and \cite{DMS2014},  $G_{(q,1)}$ has been applied in the study of some Hermitian self-dual codes.  Later, in \cite{SJLP2015},   $G_{(q,1)}$ has been  applied in  determining the dimensions of the hulls of cyclic and negacyclic codes over finite fields.    To the best of our knowledge,  properties of the hulls of  abelian codes have not been well studied.

In this paper, we aim to   characterize  the classes of good integers, oddly-good integers and evenly-good integers defined with respect to arbitrary coprime   nonzero  integers $a $ and $b$.  
As applications,   the hulls of abelian codes are studied using good and oddly-good integers. The   average dimension of the hulls of abelian codes is determined under both the Euclidean and Hermitian inner products.

The paper is organized as follows. In Section $2$, some properties of good odd integers are recalled and the  characterization of all good integers is given. The characterizations of oddly-good and evenly-good  integers
are given in Section $3$.  In Section $4$, 
applications of good integers in  determining the  average dimension of   the  hulls of abelian codes are given.  The summary is given in Section~5.

\section{Good Integers}

In this section,  some basic properties of  good odd integers in \cite{M1997}  are recalled and the characterization of  arbitrary good integers is given.

For  pairwise coprime nonzero  integers $a,b$ and $n>0$, let ${\rm ord}_n(a)$ denote the multiplicative order $a$ modulo $n$. In this case, $b^{-1} $ exists in the multiplicative group $\mathbb{Z}_n^\times$.  Let ${\rm ord}_n(\frac{a}{b})$ denote the multiplicative order $ab^{-1}$ modulo $n$.  Denote by $2^\gamma ||n$ if $\gamma $ is the largest integer such that $2^\gamma |n$. 

From the definition, $1$ is always good and we have the following property.
\begin{lemma}
    \label{odd-good2} Let $a$ and $ b$  be nonzero coprime  integers and let $d$ be a positive    integer. If    $d\in G_{(a,b)}$, then     $\gcd(a,d)=1=\gcd(b,d)$.  
\end{lemma}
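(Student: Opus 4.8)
The plan is to argue by contradiction using a prime divisor, exploiting the coprimality of $a$ and $b$. Suppose $d\in G_{(a,b)}$, so by definition there is an integer $k\geq 1$ with $d\mid a^k+b^k$. Assume, for contradiction, that $\gcd(a,d)>1$ and pick a prime $p$ dividing $\gcd(a,d)$.

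The key steps, in order, would be as follows. First, from $p\mid d$ and $d\mid a^k+b^k$ deduce $p\mid a^k+b^k$. Second, from $p\mid a$ deduce $p\mid a^k$, and hence $p\mid (a^k+b^k)-a^k=b^k$. Third, since $p$ is prime, conclude $p\mid b$, so that $p\mid\gcd(a,b)=1$, which is impossible. This contradiction gives $\gcd(a,d)=1$. Finally, invoking the symmetry $G_{(a,b)}=G_{(b,a)}$ (or simply repeating the same three steps with the roles of $a$ and $b$ interchanged, noting $p\mid b\Rightarrow p\mid b^k\Rightarrow p\mid a^k\Rightarrow p\mid a$), one obtains $\gcd(b,d)=1$ as well.

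There is no real obstacle here: the argument is entirely elementary, the only point worth stating explicitly being that primality of $p$ is what promotes $p\mid b^k$ to $p\mid b$, and that this is where the hypothesis $\gcd(a,b)=1$ is used to reach the contradiction.
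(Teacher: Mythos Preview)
Your proposal is correct and follows essentially the same argument as the paper's proof: assume a prime $p$ divides $\gcd(a,d)$ (using the symmetry $G_{(a,b)}=G_{(b,a)}$ to reduce to this case), deduce $p\mid b^k$ from $d\mid a^k+b^k$, and contradict $\gcd(a,b)=1$. Your write-up is slightly more explicit about why $p\mid b^k$ implies $p\mid b$, but the approach is identical.
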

\begin{proof}
    Assume that   $d\in G_{(a,b)}$. Suppose that $\gcd(a,d)\ne 1$ or $\gcd(b,d)\ne 1$.   Since  $G_{(a,b)}=G_{(b,a)}$,  we may assume that  $p|\gcd(a,d)$ for some prime $p$.  Then there exists a positive integer $k$ such that  $d|(a^k+b^k)$   which implies that $p|b^k$.   It follows that $p|\gcd(a,b)$, a contradiction.
\end{proof}

Properties of good odd integers have been studied in \cite{M1997}. Some results used in this paper  are summarized as follows.

\begin{lemma}
    [{\cite[Proposition 2]{M1997}}]\label{2order} Let $p$ be an odd prime and let $r$ be a positive integer. If $p^r$ is good, then ${\rm ord}_{p^r}(\frac{a}{b})=2s$, where $s$ is the smallest positive integer such that $({a}{b}^{-1})^s\equiv -1 \,({\rm mod}\, p^{r})$. 
\end{lemma}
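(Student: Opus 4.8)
The plan is to translate the divisibility hypothesis into a statement about the multiplicative order of $ab^{-1}$ in the group $\mathbb{Z}_{p^r}^\times$ and then trap this order between a lower and an upper bound. By Lemma~\ref{odd-good2}, the goodness of $p^r$ forces $\gcd(a,p^r)=1=\gcd(b,p^r)$, so $b^{-1}$ exists modulo $p^r$; put $c\equiv ab^{-1}\pmod{p^r}$. For every integer $k\geq 1$ one has $p^r\mid a^k+b^k$ precisely when $c^k\equiv -1\pmod{p^r}$. Hence the hypothesis says that the set $\{k\geq 1:c^k\equiv -1\pmod{p^r}\}$ is nonempty, and its least element is exactly the integer $s$ appearing in the statement. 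Writing $e={\rm ord}_{p^r}(\frac{a}{b})={\rm ord}_{p^r}(c)$, the task is to prove $e=2s$.

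First I would obtain $e\le 2s$. Since $p$ is odd we have $-1\not\equiv 1\pmod{p^r}$, so $c^s$ is an element of order $2$ in $\mathbb{Z}_{p^r}^\times$, because $(c^s)^2=c^{2s}\equiv 1\pmod{p^r}$. Applying the identity ${\rm ord}(c^s)=e/\gcd(e,s)$ gives $e/\gcd(e,s)=2$, whence $e$ is even and $e/2=\gcd(e,s)$ divides $s$; in particular $e\mid 2s$, so $e\le 2s$. For the reverse inequality I would examine $c^{e/2}$: it satisfies $(c^{e/2})^2=c^e\equiv 1\pmod{p^r}$, and since $p$ is an odd prime the group $\mathbb{Z}_{p^r}^\times$ is cyclic, so $1$ has only the two square roots $\pm 1$ there. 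Thus $c^{e/2}\equiv 1$ or $c^{e/2}\equiv -1\pmod{p^r}$; the former is impossible because $e/2<e={\rm ord}_{p^r}(c)$, so $c^{e/2}\equiv -1\pmod{p^r}$, and the minimality of $s$ then yields $s\le e/2$, i.e. $e\ge 2s$.

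Combining the two bounds gives $e=2s$, which is the claim. Conceptually there is nothing delicate here; the one step that genuinely uses the hypothesis that $p$ is an odd prime is the square-root argument, and I would single it out as the point to get right, since it is precisely the cyclicity of $\mathbb{Z}_{p^r}^\times$ that restricts the square roots of $1$ to $\pm 1$. (For $p=2$ and $r\ge 3$ this fails: $1$ has four square roots in $\mathbb{Z}_{2^r}^\times$, and the lower bound $e\ge 2s$ would collapse, so the even prime must be handled separately.)
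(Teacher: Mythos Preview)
Your argument is correct. The paper does not supply its own proof of this lemma; it simply quotes the result from \cite[Proposition~2]{M1997}. Your write-up is the standard order-theoretic argument: from $c^{s}\equiv -1$ you get $e\mid 2s$ and $e$ even, and then cyclicity of $\mathbb{Z}_{p^{r}}^{\times}$ for odd $p$ forces $c^{e/2}\equiv -1$, giving $s\le e/2$ by minimality. Both steps are sound, and your remark that the odd-prime hypothesis is used precisely in the square-root step is accurate. One cosmetic point: the detour through ${\rm ord}(c^{s})=e/\gcd(e,s)=2$ is not needed for the bound $e\le 2s$ itself (that follows directly from $c^{2s}\equiv 1$), but it is what you use to conclude $e$ is even, so it earns its place.
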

\begin{lemma}
    [{\cite[Proposition 4]{M1997}}]\label{ord} Let $p$ be an odd prime and let $r$ be a positive integer. Then ${\rm ord}_{p^r}(\frac{a}{b})={\rm ord}_{p}(\frac{a}{b})p^i$ for some $i\geq 0$. 
\end{lemma}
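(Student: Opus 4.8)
The plan is to work in the group $(\mathbb{Z}/p^r\mathbb{Z})^\times$ and to compare the order of $c := ab^{-1}$ there with its order after reduction modulo $p$. Write $n = {\rm ord}_{p^r}(\frac{a}{b})$ and $m = {\rm ord}_{p}(\frac{a}{b})$, which are the orders of $c$ in $(\mathbb{Z}/p^r\mathbb{Z})^\times$ and in $(\mathbb{Z}/p\mathbb{Z})^\times$, respectively.

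First I would record two standard facts. The reduction map $\pi\colon (\mathbb{Z}/p^r\mathbb{Z})^\times \to (\mathbb{Z}/p\mathbb{Z})^\times$ is a surjective group homomorphism sending $c$ to $c \bmod p$; since the image of an element under a homomorphism has order dividing the order of the element, we get $m \mid n$. Moreover $\ker\pi$ consists exactly of the residues congruent to $1$ modulo $p$, so $|\ker\pi| = \phi(p^r)/\phi(p) = p^{r-1}(p-1)/(p-1) = p^{r-1}$; in particular $\ker\pi$ is a $p$-group, and every element of it has order a power of $p$.

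Now I would combine these. From $\pi(c^m) = \pi(c)^m = 1$ it follows that $c^m \in \ker\pi$, hence ${\rm ord}_{p^r}(c^m) = p^i$ for some $i$ with $0 \le i \le r-1$. On the other hand, the elementary identity ${\rm ord}(g^k) = {\rm ord}(g)/\gcd({\rm ord}(g),k)$, applied with $k = m$ and using $m \mid n$, gives ${\rm ord}_{p^r}(c^m) = n/\gcd(n,m) = n/m$. Equating the two expressions yields $n = m p^i$, that is, ${\rm ord}_{p^r}(\frac{a}{b}) = {\rm ord}_{p}(\frac{a}{b})\, p^i$ for some $i \geq 0$, as claimed.

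I do not expect a serious obstacle here: the only points needing a little care are the computation $|\ker\pi| = p^{r-1}$ (equivalently, that $1 + p\mathbb{Z}$ reduced mod $p^r$ forms a $p$-group of that size) and the fact that $m \mid n$, which is exactly what licenses the clean value $n/m$ for ${\rm ord}_{p^r}(c^m)$. Alternatively one could use that $(\mathbb{Z}/p^r\mathbb{Z})^\times$ is cyclic of order $p^{r-1}(p-1)$ for odd $p$ and argue with divisors of this number, but the homomorphism argument above avoids even needing cyclicity.
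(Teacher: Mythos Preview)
Your proof is correct. The paper does not supply its own argument for this lemma; it is stated without proof and attributed to \cite[Proposition 4]{M1997}, so there is nothing in the paper to compare your approach against. Your route via the reduction homomorphism $(\mathbb{Z}/p^r\mathbb{Z})^\times \to (\mathbb{Z}/p\mathbb{Z})^\times$, the fact that its kernel has order $p^{r-1}$, and the identity ${\rm ord}(g^k)={\rm ord}(g)/\gcd({\rm ord}(g),k)$ is clean and self-contained.
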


\begin{lemma}
    [{\cite[Theorem 1]{M1997}}]\label{goodP} Let $d>1$ be an odd integer. Then $d\in G_{(a,b)}$   if and only if there exists $s\geq 1$ such that $2^s|| {\rm ord}_{p}(\frac{a}{b})$ for every prime $p$ dividing~$d$. 
\end{lemma}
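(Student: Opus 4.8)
The plan is to translate the divisibility condition into a statement about the multiplicative order of $t:=ab^{-1}$ modulo prime powers, and to exploit that the groups $\mathbb{Z}_{p^r}^\times$ are cyclic because $d$, hence every prime $p\mid d$, is odd. First I would record the elementary fact: for an odd prime power $p^r$ with $\gcd(ab,p)=1$, the congruence $t^k\equiv -1\pmod{p^r}$ is solvable in integers $k\geq 1$ if and only if ${\rm ord}_{p^r}(\tfrac ab)$ is even, and in that case the solutions are exactly the odd multiples of $\tfrac12{\rm ord}_{p^r}(\tfrac ab)$; this is because $-1$ is the unique element of order $2$ in the cyclic group $\mathbb{Z}_{p^r}^\times$ (here $p$ odd is used), so $-1\in\langle t\rangle$ iff $2\mid{\rm ord}_{p^r}(t)$, and then $t^k\equiv-1$ iff $k\equiv\tfrac12{\rm ord}_{p^r}(t)\pmod{{\rm ord}_{p^r}(t)}$. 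In particular, when solvable, every solution $k$ satisfies $2^{s-1}\|k$, where $2^{s}\|{\rm ord}_{p^r}(\tfrac ab)$ (this refines Lemma~\ref{2order}). Combining this with Lemma~\ref{ord} — which gives ${\rm ord}_{p^r}(\tfrac ab)={\rm ord}_{p}(\tfrac ab)p^i$ and hence, since $p$ is odd, $2^{s}\|{\rm ord}_{p^r}(\tfrac ab)\iff 2^{s}\|{\rm ord}_{p}(\tfrac ab)$ — and with the observation that $d\mid a^k+b^k$ iff $p^{r_p}\mid a^k+b^k$ for every prime power $p^{r_p}\|d$ iff $t^k\equiv -1\pmod{p^{r_p}}$ for every such $p$, the whole problem reduces to the primes dividing $d$.

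For the forward implication, suppose $d\in G_{(a,b)}$ and pick $k\geq 1$ with $d\mid a^k+b^k$. By Lemma~\ref{odd-good2}, $\gcd(ab,d)=1$, so $t=ab^{-1}$ is defined modulo each $p\mid d$, and $p\mid a^k+b^k$ gives $t^k\equiv -1\pmod p$. By the elementary fact, ${\rm ord}_{p}(\tfrac ab)$ is even, say $2^{s_p}\|{\rm ord}_{p}(\tfrac ab)$ with $s_p\geq 1$, and $2^{s_p-1}\|k$. Since the exact power of $2$ dividing $k$ does not depend on $p$, all the $s_p$ coincide; calling the common value $s\geq 1$, we get $2^s\|{\rm ord}_{p}(\tfrac ab)$ for every prime $p\mid d$.

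For the converse, assume $2^s\|{\rm ord}_{p}(\tfrac ab)$ for all $p\mid d$ with $s\geq 1$; in particular $\gcd(ab,d)=1$, so $t$ is defined modulo $d$. For each prime power $p^{r_p}\|d$, Lemma~\ref{ord} and the oddness of $p$ give $2^s\|{\rm ord}_{p^{r_p}}(\tfrac ab)$, so write ${\rm ord}_{p^{r_p}}(\tfrac ab)=2^sM_p$ with $M_p$ odd, and set $k:=2^{s-1}\cdot{\rm lcm}\{M_p : p^{r_p}\|d\}\geq 1$. Then for each such $p$ the quotient $k/(2^{s-1}M_p)$ is an odd integer, so $k$ is an odd multiple of $2^{s-1}M_p=\tfrac12{\rm ord}_{p^{r_p}}(\tfrac ab)$, whence $t^k\equiv -1\pmod{p^{r_p}}$ by the elementary fact, i.e. $p^{r_p}\mid a^k+b^k$. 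As this holds for every prime power exactly dividing $d$, we conclude $d\mid a^k+b^k$, that is, $d\in G_{(a,b)}$. The routine parts are the cyclic-group lemma about solvability of $x^k=-1$ and the $2$-adic bookkeeping; the point requiring care is producing a single exponent $k$ in the converse that works simultaneously for all prime-power divisors of $d$, which is exactly where the hypothesis of a common $s$ for every $p$ is used — the common solvability of the system $\{t^k\equiv -1\pmod{p^{r_p}}\}$ forces all the $2$-adic valuations of the orders ${\rm ord}_{p^{r_p}}(\tfrac ab)$, equivalently of ${\rm ord}_{p}(\tfrac ab)$, to agree.
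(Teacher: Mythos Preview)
The paper does not supply its own proof of this lemma; it is quoted as \cite[Theorem 1]{M1997} without argument, so there is no in-paper proof to compare against directly. Your proof is correct.

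For what it is worth, your argument is essentially the one Moree gives and the one the present paper reproduces for the closely related Proposition~\ref{odd-good}: in both places the key simultaneous-solvability step is packaged as Lemma~\ref{congr} (the system $x\equiv a_i\pmod{2a_i}$ is solvable iff all the $a_i$ share the same $2$-adic valuation), whereas you bypass that lemma by writing down the explicit witness $k=2^{s-1}\cdot\mathrm{lcm}\{M_p\}$. These are the same idea in different clothing --- your $k$ is precisely a solution of the congruence system whose solvability Lemma~\ref{congr} characterizes --- so the approaches coincide up to presentation, with yours being slightly more constructive.
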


It has been briefly discussed in \cite{M1997} that a good even integer exists if   and only  if $ab$ is odd.  For completeness, the characterization and properties of all good (odd and even) integers  are given as follows.

(The following proposition is a corrected version of    {\cite[Proposition~2.1 ]{J2017}}.)
\begin{proposition} \label{evengood} Let $a$ and $b$   be  coprime odd  integers  and  let $\beta\geq 1$ be  an  integer. Then the following statements are equivalents.
    \begin{enumerate}[$1)$]
        \item
        $2^\beta\in G_{(a,b)}$.
        \item      $2^\beta|(a+b)$.
        \item  $ab^{-1} \equiv -1 ~{\rm mod}~ 2^\beta$. 
    \end{enumerate}
\end{proposition}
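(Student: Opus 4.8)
The plan is to prove $(i)\Leftrightarrow(ii)$ and $(i)\Leftrightarrow(iii)$. Throughout one uses that $b$ is invertible modulo $2^\beta$ (since $b$ is odd), so that all three statements concern only the residue of $ab^{-1}$ modulo $2^\beta$. Write $v_2(\cdot)$ for the $2$-adic valuation. The implication $(i)\Rightarrow(ii)$ is immediate: $2^\beta\mid a+b=a^1+b^1$ gives $2^\beta\in G_{(a,b)}$.

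For $(ii)\Rightarrow(i)$ the key point is to compute $v_2(a^k+b^k)$ exactly, for odd $a,b$. If $k$ is odd, then $a^k+b^k=(a+b)(a^{k-1}-a^{k-2}b+\cdots+b^{k-1})$ and the second factor is a sum of $k$ odd integers, hence odd, so $v_2(a^k+b^k)=v_2(a+b)$. If $k$ is even, then $a^k$ and $b^k$ are squares of odd integers, so both are $\equiv 1\pmod 8$ and $a^k+b^k\equiv 2\pmod 8$, whence $v_2(a^k+b^k)=1$. Since $a+b$ is even we have $v_2(a+b)\ge 1$, so in either case $v_2(a^k+b^k)\le v_2(a+b)$; hence $2^\beta\mid a^k+b^k$ for some $k\ge 1$ forces $2^\beta\mid a+b$. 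This settles $(i)\Leftrightarrow(ii)$.

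For $(i)\Rightarrow(iii)$: the condition $2^\beta\mid a+b$ is equivalent to $ab^{-1}\equiv -1\pmod{2^\beta}$; if $\beta=1$ the first alternative of $(iii)$ holds, and if $\beta\ge 2$ then $-1\not\equiv 1\pmod{2^\beta}$, so $ab^{-1}$ has multiplicative order exactly $2$. Conversely, for $(iii)\Rightarrow(i)$: if $\beta=1$ then $a+b$ is even and there is nothing to prove, while if ${\rm ord}_{2^\beta}(\frac{a}{b})=2$ one must deduce $ab^{-1}\equiv -1\pmod{2^\beta}$, i.e., that an element of order $2$ in $\mathbb{Z}_{2^\beta}^\times$ is necessarily $-1$; for $\beta=2$ this is immediate, since $\mathbb{Z}_4^\times$ is cyclic of order $2$.

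The step I expect to be the main obstacle is the case $\beta\ge 3$ of $(iii)\Rightarrow(i)$: here $\mathbb{Z}_{2^\beta}^\times$ is no longer cyclic, so the bare order-$2$ hypothesis does not by itself single out a residue, and one must feed it back into the valuation bookkeeping above. The mechanism is that ${\rm ord}_{2^\beta}(\frac{a}{b})=2$ yields $2^\beta\mid (ab^{-1}-1)(ab^{-1}+1)$ while $2^\beta\nmid ab^{-1}-1$, and that $ab^{-1}-1$ and $ab^{-1}+1$ are consecutive even integers, so exactly one of them is $\equiv 2\pmod 4$ while the other carries almost the full power $2^\beta$; deciding which factor absorbs $2^\beta$, and excluding the competing residues, is the heart of the argument and the point where the arithmetic of powers of $2$ genuinely parts ways with that of odd primes (contrast Lemmas~\ref{2order}--\ref{goodP}). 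The remaining ingredients — the factorization identities for $a^k\pm b^k$ and the congruences modulo $8$ — are routine.
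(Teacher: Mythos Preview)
Your treatment of $(i)\Leftrightarrow(ii)$ and $(i)\Rightarrow(iii)$ is correct and close to the paper's (your explicit computation of $v_2(a^k+b^k)$ is in fact a bit cleaner than the paper's extraction of ``$k$ must be odd''). The issue is $(iii)\Rightarrow(i)$ for $\beta\ge 3$: you rightly single this out as the crux, set up the factorisation $2^\beta\mid(ab^{-1}-1)(ab^{-1}+1)$, and then stop short of ``excluding the competing residues''.

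That step cannot be completed, because the implication $(iii)\Rightarrow(i)$ is \emph{false} as stated. For $\beta\ge 3$ the group $(\mathbb{Z}/2^\beta\mathbb{Z})^\times$ has three elements of order~$2$, namely $-1$, $2^{\beta-1}+1$, and $2^{\beta-1}-1$, and only the first yields $2^\beta\mid a+b$. Concretely, take $\beta=3$, $a=5$, $b=1$: then $5^2\equiv 1\pmod 8$ with $5\not\equiv 1$, so $\mathrm{ord}_8(\tfrac{a}{b})=2$ and (iii) holds, yet $a+b=6$ so (i) fails (and $5^k+1\equiv 2$ or $6\pmod 8$ for every $k\ge 1$, so (ii) fails too). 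The paper's own proof of this implication simply asserts that $(ab^{-1})^2\equiv 1$ and $ab^{-1}\not\equiv 1$ force $ab^{-1}\equiv -1\pmod{2^\beta}$; that is precisely the gap you flagged, and it is a genuine error. The equivalence becomes correct if condition (iii) is strengthened to ``$\beta=1$ or $ab^{-1}\equiv -1\pmod{2^\beta}$'', which is what is actually used downstream in the paper.
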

\begin{proof}
    To prove 1) implies 2), assume that   $2^\beta \in G_{(a,b)}$.  If  $\beta=1$, then   $2^\beta|(a+b)$   since $a+b$ is even. Then  $2^\beta|(a^k+b^k)$ for some integer $k\geq 1$.
    Assume that $\beta>1$. Then $4|(a^k+b^k)$. If $k$ is even, then    $a^k\equiv 1 \ {\rm mod\ } 4$ and $b^k\equiv 1 \ {\rm mod\ } 4$  which implies that $(a^k+b^k)\equiv 2\, {\rm mod }\, 4$, a contradiction. 
    It follows that   $k$ is odd.  Since $a^k+b^k=(a+b)\left(\sum\limits_{i=0}^{k-1} (-1)^ia^{k-1-i}b^i \right)$ and  $\sum\limits_{i=0}^{k-1} (-1)^ia^{k-1-i}b^i $ is odd, we have that  $2^\beta |(a+b)$.  
    
    The statement  2) $\Rightarrow$  1)  follows from the definition. The equivalent statement  2) $\Leftrightarrow$ 3) is obvious.
\end{proof}

\begin{proposition} \label{prop2d}
    Let $a$, $b$  and $d>1$ be  pairwise coprime odd     integers.  Then    $d\in G_{(a,b)}$
    if and only if        $2d\in G_{(a,b)}$. 
    In this case, 	   ${\rm ord}_{2d}(\frac{a}{b})= {\rm ord}_{d}(\frac{a}{b})$ is even. 
\end{proposition}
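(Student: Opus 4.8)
The plan is to prove the equivalence $d\in G_{(a,b)}\iff 2d\in G_{(a,b)}$ by a CRT-style argument combining the behavior modulo $d$ with the (automatic) behavior modulo $2$. The implication $2d\in G_{(a,b)}\Rightarrow d\in G_{(a,b)}$ is immediate: if $2d\mid(a^k+b^k)$ then $d\mid(a^k+b^k)$, so $d$ is good. For the converse, suppose $d\in G_{(a,b)}$, so there is $k\geq 1$ with $d\mid(a^k+b^k)$. Since $a$ and $b$ are odd, $a^k+b^k$ is always even, hence $2\mid(a^k+b^k)$; combined with $d\mid(a^k+b^k)$ and $\gcd(2,d)=1$ this gives $2d\mid(a^k+b^k)$, so $2d\in G_{(a,b)}$. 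This part is essentially a one-line observation, so the real content is the order statement.

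For the order claim, first note that $\gcd(2d,a)=\gcd(2d,b)=1$ by Lemma~\ref{odd-good2} (and the fact that $ab$ is odd), so $ab^{-1}$ makes sense modulo $2d$, modulo $d$, and modulo $2$. By the Chinese Remainder Theorem, $\mathbb{Z}_{2d}^\times\cong\mathbb{Z}_2^\times\times\mathbb{Z}_d^\times$, and $\mathbb{Z}_2^\times$ is trivial; hence for any integer $m$ we have $(ab^{-1})^m\equiv 1\,(\mathrm{mod}\,2d)$ if and only if $(ab^{-1})^m\equiv 1\,(\mathrm{mod}\,d)$. Taking $m$ to be the order on each side, this yields $\mathrm{ord}_{2d}(\frac{a}{b})=\mathrm{ord}_{d}(\frac{a}{b})$.

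It remains to show this common order is even. Since $d\in G_{(a,b)}$ with $d>1$ odd, Lemma~\ref{goodP} supplies an $s\geq 1$ with $2^s\,\|\,\mathrm{ord}_p(\frac{a}{b})$ for every prime $p\mid d$; pick any such prime $p$ and any $r$ with $p^r\,\|\,d$. By Lemma~\ref{ord}, $\mathrm{ord}_{p^r}(\frac{a}{b})=\mathrm{ord}_p(\frac{a}{b})p^i$ for some $i\geq 0$, and since $p$ is odd this order is still divisible by $2^s$, in particular even. Because $\mathrm{ord}_{p^r}(\frac{a}{b})$ divides $\mathrm{ord}_d(\frac{a}{b})$ (as $p^r\mid d$), we conclude $\mathrm{ord}_d(\frac{a}{b})$ is even, and therefore so is $\mathrm{ord}_{2d}(\frac{a}{b})$. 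I do not anticipate a serious obstacle here; the only point requiring care is invoking the correct earlier lemmas to guarantee the order modulo $d$ is even (as opposed to merely being a well-defined number), which is exactly where Lemma~\ref{goodP} and Lemma~\ref{ord} enter.
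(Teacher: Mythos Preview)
Your proof is correct. The equivalence argument is identical to the paper's: both simply observe that $a^k+b^k$ is always even when $a,b$ are odd, so $d\mid(a^k+b^k)$ iff $2d\mid(a^k+b^k)$.

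For the order statement, however, you take a different route. The paper argues directly: taking $k$ minimal with $d\mid(a^k+b^k)$, it shows $\mathrm{ord}_d(\tfrac{a}{b})=2k$ (hence even), and then that $\mathrm{ord}_{2d}(\tfrac{a}{b})=2k$ as well, all without appealing to Lemmas~\ref{ord} or~\ref{goodP}. You instead get the equality $\mathrm{ord}_{2d}=\mathrm{ord}_d$ in one stroke via the CRT isomorphism $\mathbb{Z}_{2d}^\times\cong\mathbb{Z}_2^\times\times\mathbb{Z}_d^\times$ with $\mathbb{Z}_2^\times$ trivial, and then deduce evenness from Lemmas~\ref{goodP} and~\ref{ord}. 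Your CRT step is cleaner for the equality, at the cost of importing more machinery for the parity; the paper's approach is fully self-contained. Incidentally, your detour through Lemma~\ref{ord} is unnecessary: since $p\mid d$, already $\mathrm{ord}_p(\tfrac{a}{b})\mid\mathrm{ord}_d(\tfrac{a}{b})$, and Lemma~\ref{goodP} alone makes $\mathrm{ord}_p(\tfrac{a}{b})$ even.
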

\begin{proof}  Since $d$ is odd and $a^k+b^k$ is even for all  positive integers $k$,   $d|(a^k+b^k) $ if and only if $2d|(a^k+b^k) $.   The characterization follows immediately.
    
    Assume that $d\in  G_{(a,b)}$. Let  $k$  be the smallest positive integer such that $d|(a^k+b^k) $.  It follows that  $ (ab^{-1})^k\equiv -1\ {\rm mod\ } d$  and ${\rm ord}_{d}(\frac{a}{b})\nmid k$. Hence, ${\rm ord}_{d}(\frac{a}{b})=2k$. 
    Since $a^k+b^k$ is even and $d$ is odd, we have $2d|(a^k+b^k)$. Hence, $ (ab^{-1})^k\equiv -1\ {\rm mod\ } 2d$ which implies that  ${\rm ord}_{2d}(\frac{a}{b})|2k$.   Since ${\rm ord}_{d}(\frac{a}{b}) \leq {\rm ord}_{2d}(\frac{a}{b})$, we have  that  ${\rm ord}_{2d}(\frac{a}{b}) = {\rm ord}_{2d}(\frac{a}{b})=2k $   is even. 
    %
\end{proof}

(The next proposition is a  correction of   \cite[Proposition 2.3]{J2017}.)
\begin{proposition}  \label{prop2m} Let $a,b$ and $d>1$   be pairwise coprime odd positive integers  and let  $\beta\geq 2$ be  an  integer. Then    $2^\beta d\in G_{(a,b)}$ if and only if   $2^\beta|(a+b)$
    and  $d\in G_{(a,b)}$  is such that  $2|| {\rm ord}_{d}(\frac{a}{b})$.  
    In this case,    ${\rm ord}_{2^\beta}(\frac{a}{b})=2$  and   $2|| {\rm ord}_{2^\beta d}(\frac{a}{b})$.
\end{proposition}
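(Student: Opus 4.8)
The plan is to reduce the statement about $2^\beta d$ to the already-established statements about $2^\beta$ alone (Proposition~\ref{evengood}) and about $d$ alone (Lemma~\ref{goodP} together with Proposition~\ref{prop2d}), using the Chinese Remainder Theorem to glue the two coprime moduli $2^\beta$ and $d$. Since $\gcd(2^\beta,d)=1$, we have $\mathbb{Z}_{2^\beta d}^\times\cong\mathbb{Z}_{2^\beta}^\times\times\mathbb{Z}_d^\times$, and under this isomorphism ${\rm ord}_{2^\beta d}(\frac ab)={\rm lcm}\!\left({\rm ord}_{2^\beta}(\frac ab),{\rm ord}_d(\frac ab)\right)$. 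The key observation is that $2^\beta d\mid(a^k+b^k)$ for some $k\ge 1$ if and only if there is a common $k$ with $(ab^{-1})^k\equiv-1\pmod{2^\beta}$ and $(ab^{-1})^k\equiv-1\pmod d$ simultaneously, i.e. a single exponent $k$ that is an ``odd multiple of the half-order'' modulo both $2^\beta$ and $d$.

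First I would record the elementary fact that, for a group element $g$ of even order $2s$, one has $g^k=-1$ (meaning the unique order-$2$ candidate $g^s$) precisely when $k\equiv s\pmod{2s}$; more usefully, $g^k$ equals the element of order $2$ in $\langle g\rangle$ iff $k$ is an odd multiple of $s$. Next I would handle the forward direction: assume $2^\beta d\in G_{(a,b)}$, so $2^\beta\in G_{(a,b)}$ and $d\in G_{(a,b)}$ by taking the same witness $k$ and reducing modulo each factor. By Proposition~\ref{evengood} (with $\beta\ge 2$) this forces ${\rm ord}_{2^\beta}(\frac ab)=2$, and by Lemma~\ref{goodP} (applied to the prime power factors of $d$, via Lemma~\ref{ord}) together with the oddness characterization, $d\in G_{(a,b)}$ gives that ${\rm ord}_d(\frac ab)$ is even; I must upgrade this to $2\,\|\,{\rm ord}_d(\frac ab)$. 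Here is the crux: writing ${\rm ord}_{2^\beta}(\frac ab)=2$ and ${\rm ord}_d(\frac ab)=2^s t$ with $t$ odd, the witness $k$ must be an odd multiple of $1$ (automatic) \emph{and} an odd multiple of $2^{s-1}t$; the latter requires $s-1=0$, i.e. $s=1$, which is exactly $2\,\|\,{\rm ord}_d(\frac ab)$. For the converse, assuming ${\rm ord}_{2^\beta}(\frac ab)=2$ and $2\,\|\,{\rm ord}_d(\frac ab)$, I would exhibit a common witness: take $k$ to be an odd multiple of ${\rm ord}_d(\frac ab)/2$ large enough (any odd multiple works since $2^\beta$-side needs only $k$ odd, and an odd multiple of an odd number is odd), giving $(ab^{-1})^k\equiv-1$ modulo both $2^\beta$ and $d$, hence modulo $2^\beta d$.

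Finally, the ``in this case'' clause $2\,\|\,{\rm ord}_{2^\beta d}(\frac ab)$ follows immediately from ${\rm ord}_{2^\beta d}(\frac ab)={\rm lcm}(2,\,{\rm ord}_d(\frac ab))$: since ${\rm ord}_d(\frac ab)=2t$ with $t$ odd, the lcm is again $2t$, whose $2$-adic valuation is exactly $1$.

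I expect the main obstacle to be the bookkeeping in the forward direction — precisely the step that converts ``$d\in G_{(a,b)}$ and the witness is compatible with the $2^\beta$-part'' into the sharp conclusion $2\,\|\,{\rm ord}_d(\frac ab)$ rather than merely ``${\rm ord}_d(\frac ab)$ is even.'' The subtlety is that a single exponent $k$ must simultaneously be an odd multiple of the half-order modulo $2^\beta$ (which is trivially satisfied, forcing only $k$ odd) and an odd multiple of the half-order modulo $d$; reconciling the parity constraint ``$k$ odd'' with ``$k$ an odd multiple of ${\rm ord}_d(\frac ab)/2$'' is what pins down ${\rm ord}_d(\frac ab)/2$ to be odd. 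Everything else is a routine application of CRT and the cited lemmas.
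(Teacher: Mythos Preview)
Your proposal is correct and follows essentially the same approach as the paper's proof. Both arguments split the condition $2^\beta d\mid(a^k+b^k)$ into its $2^\beta$-part and its $d$-part, extract from Proposition~\ref{evengood} that the witness $k$ must be odd (and that ${\rm ord}_{2^\beta}(\frac{a}{b})=2$), and then use the oddness of $k$ together with $(ab^{-1})^k\equiv -1\pmod d$ to force $2\,\|\,{\rm ord}_d(\frac{a}{b})$; the converse and the ``in this case'' clause are likewise handled by the same explicit odd witness $k={\rm ord}_d(\frac{a}{b})/2$. Your CRT/lcm packaging (phrasing ``$(ab^{-1})^k\equiv -1$'' as ``$k$ is an odd multiple of the half-order'') and the ${\rm lcm}$ formula for ${\rm ord}_{2^\beta d}$ are a slightly more structural presentation of exactly the computation the paper carries out by hand, but there is no substantive difference in strategy.
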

\begin{proof} 
    Assume that   $2^\beta d\in G_{(a,b)}$.  
    Let $k$ be the smallest   positive integer  such that $2^\beta d |(a^k+b^k)$.  Then    $d|(a^k+b^k) $ and  $2^\beta |(a^k+b^k)$   which implies that  $d\in G_{(a,b)}$ and $(ab^{-1})^{2k} \equiv 1 ~{\rm mod} ~d$.  Moreover,   $2^\beta |(a+b)$   and    $k$ must be odd   by    Proposition~\ref{prop2.01} and its proof.
    Let $k'$ be the smallest  positive integer  such that  $d|(a^{k'}+b^{k'})$.  Then $ {\rm ord}_{d}(\frac{a}{b})=2k'$.  Since  $(ab^{-1})^{2k} \equiv 1 ~{\rm mod} ~d$,  we have  $k'|k$.    Consequently,  $k'$ is odd   and    $(a+b)|(a^{k'}+b^{k'})$.  Hence, $2^\beta d| (a^{k'}+b^{k'})$. By the minimality of $k$, we have  $k= k'$ and  $d|(a^k+b^k)$. Consequently, ${\rm ord}_{d}(\frac{a}{b})=2k'=2k$.
    Since  $k$ is odd,   $d\in G_{(a,b)}$ is such that  $2|| {\rm ord}_{d}(\frac{a}{b})$.

    Conversely, assume that   $2^\beta|(a+b)$ and $d\in G_{(a,b)}$ is such that  $2|| {\rm ord}_{d}(\frac{a}{b})$.   Let $k$ be the   smallest positive integer  such that  $d|(a^k+b^k)$. Then $ (ab^{-1})^{k}\equiv -1\ {\rm mod\ } d$ which implies that $ {\rm ord}_{d}(\frac{a}{b})=2k$.  Since $2|| {\rm ord}_{d}(\frac{a}{b})$, $k$ must be  odd.  It follows that    $  (ab^{-1})^{k}\equiv ab^{-1}\equiv -1\ {\rm mod\ } 2^\beta$.  Since $d$ is odd,  $ (ab^{-1})^{k}\equiv -1\ {\rm mod\ } 2^\beta d$.
    Hence,  $2^\beta d|(a^{k}+b^k)$ which means  $2^\beta d\in  G_{(a,b)} $ as desired.

    In this case,      we have $2^\beta|(a+b)$ which implies that
    ${\rm ord}_{2^\beta}(\frac{a}{b})=2$. Moreover, $ {\rm ord}_{2^\beta  d}(\frac{a}{b})= {\rm lcm} \left({\rm ord}_{2^\beta  }(\frac{a}{b}), {\rm ord}_{ d}(\frac{a}{b}) \right)=2k $ and $k$ is odd. Therefore, 
    $2|| {\rm ord}_{2^\beta  d}(\frac{a}{b})$.  
\end{proof}

From Propositions \ref{evengood}--\ref{prop2m}, the characterization of good integers  can be summarized   based on $\beta$ and the parity of $ab$   as follows.
\begin{theorem} \label{thmChar} Let $a$ and $b$    be     coprime  nonzero   integers and let $\ell=2^\beta d$ be a positive integer such that $d$ is odd and $\beta \ge 0$.  Then one of the following statements holds.
    \begin{enumerate}
        \item  If  $ab$ is odd, then $\ell=2^\beta d\in G_{(a,b)}$  if and only if  one of the following statements holds.
        \begin{enumerate}
            \item $\beta \in\{0,1\}$ and  $d=1$.
            \item $\beta \in \{0,1\}$, $d\geq 3$   and  
            there exists $s\geq 1$ such that $2^s|| {\rm ord}_{p}(\frac{a}{b})$ for every prime $p$ dividing $d$. 
           \item {$\beta \ge 2$, $d=1$  and    $ 2^\beta |(a+b)$.}
           \item $\beta \ge  2$, $d\geq 3$,     $ 2^\beta |(a+b)$  and  $ d\in G_{(a,b)}$ is such that $2|| {\rm ord}_{d}(\frac{a}{b})$.   
        \end{enumerate}
        \item  If $ab$ is even,  then $\ell=2^\beta d\in G_{(a,b)}$  if and only if    one of the following statements holds.
        \begin{enumerate}
            \item $\beta =0$   and $d=1$.
            \item  $\beta =0$, $d\geq 3$,   and 
            there exists $s\geq 1$ such that $2^s|| {\rm ord}_{p}(\frac{a}{b})$ for every prime $p$ dividing $d$. 
        \end{enumerate}
    \end{enumerate}
\end{theorem}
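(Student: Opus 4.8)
The plan is to derive Theorem \ref{thmChar} purely as a bookkeeping exercise that stitches together Lemma \ref{goodP} (for odd parts), Proposition \ref{evengood} (for pure powers of $2$), Proposition \ref{prop2d} (multiplying an odd good integer by $2$), and Proposition \ref{prop2m} (the genuinely even $\beta\ge 2$ case). First I would dispose of Case $2$, where $ab$ is even. By Lemma \ref{odd-good2}, any $d'\in G_{(a,b)}$ satisfies $\gcd(a,d')=1=\gcd(b,d')$; since one of $a,b$ is even, this forces $d'$ to be odd, so $\beta=0$ is necessary. Conversely, for odd $\ell=d$ the claim is exactly Lemma \ref{goodP} together with the trivial observation that $1$ is always good. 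This gives subcases $(a)$ and $(b)$ of Case $2$ immediately.

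The bulk of the argument is Case $1$, where $a$ and $b$ are both odd; here I would split on $\beta$. For $\beta=0$: $\ell=d$ is odd, and again Lemma \ref{goodP} (plus $1\in G_{(a,b)}$) yields subcases $(a)$ and $(b)$. For $\beta=1$: writing $\ell=2d$ with $d$ odd, Proposition \ref{prop2d} says $2d\in G_{(a,b)}$ iff $d\in G_{(a,b)}$, which by Lemma \ref{goodP} (and the $d=1$ triviality) is again precisely conditions $(a)$ and $(b)$. Thus $\beta\in\{0,1\}$ collapses to the odd-part criterion, exactly as stated. For $\beta\ge 2$ and $d=1$: $\ell=2^\beta$, so Proposition \ref{evengood} gives $2^\beta\in G_{(a,b)}$ iff ${\rm ord}_{2^\beta}(\tfrac ab)=2$ (the alternative $\beta=1$ there is excluded since $\beta\ge 2$), which is subcase $(c)$. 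For $\beta\ge 2$ and $d\ge 3$: this is the content of Proposition \ref{prop2m} (stated for positive $a,b$, which we may assume by replacing $a,b$ with $-a,-b$ if necessary, noting $a^k+b^k$ and $(-a)^k+(-b)^k$ differ only by sign and hence have the same set of divisors), giving exactly subcase $(d)$.

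To make the reduction to positive $a,b$ clean, I would open the proof with the remark that $G_{(a,b)}=G_{(-a,-b)}$ since $d\mid a^k+b^k \iff d\mid (-1)^k(a^k+b^k)=(-a)^k+(-b)^k$, and that the parity of $ab$ and all the relevant orders ${\rm ord}_n(\tfrac ab)$ are unchanged under $(a,b)\mapsto(-a,-b)$; similarly one may use $G_{(a,b)}=G_{(b,a)}$. This justifies invoking Propositions \ref{prop2d} and \ref{prop2m}, which are stated for pairwise coprime odd \emph{positive} integers. After that, the whole proof is a case table: for each pair (parity of $ab$, value of $\beta$) cite the one earlier result that governs it, and observe that the union of the cases exhausts all positive integers $\ell=2^\beta d$.

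I do not expect a serious obstacle; the only things to watch are (i) not double-counting or leaving a gap at the boundary $\beta=1$ versus $\beta\ge 2$ (Proposition \ref{evengood} and Proposition \ref{prop2m} are stated with the right thresholds, so this is automatic), and (ii) the sign/positivity reduction just described, which is routine but must be spelled out so that the citations to Propositions \ref{prop2d} and \ref{prop2m} are literally applicable. The mild subtlety worth a sentence is that in subcases $(a)$ and $(b)$ one should note that the "$2^s\|{\rm ord}_p(\tfrac ab)$" condition is vacuous when $d=1$, so the two subcases really are the decomposition of "$d$ is odd and good" into $d=1$ and $d\ge 3$.
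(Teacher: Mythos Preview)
Your case-by-case reduction is exactly what the paper does: its entire proof is the sentence ``From Propositions \ref{evengood}--\ref{prop2m}, the characterization of good integers can be summarized based on $\beta$ and the parity of $ab$,'' and you have simply unpacked that citation correctly.

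There is one genuine slip, though. Your reduction to positive $a,b$ via $G_{(a,b)}=G_{(-a,-b)}$ (and $G_{(a,b)}=G_{(b,a)}$) does \emph{not} suffice when $a$ and $b$ have opposite signs: none of these symmetries takes, say, $(3,-5)$ to a pair of positive integers, and in general $G_{(a,b)}\ne G_{(-a,b)}$ (for instance $3\in G_{(2,1)}$ but $3\notin G_{(-2,1)}$, since $(-2)^k+1\equiv 2\pmod 3$ for every $k$). The correct fix is much simpler: observe that the proof of Proposition~\ref{prop2m} nowhere uses positivity of $a$ or $b$---it works entirely in terms of congruences $(ab^{-1})^k\equiv \pm 1$ modulo $d$ and $2^\beta$---so the word ``positive'' in its hypothesis is superfluous. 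With that remark in place, your argument goes through verbatim, and is in fact more careful than the paper's one-line proof, which silently ignores the mismatch.
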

We note that the condition  ${\rm ord}_{2^\beta }(\frac{a}{b})=2$  in  Theorem \ref{thmChar} is equivalent to $2^\beta|(a+b)$ by Proposition \ref{evengood}.

From Propositions \ref{evengood}--\ref{prop2m} and Theorem \ref{thmChar}, 
other necessary conditions for  a positive integer  to be good    can be   given in the following corollary. These are   sometime useful in  applications.

\begin{corollary}\label{cor:good-gamma}
    Let $a$ and $b$    be    coprime  nonzero  integers and let $\ell=2^\beta d$ be a positive integer such that $d$ is odd and $\beta \ge 0$. Let $\gamma\geq 0$ be an integer such that $2^\gamma|(a+b)$.  If  $\ell\in G_{(a,b)}$, then  one of the following statements holds.
    \begin{enumerate}
        \item {$\ell=1$ and   $ab$ is even.}
        \item {$\ell\in \{1,2\}$ and   $ab$ is odd.}
        \item $2|| {\rm ord}_{\ell}(\frac{a}{b})$ if and only if  one of the following statements holds.
        \begin{enumerate}
            \item $d=1$ and $2\leq \beta \le \gamma$.
            \item $d\geq 3$, $2|| {\rm ord}_{p}(\frac{a}{b})$ for every prime $p$ dividing $d$, and $0\leq \beta  \leq \gamma$. 
        \end{enumerate}
        \item  There exists an integer $s\geq 2$ such that  $2^s|| {\rm ord}_{\ell}(\frac{a}{b})$ if and only if  $d\geq 3$, $2^s|| {\rm ord}_{p}(\frac{a}{b})$ for every prime $p$ dividing $d$, and $0\leq \beta  \leq 1$. 
    \end{enumerate}
\end{corollary}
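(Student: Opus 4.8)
The plan is to derive Corollary \ref{cor:good-gamma} entirely from the results already assembled: Propositions \ref{evengood}, \ref{prop2d}, \ref{prop2m}, Lemma \ref{goodP}, and the summarizing Theorem \ref{thmChar}. First I would dispose of statements $1$ and $2$ by noting that if $ab$ is even then Lemma \ref{odd-good2} forces $\ell$ to be odd, and combined with part $2$ of Theorem \ref{thmChar} we get $\ell=1$; if $ab$ is odd and $\ell\le 2$ we land in cases $1(a)$ or $2(a)$ of Theorem \ref{thmChar}, and there is nothing further to prove. So the substantive content is statements $3$ and $4$, which concern the $2$-adic valuation of ${\rm ord}_\ell(\tfrac{a}{b})$ when $\ell\ge 3$; in particular $ab$ must be odd here.

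For statement $3$, the plan is a two-way argument. Assume first that $2\,\|\,{\rm ord}_\ell(\tfrac{a}{b})$, so $\ell\in G_{(a,b)}$ and in fact $\ell$ is oddly-good; splitting on whether $d=1$ or $d\ge 3$ and invoking the relevant cases $1(c)$, $1(d)$, $1(a)$, $1(b)$ of Theorem \ref{thmChar}, together with Proposition \ref{evengood} to translate ${\rm ord}_{2^\beta}(\tfrac{a}{b})=2$ into $2^\beta\mid(a+b)$, hence $\beta\le\gamma$. The requirement $2\,\|\,{\rm ord}_p(\tfrac{a}{b})$ for every $p\mid d$ comes from combining Lemma \ref{goodP} (which gives a common $s$ with $2^s\,\|\,{\rm ord}_p(\tfrac{a}{b})$) with Lemma \ref{2order}/Lemma \ref{ord}, so that the CRT computation of ${\rm ord}_d(\tfrac{a}{b})$ as the lcm of the ${\rm ord}_{p^{r}}(\tfrac{a}{b})$ has $2$-adic valuation exactly $s$; since $2\,\|\,{\rm ord}_\ell(\tfrac{a}{b})$ and ${\rm ord}_d$ divides ${\rm ord}_\ell$, we get $s=1$. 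Conversely, assuming $(a)$ or $(b)$ of statement $3$, I would note these are exactly the hypotheses making $\ell$ good via Theorem \ref{thmChar}, and then Proposition \ref{prop2d} (case $d\ge 3$, $\beta=0$), Proposition \ref{prop2m} ($\beta\ge 2$), or a direct check ($d=1$, via Proposition \ref{evengood}) each conclude $2\,\|\,{\rm ord}_\ell(\tfrac{a}{b})$. Note that the case $d\ge 3$, $\beta=1$ is handled by applying Proposition \ref{prop2d} to the odd part $d$ and observing that adjoining the factor $2$ changes neither the order nor goodness.

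For statement $4$, the argument is parallel but shorter: if $2^s\,\|\,{\rm ord}_\ell(\tfrac{a}{b})$ for some $s\ge 2$, then by Theorem \ref{thmChar} we cannot be in cases $1(c)$ or $1(d)$ (those force the valuation to be $1$), nor in $d=1$ at all, so $d\ge 3$; moreover $\beta\le 1$, for if $\beta\ge 2$ then case $1(d)$ would apply and give $2\,\|\,{\rm ord}_d(\tfrac{a}{b})$ and $2\,\|\,{\rm ord}_\ell(\tfrac{a}{b})$, contradicting $s\ge 2$. The valuation $s$ then transfers to $2^s\,\|\,{\rm ord}_p(\tfrac{a}{b})$ for all $p\mid d$ by Lemma \ref{goodP} plus the lcm computation as above. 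The converse direction uses Lemma \ref{goodP} to get goodness and the order computation to pin down the valuation. The main obstacle I anticipate is bookkeeping rather than conceptual: carefully tracking how $2^\gamma\,\|\,(a+b)$ interacts with the condition ${\rm ord}_{2^\beta}(\tfrac{a}{b})=2$ across the boundary cases $\beta\in\{0,1\}$ versus $\beta\ge 2$ (for $\beta\le 1$ the condition $2^\beta\mid(a+b)$ is automatic from $ab$ odd, so no constraint on $\gamma$ beyond $\gamma\ge\beta\ge 0$; for $\beta\ge 2$ it is a genuine restriction), and making sure the ``if and only if'' in statements $3$ and $4$ is read correctly — namely as an equivalence between the valuation condition and the disjunction of sub-cases, valid under the standing hypothesis $\ell\in G_{(a,b)}$.
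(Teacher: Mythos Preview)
Your overall plan is sound and matches the paper's intent: the paper gives no detailed proof of this corollary, merely stating that it follows from Propositions~\ref{evengood}--\ref{prop2m} and Theorem~\ref{thmChar}, and your outline fleshes out exactly those citations in a correct way (the lcm argument via Lemma~\ref{ord} to transfer the $2$-adic valuation of ${\rm ord}_d(\tfrac{a}{b})$ to the prime divisors, the use of Proposition~\ref{evengood} to translate between ${\rm ord}_{2^\beta}(\tfrac{a}{b})=2$ and $2^\beta\mid(a+b)$, and the role of Propositions~\ref{prop2d} and~\ref{prop2m} in handling the factor $2^\beta$).

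There is, however, one slip in your case analysis. You write that if $ab$ is even then ``combined with part~2 of Theorem~\ref{thmChar} we get $\ell=1$,'' and hence that for $\ell\ge 3$ necessarily $ab$ is odd. This is false: take $a=2$, $b=1$, $\ell=3$, where $3\mid 2^1+1^1$ so $\ell=3\in G_{(2,1)}$. Part~2 of Theorem~\ref{thmChar} allows $\beta=0$, $d\ge 3$. So the case $ab$ even with $\ell=d\ge 3$ odd must still be handled under statements~3 and~4. Fortunately your arguments for those statements go through verbatim in this case: when $ab$ is even one has $\gamma=0$ (since $a+b$ is odd), and goodness forces $\beta=0$, so condition~(b) in statement~3 and the condition in statement~4 both reduce to the $\beta=0$ sub-case you already treat via the lcm computation. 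Just remove the false claim and let your biconditional arguments for $3$ and $4$ run over all $\ell\ge 3$, irrespective of the parity of $ab$.
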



\section{Oddly-Good and Evenly-Good Integers}

For given   pairwise coprime  nonzero   integers $a$, $b$ and $\ell>0$,  recall that $\ell$  is said to be {\em oddly-good} if $\ell$ divides $a^k+b^k$ for some odd integer $k\geq 1$, and {\em evenly-good} if $\ell$ divides $a^k+b^k$ for some even integer $k\geq 2$. In this section, the characterizations and properties of oddly-good and evenly-good integers are determined.

We note that $1$ is always good.  Since $1| \left(a+b\right)$ and  $1|\left (a^2+b^2\right)$, we have that $1$ is both oddly-good and evenly-good.   The integer $2$ is good if and only if $ab$ is odd. In this case, $a+b$ is even and hence,  $2|(a+b)$ and $2|(a^2+b^2)$ which imply that $2$ is both oddly-good and evenly-good. However, for an  integer $\ell>2$,  $\ell$ can be either oddly-good or evenly-good, but not both. 
\begin{proposition}
    \label{disj} Let $a$, $b$ and $\ell>2$ be   pairwise coprime  nonzero integers. If   $\ell\in G_{(a,b)}$, then either $\ell \in OG_{(a,b)} $ or $\ell \in EG_{(a,b)}$, but not both. 
\end{proposition}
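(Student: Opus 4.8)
The plan is to reduce the whole statement to a single $2$-adic computation with the multiplicative order $t := \mathrm{ord}_\ell(\tfrac{a}{b})$, which is well defined since $a$, $b$, $\ell$ are pairwise coprime (this also follows from Lemma \ref{odd-good2}). The existence part of the dichotomy is immediate: if $\ell \in G_{(a,b)}$ then $\ell \mid a^k+b^k$ for some $k \ge 1$, and $k$ is either odd or even, so $\ell \in OG_{(a,b)}$ or $\ell \in EG_{(a,b)}$. Everything else amounts to showing that not both can occur.

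First I would describe precisely the set of exponents $k$ for which $\ell \mid a^k+b^k$. Putting $g := ab^{-1} \in \mathbb{Z}_\ell^\times$, the condition $\ell \mid a^k+b^k$ is equivalent to $g^k \equiv -1 \pmod \ell$. Because $\ell > 2$ we have $-1 \not\equiv 1 \pmod \ell$, so a solution $k$ forces $g^k$ to have order exactly $2$ in $\mathbb{Z}_\ell^\times$; in particular $t$ is even. Since $\langle g\rangle$ is cyclic of order $t$, it contains a unique element of order $2$, namely $g^{t/2}$, whence $-1 = g^{t/2}$. Consequently $g^k \equiv -1$ iff $g^{k-t/2}\equiv 1$ iff $k \equiv t/2 \pmod t$. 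So $\ell \mid a^k+b^k$ holds exactly when $k \equiv t/2 \pmod t$.

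Next I would read off the parity of the admissible exponents. Write $t = 2^s u$ with $u$ odd and $s \ge 1$. Any $k$ with $k \equiv t/2 \pmod t$ has the shape $k = 2^{s-1}u(1+2m)$ for some $m \in \mathbb{Z}$, so $v_2(k) = s-1$, a value independent of the particular $k$. Hence $\ell$ divides $a^k+b^k$ for some odd $k$ iff $s=1$, and for some even $k$ iff $s \ge 2$; in either case $k = t/2$ serves as a witness. Since the conditions $s=1$ and $s \ge 2$ are mutually exclusive, $\ell$ cannot be both oddly-good and evenly-good, which is the claim.

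I do not anticipate a genuine obstacle; the only step needing care is the identification $-1 = g^{t/2}$, where one must invoke $\ell > 2$ to rule out $g^k \equiv 1$ and the cyclicity of $\langle g\rangle$ to pin $-1$ into the unique slot of order $2$. After that the argument is pure bookkeeping with $2$-adic valuations, and as a by-product it yields the quantitative fact that $v_2(k) = s-1$ for every exponent $k$ realizing $\ell \mid a^k+b^k$, which will be convenient in the sequel.
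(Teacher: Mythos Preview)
Your argument is correct and cleaner than the paper's. The paper proves Proposition~\ref{disj} by a four-case split on the $2$-adic shape of $\ell$ (odd, $2^\beta$, $2d$, $2^\beta d$), invoking Propositions~\ref{evengood}--\ref{prop2m} for the even cases and, in the odd case, reducing modulo a prime $p\mid \ell$ to show that any two admissible exponents differ by a multiple of ${\rm ord}_p(\frac{a}{b})$, which is even by Lemma~\ref{goodP}. You bypass all of this by working directly in $\mathbb{Z}_\ell^\times$: once you note that $-1\in\langle g\rangle$ is the unique element of order~$2$ in that cyclic subgroup (using only $\ell>2$), the solution set $\{k: g^k\equiv -1\}$ is exactly the coset $t/2+t\mathbb{Z}$, and every element there has $2$-adic valuation $s-1$ where $2^s\|t$. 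What the paper's route buys is an explicit tie to the prime-by-prime criteria already developed (Lemma~\ref{goodP}, Propositions~\ref{evengood}--\ref{prop2m}); what yours buys is a uniform, structure-free proof that also records the sharper fact $v_2(k)=s-1$ for every admissible $k$, not merely the parity.
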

\begin{proof} We distinguish the proof into four cases. 
    \\ {\bf Case 1} $\ell$ is odd. Assume that $s$ and $t$ are positive integers such that $s\geq t$, $\ell| (a^s+b^s)$, and $\ell | (a^t+b^t)$. It is sufficient to show that $s$ and $t$ have the same parity. Let $p$ be a prime divisor of $\ell$. Then $p| (a^s+b^s)$ and $p| (a^t+b^t)$, or equivalently, $a^s\equiv -b^s \, {\rm mod}\, p $ and $a^t\equiv -b^t \, {\rm mod}\, p $.  These hold true if and only if  $(ab^{-1})^s\equiv -1 \, {\rm mod}\, p $ and $(ab^{-1})^t\equiv -1 \, {\rm mod}\, p $.
    It follows that $(ab^{-1})^s-(ab^{-1})^t\equiv 0 \, {\rm mod}\, p    $, and hence,  $(ab^{-1})^t((ab^{-1})^{s-t}-1)\equiv 0 \, {\rm mod}\, p    $.  This means $(ab^{-1})^{s-t}\equiv 1 \, {\rm mod}\, p $ which implies that   ${\rm ord}_p(   \frac{a}{b})\mid (s-t)$. Since $d$ is good, by Lemma~\ref{goodP}, ${\rm ord}_p(\frac{a}{b})$ is even which implies $s-t$ is even. Therefore, $s$ and $t$ have the same parity. 
    \\ {\bf Case 2} $\ell=2^\beta $, where $\beta \ge 2$.  Then $\ell$ is oddly-good by the proof of Proposition~\ref{evengood}. 
    \\ {\bf Case 3} $\ell=2d$, where $d$ is odd.  By Proposition \ref{prop2d}, $d\in G_{(a,b)}$.  Moreover, $d$ is either oddly-good or evenly-good by Case 1. For each positive integer $k$,   $d|(a^k+b^k) $ if and only if $2d|(a^k+b^k) $. Therefore, $2d$ is oddly (resp., evenly)  good if and only if $d$ is oddly (resp., evenly)  good. 
    \\ {\bf Case 4} $\ell=2^\beta d$, where $d$ is odd and $\beta \ge 2$.  By the proof of Proposition \ref{prop2m},  $\ell$ is oddly-good.
\end{proof}

From Proposition \ref{disj}, it follows that $G_{(a,b)}=OG_{(a,b)}\cup EG_{(a,b)}$. Moreover, we have  $OG_{(a,b)}\cap EG_{(a,b)}=\{1\}$ if $ab$ is even and  $OG_{(a,b)}\cap EG_{(a,b)}=\{1,2\}$ if $ab$ is odd.  Properties of   $OG_{(a,b)}$ and $EG_{(a,b)}$
are determined in the next subsections.





%

\subsection{Oddly-Good Integers}
In this subsection,  we focus on the characterization and properties of oddly-good integers.

The following lemma from \cite{M1997} is useful.
\begin{lemma}
    [{\cite[Lemma 1]{M1997}}]\label{congr} Let $a_1,a_2,\dots,a_t$ be positive integers. Then the system of congruences 
    \begin{align*}
    x\equiv a_1\,({\rm mod}\, 2a_1), \quad x\equiv a_2\,({\rm mod}\, 2a_2), \quad \dots, \quad x\equiv a_t\,({\rm mod}\, 2a_t) 
    \end{align*}
    has a solution $x$ if and only if there exists $s\geq 0$ such that $2^s|| a_i$ for all $i=1,2,\dots, t$. 
\end{lemma}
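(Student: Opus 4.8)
The plan is to reduce the claim to an elementary observation about odd multiples. I would write each $a_i=2^{s_i}m_i$ with $m_i$ odd; then $2^s||a_i$ just means $s_i=s$, so the condition ``there exists $s\ge 0$ with $2^s||a_i$ for all $i$'' is precisely ``$s_1=s_2=\cdots=s_t$''. The key point I would isolate first is that, for an integer $x$, the congruence $x\equiv a_i\,({\rm mod}\,2a_i)$ holds if and only if $x$ is an odd multiple of $a_i$: indeed $x\equiv a_i\,({\rm mod}\,2a_i)$ says $x=a_i+2a_ik=a_i(1+2k)$ for some integer $k$, and conversely an odd multiple $x=a_iu$ has $u=1+2k$ for some $k$, giving $x\equiv a_i\,({\rm mod}\,2a_i)$.

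First I would prove necessity. Suppose $x$ solves the whole system. By the observation, $x$ is an odd multiple of each $a_i$, so the exact power of $2$ dividing $x$ equals the exact power of $2$ dividing $a_i$; that is, $2^{s_i}||x$ for every $i$. Since $x$ is a single fixed integer, all the exponents $s_i$ must coincide, which is exactly the asserted condition.

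Then I would handle sufficiency. Assuming $s_1=\cdots=s_t=s$, so $a_i=2^sm_i$ with $m_i$ odd, I would set $M={\rm lcm}(m_1,\dots,m_t)$, which is odd because each $m_i$ is, and take $x=2^sM$. For each $i$ we have $m_i\mid M$, so $x/a_i=M/m_i$ is an integer, and it is odd since $M$ is odd. Hence $x$ is an odd multiple of $a_i$, and by the observation $x\equiv a_i\,({\rm mod}\,2a_i)$; as $i$ is arbitrary, $x$ solves the system.

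I do not expect a genuine obstacle here: once one notices that $x\equiv a_i\,({\rm mod}\,2a_i)$ is equivalent to ``$x$ is an odd multiple of $a_i$'', necessity is a one-line statement about the $2$-part of $x$, and sufficiency follows from the explicit witness $x=2^s\,{\rm lcm}(m_1,\dots,m_t)$. (An alternative would be induction on $t$, merging the congruences two at a time via the solvability criterion for simultaneous congruences with non-coprime moduli, but that forces one to track $\gcd$'s and ${\rm lcm}$'s of the $2a_i$ and is messier than the direct construction.)
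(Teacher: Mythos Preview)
Your proof is correct. The paper does not supply its own proof of this lemma: it is quoted verbatim from \cite[Lemma~1]{M1997} and used as a black box, so there is nothing in the paper to compare your argument against. Your reduction to the observation that $x\equiv a_i\,({\rm mod}\,2a_i)$ is equivalent to ``$x$ is an odd multiple of $a_i$'' is clean, and both directions follow immediately from it as you describe.
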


\begin{proposition}
    \label{odd-good} Let $a$ and $b$ be coprime nonzero integers and let $d>1$ be an  odd  integer. Then   $d\in OG_{(a,b)}$   if and only if $2|| {\rm ord}_{p}(\frac{a}{b})$ for every prime $p$ dividing $d$.  
\end{proposition}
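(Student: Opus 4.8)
The plan is to reduce the characterization of oddly-good odd integers to the already-established characterization of good odd integers (Lemma \ref{goodP}) together with a congruence-solvability argument (Lemma \ref{congr}) that forces a \emph{common odd} exponent $k$ to exist. First I would observe that one direction is nearly immediate: if $d$ is oddly-good, then $d \in G_{(a,b)}$, and moreover $d \mid (a^k+b^k)$ for some odd $k$. Then for each prime $p \mid d$ we have $(ab^{-1})^k \equiv -1 \pmod p$, so $\mathrm{ord}_p(\frac{a}{b})$ divides $2k$ but not $k$; since $k$ is odd, writing $\mathrm{ord}_p(\frac{a}{b}) = 2^s m$ with $m$ odd, the divisibility $2^s m \mid 2k$ and non-divisibility $2^s m \nmid k$ together force $s = 1$. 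Hence $2 \,\|\, \mathrm{ord}_p(\frac{a}{b})$ for every prime $p \mid d$.

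For the converse, suppose $2 \,\|\, \mathrm{ord}_p(\frac{a}{b})$ for every prime $p \mid d$. Write $d = p_1^{r_1}\cdots p_t^{r_t}$. By Lemma \ref{goodP} (taking $s=1$), $d \in G_{(a,b)}$, so each $p_i^{r_i}$ is good; by Lemma \ref{2order}, $\mathrm{ord}_{p_i^{r_i}}(\frac{a}{b}) = 2 s_i$ where $s_i$ is the least positive integer with $(ab^{-1})^{s_i} \equiv -1 \pmod{p_i^{r_i}}$. By Lemma \ref{ord}, $\mathrm{ord}_{p_i^{r_i}}(\frac{a}{b}) = \mathrm{ord}_{p_i}(\frac{a}{b}) \cdot p_i^{j_i}$ for some $j_i \ge 0$; since $2 \,\|\, \mathrm{ord}_{p_i}(\frac{a}{b})$ and $p_i$ is odd, we get $2 \,\|\, \mathrm{ord}_{p_i^{r_i}}(\frac{a}{b}) = 2 s_i$, i.e. $s_i$ is odd. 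Now I want a single odd $k$ with $p_i^{r_i} \mid (a^k + b^k)$ for all $i$, equivalently $(ab^{-1})^k \equiv -1 \pmod{p_i^{r_i}}$ for all $i$. Since $(ab^{-1})^{s_i} \equiv -1$ and $(ab^{-1})$ has order $2s_i$ modulo $p_i^{r_i}$, the solutions of $(ab^{-1})^x \equiv -1 \pmod{p_i^{r_i}}$ are exactly $x \equiv s_i \pmod{2 s_i}$. So I need the system $x \equiv s_i \pmod{2 s_i}$, $i = 1,\dots,t$, to have a solution, which by Lemma \ref{congr} holds precisely because all the $s_i$ are odd (the common value $s = 0$ in that lemma works). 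Any such solution $x = k$ is odd (as $k \equiv s_1 \equiv 1 \pmod 2$), and then $d \mid (a^k + b^k)$ with $k$ odd, so $d \in OG_{(a,b)}$.

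The main obstacle — really the only non-routine point — is the passage from "prime-power congruences individually solvable with odd exponent" to "a common odd exponent for all of them simultaneously." This is exactly what Lemma \ref{congr} is designed to handle: the uniform $2$-adic valuation condition $2 \,\|\, s_i$ for all $i$ is both necessary and sufficient for the simultaneous system to be consistent, and it is the condition $2\,\|\,\mathrm{ord}_p(\frac{a}{b})$ that delivers this. I would take care to state clearly why the solution set of $(ab^{-1})^x \equiv -1 \pmod{p_i^{r_i}}$ is the arithmetic progression $s_i + 2 s_i \mathbb{Z}$ (this uses that $ab^{-1}$ has even order $2s_i$ and $-1$ has order $2$ in the cyclic group generated by $ab^{-1}$), since that is what makes Lemma \ref{congr} directly applicable. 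Everything else is bookkeeping with Lemmas \ref{2order}, \ref{ord}, and \ref{goodP}.
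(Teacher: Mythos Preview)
Your proof is correct and follows essentially the same approach as the paper: for sufficiency, both you and the paper lift the condition $2\,\|\,\mathrm{ord}_p(\tfrac{a}{b})$ to prime powers via Lemma~\ref{ord}, identify the solution set of $(ab^{-1})^x\equiv -1\pmod{p_i^{r_i}}$ as the progression $x\equiv s_i\pmod{2s_i}$, and invoke Lemma~\ref{congr} to produce a common odd exponent. Your necessity direction is slightly more streamlined than the paper's---you work directly modulo the prime $p$ with a short $2$-adic valuation argument, whereas the paper passes through the prime-power level using Lemma~\ref{2order} before descending via Lemma~\ref{ord}---but this is a cosmetic difference, not a genuinely different route.
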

\begin{proof}
    Let $p_1,p_2,\dots,p_t$ be the distinct prime divisors of $d$. For each $1\leq i\leq t$, let $r_i$ be the positive integer such that $p_i^{r_i}|| d$. To prove the necessity, assume that $d\in OG_{(a,b)}$. There exists an odd positive integer $c$ such that $(\frac{a}{b})^c\equiv -1 \,({\rm mod}\, d)$ and, hence, $(\frac{a}{b})^c\equiv -1 \,({\rm mod}\, p_i^{r_i})$ for all $i=1,2,\dots,t$. By Lemma~\ref{2order}, it follows that ${\rm ord}_{p_i^{r_i}}(\frac{a}{b})$ is even and
    $ c\equiv \frac{{\rm ord}_{p_i^{r_i}}(\frac{a}{b})}{2} \,({\rm mod}\, {\rm ord}_{p_i^{r_i}}(\frac{a}{b})) $
    for all $i=1,2,\dots,t$. Since $c$ is an odd integer, $\frac{{\rm ord}_{p_i^{r_i}}(\frac{a}{b})}{2}$ is odd for all $i=1,2,\dots,t$. Therefore, by Lemma~\ref{ord}, $2|| {\rm ord}_{p}(\frac{a}{b})$ for every prime $p$ dividing $d$.
    
    Conversely, assume that $2|| {\rm ord}_{p}(\frac{a}{b})$ for every prime $p$ dividing $d$. Then, by Lemma~\ref{ord}, we have $2|| {\rm ord}_{p_i^{r_i}}(\frac{a}{b})$ for all $i=1,2,\dots,t$. By Lemma~\ref{congr}, there exists an integer $c$ such that
    $c\equiv \frac{{\rm ord}_{p_i^{r_i}}(\frac{a}{b})}{2} \,({\rm mod}\, {\rm ord}_{p_i^{r_i}}(\frac{a}{b})) $
    for all $i=1,2,\dots,t$. Since $\frac{{\rm ord}_{p_i^{r_i}}(\frac{a}{b})}{2}$ is odd, $c$ is an odd integer. Thus $(\frac{a}{b})^c\equiv -1 \,({\rm mod}\, p_i^{r_i})$ for all $i=1,2,\dots,t$, and hence, $(\frac{a}{b})^c\equiv -1 \,({\rm mod}\, d)$. Therefore, $d$ is oddly-good as desired.
\end{proof}

From Proposition \ref{odd-good},  we have the following characterization.
\begin{corollary}
    \label{cor-oddly} Let $a$ and $b$ be coprime nonzero integers and let $d>1$ be an  odd  integer. Then the following statements are equivalent. 
    \begin{enumerate} 
        \item $d\in OG_{(a,b)}$. 
        \item $j\in OG_{(a,b)}$ for all divisors $j$ of $d$. 
        \item $p\in OG_{(a,b)}$  for all prime divisors $p$ of $d$. 
    \end{enumerate}
\end{corollary}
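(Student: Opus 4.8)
The plan is to prove Corollary~\ref{cor-oddly} as a direct consequence of Proposition~\ref{odd-good}, by showing that each of the three conditions is equivalent to the numeric criterion ``$2\,||\,{\rm ord}_p(\frac{a}{b})$ for every prime $p$ dividing $d$.'' The key observation is that this criterion is a statement quantified over the prime divisors of $d$, so it behaves well under passing to divisors.

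First I would establish $(1)\Leftrightarrow(3)$. By Proposition~\ref{odd-good}, $d\in OG_{(a,b)}$ iff $2\,||\,{\rm ord}_p(\frac{a}{b})$ for every prime $p\mid d$. Applying Proposition~\ref{odd-good} again with $d$ replaced by a prime divisor $p$ of $d$ (noting $p>1$ is odd and coprime to $a,b$, the latter being forced whenever the relevant order is to make sense — or one may simply note that if $\gcd(p,ab)\neq 1$ then $p\notin OG_{(a,b)}\subseteq G_{(a,b)}$ by Lemma~\ref{odd-good2}, so the claimed equivalence still holds), we get $p\in OG_{(a,b)}$ iff $2\,||\,{\rm ord}_p(\frac{a}{b})$. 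Hence $d\in OG_{(a,b)}$ iff $p\in OG_{(a,b)}$ for every prime $p\mid d$, which is exactly $(1)\Leftrightarrow(3)$.

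Next I would handle $(1)\Leftrightarrow(2)$, or more efficiently close the cycle $(2)\Rightarrow(3)$ and $(3)\Rightarrow(2)$. For $(1)\Rightarrow(2)$: if $d\in OG_{(a,b)}$ and $j\mid d$ with $j>1$, then every prime dividing $j$ also divides $d$, so $2\,||\,{\rm ord}_p(\frac{a}{b})$ holds for every prime $p\mid j$ by Proposition~\ref{odd-good} applied to $d$; applying Proposition~\ref{odd-good} in the reverse direction to $j$ gives $j\in OG_{(a,b)}$ (and the case $j=1$ is trivial since $1\in OG_{(a,b)}$ always). For $(2)\Rightarrow(3)$ it suffices to take $j=p$ ranging over the prime divisors of $d$, which are a subset of the divisors of $d$. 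Together with $(3)\Rightarrow(1)$ from the previous paragraph this closes the equivalence.

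I do not anticipate a serious obstacle here; the content is entirely carried by Proposition~\ref{odd-good}, and the corollary is essentially a repackaging of that criterion together with the elementary fact that ``$p\mid j$ and $j\mid d$'' implies ``$p\mid d$.'' The only point requiring a little care is to make sure Proposition~\ref{odd-good} is legitimately applicable to the smaller moduli $j$ and $p$ — i.e.\ that they are odd integers greater than $1$ and coprime to $a$ and $b$ — and to dispose of the trivial divisor $j=1$ separately, since Proposition~\ref{odd-good} is stated only for $d>1$.
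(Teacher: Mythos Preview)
Your proof is correct and follows essentially the same approach as the paper: both rely on Proposition~\ref{odd-good} for the nontrivial implication $(3)\Rightarrow(1)$, with the remaining implications being elementary. The only cosmetic difference is that the paper treats $(1)\Rightarrow(2)$ as obvious directly from the definition (if $d\mid a^k+b^k$ with $k$ odd and $j\mid d$, then $j\mid a^k+b^k$ for the same odd $k$), whereas you route it through the numeric criterion of Proposition~\ref{odd-good}; either way works.
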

\begin{proof}
    The statements $(1)\Rightarrow (2)$ and  $(2)\Rightarrow (3)$ are obvious. By Proposition~\ref{odd-good},  the statement $(3) \Rightarrow (1)$ follows.
\end{proof}

From the proof of Propositions \ref{prop2d}, \ref{prop2m} and  \ref{disj},  we have the following characterizations.
\begin{corollary}
    \label{odd-good2} Let $a$ and $b$ be coprime nonzero integers and let $d>1$ be an  odd  integer.  
    \begin{enumerate}
        \item  The following statements are equivalent.
        \begin{enumerate}
            \item $d\in OG_{(a,b)}$.
            \item       $2d\in OG_{(a,b)}$. 
        \end{enumerate} 
        \item  For each $\beta \ge 2$,  $2^\beta d\in OG_{(a,b)}$ if and only if $2^\beta d\in G_{(a,b)}$.
    \end{enumerate}    
\end{corollary}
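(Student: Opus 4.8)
The plan is to deduce both parts of Corollary~\ref{odd-good2} directly from the proofs of Propositions~\ref{prop2d} and~\ref{prop2m} together with Proposition~\ref{disj}, so that essentially no new work is required beyond assembling these facts. For part~(1), the key observation is already contained in the proof of Proposition~\ref{prop2d}: since $d$ is odd and $a^k+b^k$ is even for every positive integer $k$, we have $d\mid(a^k+b^k)$ if and only if $2d\mid(a^k+b^k)$. In particular this equivalence holds when $k$ is restricted to odd values, so $d$ divides $a^k+b^k$ for some odd $k$ exactly when $2d$ does. Hence $d\in OG_{(a,b)}$ if and only if $2d\in OG_{(a,b)}$, which is the assertion $(a)\Leftrightarrow(b)$.

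For part~(2), let $\beta\ge2$. The direction $2^\beta d\in OG_{(a,b)}\Rightarrow 2^\beta d\in G_{(a,b)}$ is trivial from the definitions, since oddly-good integers are good. For the converse, suppose $2^\beta d\in G_{(a,b)}$. Then there is a smallest positive integer $k$ with $2^\beta d\mid(a^k+b^k)$; in particular $4\mid(a^k+b^k)$, and the parity argument in the proof of Proposition~\ref{evengood} (if $k$ were even then $a^k\equiv b^k\equiv1\pmod 4$, forcing $a^k+b^k\equiv2\pmod4$, a contradiction) shows $k$ must be odd. Therefore $2^\beta d$ divides $a^k+b^k$ for an odd $k$, which means $2^\beta d\in OG_{(a,b)}$. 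This is exactly the stated equivalence. Alternatively, one may cite Proposition~\ref{disj}: for $\ell=2^\beta d$ with $\beta\ge2$, Case~2 and Case~4 of its proof establish that whenever $\ell\in G_{(a,b)}$ then $\ell\in OG_{(a,b)}$, so $G_{(a,b)}$ and $OG_{(a,b)}$ coincide on such~$\ell$.

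I do not anticipate a genuine obstacle here; the corollary is a repackaging of intermediate steps already carried out in the earlier propositions. The only point requiring a small amount of care is to make the quantifier over $k$ (ranging only over odd integers) explicit when invoking the equivalence $d\mid(a^k+b^k)\Leftrightarrow 2d\mid(a^k+b^k)$ from the proof of Proposition~\ref{prop2d}, since that proof states it for all $k$ but we need it with the odd restriction preserved. Since the biconditional holds for each individual $k$, restricting to odd $k$ is immediate, so even this is routine.

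Thus the proof would read, in brief: \emph{(1)} follows because $d\mid(a^k+b^k)\iff 2d\mid(a^k+b^k)$ for every (in particular every odd) positive integer $k$, as in the proof of Proposition~\ref{prop2d}; and \emph{(2)} follows because, when $2^\beta d\in G_{(a,b)}$ with $\beta\ge2$, the smallest $k$ with $2^\beta d\mid(a^k+b^k)$ is odd by the parity argument in the proof of Proposition~\ref{evengood} (equivalently, by Cases~2 and~4 of the proof of Proposition~\ref{disj}), whence $2^\beta d\in OG_{(a,b)}$, while the reverse implication is immediate.
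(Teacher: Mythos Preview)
Your proposal is correct and follows essentially the same route as the paper, which simply states that the corollary follows ``from the proof of Propositions~\ref{prop2d}, \ref{prop2m} and~\ref{disj}.'' You have merely unpacked those citations in more detail (tracing the parity argument back to Proposition~\ref{evengood} rather than stopping at Proposition~\ref{prop2m}, whose proof itself invokes Proposition~\ref{evengood}); the logical content is the same.
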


The characterization  of oddly-good integers can be  summarized in the following theorem.
\begin{theorem} \label{oddlygood}Let $a$ and $b$     be   coprime  nonzero integers  and  let $\ell =2^\beta d$ be  an  integer such that $d$ is odd and $\beta \ge 0$. Then  one of the following statements holds.
    \begin{enumerate}
        \item  If  $ab$ is odd, then $\ell=2^\beta d\in OG_{(a,b)}$  if and only if  one of the following statements holds.
        \begin{enumerate}
            \item $\beta \in \{0,1\}$ and  $d=1$.
            \item
            $\beta \in \{0,1\}$, $d\geq 3$,   and  $2|| {\rm ord}_{p}(\frac{a}{b})$ for every prime $p$ dividing $d$. 
         \item {$\beta \ge 2$, $d=1$  and    $ 2^\beta |(a+b)$.}
         \item $\beta \ge  2$, $d\geq 3$,     $ 2^\beta |(a+b)$  and  $ d\in G_{(a,b)}$ is such that $2|| {\rm ord}_{d}(\frac{a}{b})$.   
        \end{enumerate}
        \item  If $ab$ is even,  then $\ell=2^\beta d\in OG_{(a,b)}$  if and only if    one of the following statements holds.
        \begin{enumerate}
            \item  $\beta =0$   and $d=1$.
            \item 
            $\beta =0$, $d\geq 3$,   and  $2|| {\rm ord}_{p}(\frac{a}{b})$ for every prime $p$ dividing $d$. 
        \end{enumerate}
    \end{enumerate}
    
\end{theorem}
Note that the condition  ${\rm ord}_{2^\beta }(\frac{a}{b})=2$  in  Theorem \ref{oddlygood} is equivalent to $2^\beta|(a+b)$ by Proposition \ref{evengood}.

The following necessary conditions for a  positive integer to be oddly-good can be concluded from Proposition~\ref{odd-good} and  Theorem \ref{oddlygood}. 
\begin{corollary}
    Let $a$ and $b$    be   coprime  nonzero  integers and let $\ell=2^\beta d$ be a positive integer such that $d$ is odd and $\beta \ge 0$. Let $\gamma\geq 0$ be an integer such that $2^\gamma||(a+b)$.  If  $\ell\in OG_{(a,b)}$, then one of the following statements holds.
    \begin{enumerate}
        \item {$\ell=1$ and  $ab$ is even.}
        \item {$\ell\in \{1,2\}$ and  $ab$ is odd. }
        \item $2|| {\rm ord}_{\ell}(\frac{a}{b})$ if and only if  one of the following statements holds.
        \begin{enumerate}
            \item $d=1$ and $2\leq \beta \le \gamma$.
            \item $d\geq 3$, $2|| {\rm ord}_{p}(\frac{a}{b})$ for every prime $p$ dividing $d$, and $0\leq \beta  \leq \gamma$. 
        \end{enumerate}
    \end{enumerate}
\end{corollary}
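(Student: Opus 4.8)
The plan is to repackage Theorem~\ref{oddlygood}, guided by one elementary computation of the $2$-adic valuation of ${\rm ord}_\ell(\frac{a}{b})$. First I would record the relation between $\gamma$ and the parity of $ab$: since $\gcd(a,b)=1$, if $ab$ is even then $a+b$ is odd, so $\gamma=0$, whereas if $ab$ is odd then $a+b$ is even, so $\gamma\geq 1$. I would also note that, by Proposition~\ref{evengood}, for $\beta\geq 2$ the condition ${\rm ord}_{2^\beta}(\frac{a}{b})=2$ occurring in Theorem~\ref{oddlygood} is the same as $2^\beta\mid(a+b)$, i.e.\ $\beta\leq\gamma$.

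The crux is the following observation: \emph{if $\ell\in OG_{(a,b)}$, then $2||{\rm ord}_\ell(\frac{a}{b})$ if and only if $\ell\geq 3$.} To prove it, pick an odd $k\geq 1$ with $\ell\mid a^k+b^k$; since $\ell\in G_{(a,b)}$ we have $\gcd(a,\ell)=\gcd(b,\ell)=1$, so $(ab^{-1})^k\equiv -1\pmod{\ell}$. If $\ell\geq 3$, then $-1\not\equiv 1\pmod{\ell}$, so ${\rm ord}_\ell(\frac{a}{b})$ divides $2k$ but not $k$; hence ${\rm ord}_\ell(\frac{a}{b})=2c$ with $c\mid k$, so $c$ is odd and $2||{\rm ord}_\ell(\frac{a}{b})$. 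If $\ell\in\{1,2\}$, then $a$ and $b$ are odd (again because $\ell$ is good), so $ab^{-1}\equiv 1\pmod{\ell}$ and ${\rm ord}_\ell(\frac{a}{b})=1$, which is not divisible by $2$ to exactly the first power. Two consequences follow at once. First, if $\ell\leq 2$: when $ab$ is even we must have $\ell=1$ because $2\notin G_{(a,b)}$, which is statement~$1$; when $ab$ is odd we have $\ell\in\{1,2\}$, which is statement~$2$. Second, under the standing hypothesis $\ell\in OG_{(a,b)}$, the biconditional in statement~$3$ is equivalent to: $\ell\geq 3$ holds if and only if $(a)$ or $(b)$ holds.

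It remains to verify this last assertion. The implication $\Leftarrow$ is immediate, since $(a)$ gives $\ell=2^\beta\geq 4$ and $(b)$ gives $\ell=2^\beta d\geq d\geq 3$. For $\Rightarrow$, suppose $\ell\geq 3$ and work through Theorem~\ref{oddlygood}. Since $\ell\geq 3$, we are not in a case with $d=1$ and $\beta\leq 1$, so $\ell$ falls under one of the cases $1(b)$, $1(c)$, $1(d)$, $2(b)$ there. Cases $1(b)$ and $2(b)$ are exactly condition $(b)$ with $\beta\in\{0,1\}$, and $\beta\leq\gamma$ holds because $\beta=1$ forces $ab$ odd, hence $\gamma\geq 1$, while $\beta=0\leq\gamma$ always. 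Case $1(c)$ is condition $(a)$, its hypothesis ${\rm ord}_{2^\beta}(\frac{a}{b})=2$ amounting to $\beta\leq\gamma$ by Proposition~\ref{evengood}. Case $1(d)$ is condition $(b)$ with $\beta\geq 2$: there $\beta\leq\gamma$ as in case $1(c)$, and since $d$ divides $\ell\in OG_{(a,b)}$ it is itself oddly-good, so Proposition~\ref{odd-good} gives $2||{\rm ord}_p(\frac{a}{b})$ for every prime $p\mid d$.

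Nothing deeper than Theorem~\ref{oddlygood} and Propositions~\ref{evengood},~\ref{odd-good} enters. The one mildly delicate point is the bookkeeping in the final case analysis: one has to check in each of the four cases that $\beta\leq\gamma$ is genuinely forced (always through Proposition~\ref{evengood}), and in case $1(d)$ use the trivial fact that a divisor of an oddly-good integer is oddly-good in order to apply Proposition~\ref{odd-good} to $d$.
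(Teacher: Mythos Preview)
Your proof is correct and aligns with the paper's approach: the paper merely states that the corollary ``can be concluded from Proposition~\ref{odd-good} and Theorem~\ref{oddlygood}'' without supplying details, and you fill these in accurately. Your direct verification that $\ell\in OG_{(a,b)}$ with $\ell\geq 3$ forces $2\,\|\,{\rm ord}_\ell(\frac{a}{b})$ is a clean shortcut; the paper's implicit route would be to extract this from the order computations embedded in Propositions~\ref{prop2d} and~\ref{prop2m} (via Corollary~\ref{odd-good2}), but your unified argument from $(ab^{-1})^k\equiv -1\pmod{\ell}$ with $k$ odd is more direct and avoids splitting into cases on $\beta$.
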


\subsection{Evenly-Good Integers}
In this subsection,  we focus on properties of evenly-good integers. From Proposition \ref{disj},  an integer greater than $2$ can be either oddly-good or evenly-good. Then, by Lemma \ref{goodP} and Proposition \ref{odd-good}, we have the following characterization of evenly-good integers.

\begin{proposition}
    \label{odd-even} Let $a$ and $b$ be coprime nonzero  integers and let $d>1$ be an  odd  integer. Then  
    $d\in EG_{(a,b)}$ is    if and only if there exists $s\geq 2$ such that $2^s|| {\rm ord}_{p}(\frac{a}{b})$ for every prime $p$ dividing $d$.  
\end{proposition}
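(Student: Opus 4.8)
The plan is to deduce the statement from the two earlier characterizations of good and of oddly‑good odd integers by a simple complement argument. Since $d>1$ is odd we have $d\geq 3>2$, so the discussion following Proposition~\ref{disj} applies: $G_{(a,b)}=OG_{(a,b)}\cup EG_{(a,b)}$, and no integer exceeding $2$ lies in both $OG_{(a,b)}$ and $EG_{(a,b)}$. Consequently, for such $d$, the membership $d\in EG_{(a,b)}$ is equivalent to the conjunction ``$d\in G_{(a,b)}$ and $d\notin OG_{(a,b)}$''. I would open the proof by recording this reduction, and then translate each of the two conditions into a divisibility statement about $2$‑adic valuations of ${\rm ord}_p(\frac{a}{b})$.

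For the forward implication, suppose $d\in EG_{(a,b)}$. Then $d\in G_{(a,b)}$, so Lemma~\ref{goodP} supplies a single $s\geq 1$ with $2^s\|{\rm ord}_p(\frac{a}{b})$ for every prime $p$ dividing $d$. If this $s$ equalled $1$, then $2\|{\rm ord}_p(\frac{a}{b})$ for every prime $p\mid d$, and Proposition~\ref{odd-good} would give $d\in OG_{(a,b)}$, contradicting the disjointness from Proposition~\ref{disj} (valid since $d>2$). Hence $s\geq 2$, which is precisely the asserted condition.

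For the converse, assume there is an $s\geq 2$ with $2^s\|{\rm ord}_p(\frac{a}{b})$ for every prime $p$ dividing $d$. Lemma~\ref{goodP} then yields $d\in G_{(a,b)}$ directly. At the same time, $2^s\|{\rm ord}_p(\frac{a}{b})$ with $s\geq 2$ forces $4\mid{\rm ord}_p(\frac{a}{b})$, so the condition $2\|{\rm ord}_p(\frac{a}{b})$ of Proposition~\ref{odd-good} fails for every $p\mid d$; therefore $d\notin OG_{(a,b)}$. Combining $d\in G_{(a,b)}=OG_{(a,b)}\cup EG_{(a,b)}$ with $d\notin OG_{(a,b)}$ gives $d\in EG_{(a,b)}$, completing the proof.

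There is no substantial obstacle in this argument; the only two points that need care are (i) the legitimacy of invoking Proposition~\ref{disj}, which requires $d>2$ — this is exactly where the hypothesis ``$d>1$ odd'' is used — and (ii) the observation that the exponent $s$ furnished by Lemma~\ref{goodP} is common to all primes dividing $d$, so that one and the same value of $s$ simultaneously controls the good and the (not) oddly‑good conditions. Once these are noted, the equivalence is immediate.
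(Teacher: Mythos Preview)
Your proof is correct and follows exactly the approach the paper itself indicates: the paper states just before Proposition~\ref{odd-even} that the characterization follows from Proposition~\ref{disj}, Lemma~\ref{goodP}, and Proposition~\ref{odd-good}, and you have spelled out precisely this complement argument in full detail.
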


\begin{corollary}
    \label{cor-evenly} Let $a$ and $b$ be coprime nonzero integers and let $d>1$ be an  odd  integer. If $d\in EG_{(a,b)}$   (resp. $d\in G_{(a,b)}$), then $j\in EG_{(a,b)}$   (resp. $j\in G_{(a,b)}$)  for all divisors $j$ of $d$. 
    
\end{corollary}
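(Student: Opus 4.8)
The plan is to reduce both assertions to the prime-divisor characterizations already established, namely Lemma~\ref{goodP} for the ``good'' case and Proposition~\ref{odd-even} for the ``evenly-good'' case, and then to observe that these characterizations are automatically inherited by divisors. Since $d$ is odd, every divisor $j$ of $d$ is odd as well, so these two results apply verbatim to $j$ (when $j>1$), and the borderline case $j=1$ will be handled directly.

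First I would dispose of $j=1$: the integer $1$ divides both $a+b$ and $a^2+b^2$, hence $1\in OG_{(a,b)}\cap EG_{(a,b)}\subseteq G_{(a,b)}$, so there is nothing to prove for this divisor in either statement. From now on assume $1<j\mid d$.

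For the ``good'' statement, suppose $d\in G_{(a,b)}$. Since $d>1$ is odd, Lemma~\ref{goodP} gives an integer $s\geq 1$ such that $2^s\,\|\,{\rm ord}_p(\tfrac{a}{b})$ for every prime $p$ dividing $d$. Now if $1<j\mid d$, then every prime dividing $j$ also divides $d$, so the very same $s$ witnesses $2^s\,\|\,{\rm ord}_p(\tfrac{a}{b})$ for every prime $p$ dividing $j$; as $j$ is odd and $j>1$, Lemma~\ref{goodP} (applied in the reverse direction) yields $j\in G_{(a,b)}$. For the ``evenly-good'' statement the argument is identical but uses Proposition~\ref{odd-even}: from $d\in EG_{(a,b)}$ we obtain $s\geq 2$ with $2^s\,\|\,{\rm ord}_p(\tfrac{a}{b})$ for every prime $p\mid d$, this condition passes to the smaller prime set of any divisor $j$ with $1<j\mid d$, and Proposition~\ref{odd-even} then gives $j\in EG_{(a,b)}$. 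Combining this with the case $j=1$ completes both proofs.

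There is no real obstacle here: the entire content is that the conditions ``$2^s\,\|\,{\rm ord}_p(\tfrac{a}{b})$ for every prime $p\mid d$'' are monotone with respect to the divisibility partial order on odd integers, together with the trivial observation about $j=1$. The only point requiring a moment's care is to use a \emph{single uniform} exponent $s$ across all relevant primes, which is exactly the form in which Lemma~\ref{goodP} and Proposition~\ref{odd-even} are stated, so no extra work is needed.
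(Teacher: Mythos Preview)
Your proof is correct. The paper states this corollary without proof, placing it immediately after Proposition~\ref{odd-even}, so your argument via the prime-divisor characterizations (Lemma~\ref{goodP} and Proposition~\ref{odd-even}) is exactly the inference the reader is meant to draw, and you have carried it out cleanly, including the trivial case $j=1$.

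It is worth noting, however, that this particular corollary admits an even more elementary proof directly from the definitions, bypassing the characterizations entirely: if $d\mid(a^{k}+b^{k})$ for some $k\geq 1$ (respectively, some even $k\geq 2$) and $j\mid d$, then $j\mid(a^{k}+b^{k})$ for the \emph{same} $k$, so $j\in G_{(a,b)}$ (respectively, $j\in EG_{(a,b)}$). This one-line argument works uniformly for all divisors $j$ (including $j=1$) and does not need the odd hypothesis on $d$ at all. Your route through the structural characterizations is not wrong, and it mirrors the proof of the analogous Corollary~\ref{cor-oddly}; but unlike that corollary, which asserts an \emph{equivalence} and therefore genuinely requires the characterization, the present statement is only a one-way implication and follows trivially from transitivity of divisibility.
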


The above properties of evenly-good integers can be summarized as follows.

\begin{theorem}\label{thm-even} Let $a$ and $b$     be   coprime   nonzero  integers  and  let $\ell =2^\beta d$ be  an  integer such that $d$ is odd and $\beta \ge 0$. Then   one of the following statements holds.
    \begin{enumerate}
        \item  If  $ab$ is odd, then $\ell=2^\beta d\in EG_{(a,b)}$  if and only if  one of the following statements holds.
        \begin{enumerate}
            \item $\beta \in \{0,1\}$ and  $d=1$.
            \item $\beta \in \{0,1\}$, $d\geq 3$,  and    there exists $s\geq 2$ such that $2^s|| {\rm ord}_{p}(\frac{a}{b})$ for every prime $p$ dividing $d$.  
        \end{enumerate}
        \item  If $ab$ is even,  then $\ell=2^\beta d\in EG_{(a,b)}$  if and only if    one of the following statements holds.
        \begin{enumerate}
            \item  $\beta =0$ and  $d=1$.
            \item  $\beta =0$, $d\geq 3$,    and 
            there exists $s\geq 2$ such that $2^s|| {\rm ord}_{p}(\frac{a}{b})$ for every prime $p$ dividing $d$.  
        \end{enumerate}
    \end{enumerate}
    
\end{theorem}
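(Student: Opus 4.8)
The plan is to reduce Theorem~\ref{thm-even} to the already-established results by peeling off the power of $2$ and reducing to the odd part $d$, exactly in the spirit of the proofs of Propositions~\ref{prop2d}, \ref{prop2m} and~\ref{disj}. First I would dispose of the small cases: by the discussion immediately preceding Proposition~\ref{disj}, the integers $1$ and $2$ are both oddly-good and evenly-good whenever they are good, so the cases $\ell=1$ (item~1(a) and item~2(a)) and, when $ab$ is odd, $\ell=2$ are immediate; note $\ell=2$ is excluded from the $ab$ even branch since $2\notin G_{(a,b)}$ there. This handles every clause with $d=1$, as well as pinning down when $\beta\le 1$ is permissible.

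Next I would treat the generic case $d\ge 3$. Here the key observation is Proposition~\ref{disj}: any $\ell\in G_{(a,b)}$ with $\ell>2$ lies in exactly one of $OG_{(a,b)}$ and $EG_{(a,b)}$. Therefore $\ell\in EG_{(a,b)}$ if and only if $\ell\in G_{(a,b)}$ and $\ell\notin OG_{(a,b)}$. I would now combine the characterization of good integers (Theorem~\ref{thmChar}) with the characterization of oddly-good integers (Theorem~\ref{oddlygood}), comparing the two clause-by-clause. In the $ab$ odd branch with $\beta\in\{0,1\}$: Theorem~\ref{thmChar}~1(b) says $\ell\in G_{(a,b)}$ iff there is $s\ge 1$ with $2^s\|{\rm ord}_p(\tfrac ab)$ for every prime $p\mid d$, while Theorem~\ref{oddlygood}~1(b) says $\ell\in OG_{(a,b)}$ iff that common $s$ equals $1$; subtracting gives precisely ``there exists $s\ge 2$ with $2^s\|{\rm ord}_p(\tfrac ab)$ for every prime $p\mid d$,'' which is item~1(b). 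For $\beta\ge 2$ with $ab$ odd, Theorem~\ref{thmChar}~1(d) and Theorem~\ref{oddlygood}~1(d) have identical hypotheses, so $G_{(a,b)}=OG_{(a,b)}$ on that stratum and $EG_{(a,b)}$ contributes nothing—consistent with Theorem~\ref{thm-even} listing no $\beta\ge 2$ case. The $ab$ even branch is handled the same way: $\beta=0$ is forced (Lemma~\ref{odd-good2} / Theorem~\ref{thmChar}~2), and comparing Theorem~\ref{thmChar}~2(b) with Theorem~\ref{oddlygood}~2(b) yields item~2(b). Alternatively, for a more self-contained argument one can invoke Proposition~\ref{odd-even} directly for the odd part: $d\in EG_{(a,b)}$ iff there is $s\ge 2$ with $2^s\|{\rm ord}_p(\tfrac ab)$ for all $p\mid d$, and then use Corollary~\ref{odd-good2} together with the parity bookkeeping in Proposition~\ref{disj}, Cases~3 and~4, to transfer the ``evenly-good'' status between $d$, $2d$, and (to rule it out) $2^\beta d$ with $\beta\ge 2$.

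The only genuinely delicate point is making sure the set-difference argument $EG = G \setminus OG$ is applied only where it is legitimate, namely for $\ell>2$, and that the boundary integers $\ell\in\{1,2\}$—which lie in the intersection $OG_{(a,b)}\cap EG_{(a,b)}$—are accounted for separately; this is exactly why the theorem's clauses split off the $d=1$ sub-cases. Everything else is routine: the equivalence ${\rm ord}_{2^\beta}(\tfrac ab)=2 \iff 2^\beta\mid(a+b)$ from Proposition~\ref{evengood} is not even needed here since, unlike Theorems~\ref{thmChar} and~\ref{oddlygood}, no $\beta\ge 2$ stratum survives in $EG_{(a,b)}$. I would close by remarking that Corollary~\ref{cor-evenly} (downward closure over divisors of $d$) is an immediate consequence of Proposition~\ref{odd-even}, since if $2^s\|{\rm ord}_p(\tfrac ab)$ for every prime $p\mid d$ then the same holds for every prime dividing any divisor $j$ of $d$.
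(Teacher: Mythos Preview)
Your proposal is correct and follows essentially the paper's own route: the paper presents Theorem~\ref{thm-even} as a summary of Proposition~\ref{odd-even} and Proposition~\ref{disj} with no separate proof, and your set-difference argument $EG_{(a,b)}=G_{(a,b)}\setminus OG_{(a,b)}$ for $\ell>2$, comparing Theorems~\ref{thmChar} and~\ref{oddlygood} clause by clause, is an equivalent repackaging of the same ingredients. One small omission to patch: your second paragraph rules out $\beta\ge 2$ only for $d\ge 3$ (via clause~1(d)), but the case $d=1$, $\beta\ge 2$ (i.e.\ $\ell=2^\beta\ge 4$) must also be excluded from $EG_{(a,b)}$; the identical comparison applied to clauses~1(c) of Theorems~\ref{thmChar} and~\ref{oddlygood}, or directly Case~2 of Proposition~\ref{disj}, handles this immediately.
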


The following necessary conditions for a positive integer to be  evenly-good integers  can be obtained from Proposition \ref{odd-even} and Theorem \ref{thm-even}.

\begin{corollary}
    Let $a$ and $b$    be   coprime  nonzero  integers and let $\ell=2^\beta d$ be a positive integer such that $d$ is odd and $\beta \ge 0$. Let $\gamma\geq 0$ be an integer such that $2^\gamma||(a+b)$. If  $\ell\in EG_{(a,b)}$, then one of the following statements holds.
    \begin{enumerate}
        \item {$\ell=1$ and  $ab$ is even.}
        \item {$\ell\in \{1,2\}$ and $ab$ is odd.}
        \item There exists an integer $s\geq 2$ such that  $2^s|| {\rm ord}_{\ell}(\frac{a}{b})$ if and only if  $d\geq 3$, $2^s|| {\rm ord}_{p}(\frac{a}{b})$ for every prime $p$ dividing $d$, and $0\leq \beta  \leq 1$.
    \end{enumerate}
\end{corollary}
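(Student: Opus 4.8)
The plan is to obtain this corollary directly from the characterization in Theorem~\ref{thm-even}, the only genuine content being the passage from the local conditions on ${\rm ord}_p(\frac{a}{b})$ (over the primes $p\mid d$) to the single global condition on ${\rm ord}_\ell(\frac{a}{b})$ appearing in item~3. First I would assume $\ell=2^\beta d\in EG_{(a,b)}$ with $d$ odd and apply Theorem~\ref{thm-even}. This forces $\beta\le 1$ when $ab$ is odd and $\beta=0$ when $ab$ is even (indeed an even $\ell\in G_{(a,b)}$ already forces $a$ and $b$ to be odd, since $2\mid a^k+b^k$ gives $a\equiv b\pmod 2$ and $a,b$ cannot both be even), and it splits the situation into the two cases $d=1$ and $d\ge 3$.

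If $d=1$, then $\ell=2^\beta$ with $\beta\le 1$, and with $\beta=0$ whenever $ab$ is even; hence $\ell=1$ if $ab$ is even, which is item~1, and $\ell\in\{1,2\}$ if $ab$ is odd, which is item~2. This disposes of the degenerate case.

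Now suppose $d\ge 3$, say $d=p_1^{r_1}\cdots p_t^{r_t}$, so $\ell=2^\beta p_1^{r_1}\cdots p_t^{r_t}$ with $0\le\beta\le 1$. By the Chinese Remainder Theorem, ${\rm ord}_\ell(\frac{a}{b})$ is the least common multiple of ${\rm ord}_{2^\beta}(\frac{a}{b})$ and of ${\rm ord}_{p_i^{r_i}}(\frac{a}{b})$ for $i=1,\dots,t$; since $a$ and $b$ are odd whenever $\beta=1$, the factor ${\rm ord}_{2^\beta}(\frac{a}{b})$ equals $1$, so the $2$-adic valuation of ${\rm ord}_\ell(\frac{a}{b})$ equals $\max_i v_2\bigl({\rm ord}_{p_i^{r_i}}(\frac{a}{b})\bigr)$. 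By Lemma~\ref{ord}, ${\rm ord}_{p_i^{r_i}}(\frac{a}{b})={\rm ord}_{p_i}(\frac{a}{b})\,p_i^{j_i}$ for some $j_i\ge 0$, and since $p_i$ is odd this gives $v_2\bigl({\rm ord}_{p_i^{r_i}}(\frac{a}{b})\bigr)=v_2\bigl({\rm ord}_{p_i}(\frac{a}{b})\bigr)$. Hence $2^s|| {\rm ord}_\ell(\frac{a}{b})$ holds (for the unique $s=v_2({\rm ord}_\ell(\frac{a}{b}))$) exactly when $s=\max_i v_2\bigl({\rm ord}_{p_i}(\frac{a}{b})\bigr)$. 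Finally, Theorem~\ref{thm-even} tells us that in this case all the valuations $v_2\bigl({\rm ord}_{p_i}(\frac{a}{b})\bigr)$ share a common value $s\ge 2$; feeding this into the equivalence just obtained produces exactly the biconditional of item~3 --- the reverse implication following by reading the same chain of equalities backwards --- and the proof is complete.

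The only step needing any care is the order computation in the last paragraph: one must remember to include the even factor $2^\beta$ and to use that $a$ and $b$ are odd (equivalently, to invoke Proposition~\ref{prop2d}) precisely when $\beta=1$, so that this factor does not affect the $2$-adic valuation of ${\rm ord}_\ell(\frac{a}{b})$. The rest is a routine case check against Theorem~\ref{thm-even} and Proposition~\ref{odd-even}.
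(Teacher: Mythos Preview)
Your proposal is correct and follows the route the paper indicates: the paper does not give a detailed argument here but simply states that the corollary follows from Proposition~\ref{odd-even} and Theorem~\ref{thm-even}, and you carry this out by applying Theorem~\ref{thm-even} for the case split and then using the Chinese Remainder Theorem together with Lemma~\ref{ord} (and Proposition~\ref{prop2d} for $\beta=1$) to pass from the local conditions on ${\rm ord}_p(\frac{a}{b})$ to the global condition on ${\rm ord}_\ell(\frac{a}{b})$. This is exactly the computation the paper leaves implicit.
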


%

\section{Applications}
This section is devoted to applications of good integers in  coding theory.     As discussed in the introduction, good and oddly-good integers have wide applications in coding theory. The important ones are  the construction of good BCH codes in
\cite{KG1969}, the enumerations of  the Euclidean self-dual  cyclic and abelian codes   in  \cite{JLX2011} and \cite{JLLX2012}, the enumeration of  Hermitian  self-dual    abelian codes in 
\cite{JLS2014},   the study of the average dimension of the hulls of cyclic codes in 
\cite{S2003},  and the  determinations of  the dimension of the hulls of cyclic and negacyclic codes in \cite{SJLP2015}.

The hull of a linear code  is known to be     key  in determining  the complexity  of algorithms    in finding  the automorphism group of a linear code and testing the permutation  equivalence of  two codes \cite{Sendrier997},  \cite{Leon82},  \cite{Leon91},  \cite{Leon97}, \cite{Sendrier00},  and \cite{Sendrier01}.    Therefore, the study of the hulls is one of the interesting problems in coding theory. Some properties of the hulls of linear and cyclic codes have been further studied    in \cite{Sendrier97}, \cite{S2003} and \cite{SJLP2015}.

In this section, we  focus on applications of $G_{(p^\nu,1)}$ and $OG_{(p^\nu,1)}$ in determining  the average dimension of the  hulls of abelian codes under both the Euclidean and Hermitian inner products which have not been well studied. Specifically, we are mainly focused on the hulls of abelian codes  in principal ideal group  algebras. As a special case, the results   on  the hulls of cyclic codes in \cite{S2003} can be viewed as corollaries.

\subsection{Abelian Codes in Principal Ideal Group Algebras}

For  a prime $p$ and a positive integer $\nu$,    let  $\mathbb{F}_{p^\nu}$ denote   the  finite field of $p^\nu$.  Let $G$ be a finite abelian group, written additively. Denote by    $\mathbb{F}_{p^\nu}[G]$   the {\it group algebra} of
$G$ over~$\mathbb{F}_{p^\nu}$. The elements in $ \mathbb{F}_{p^\nu}[G]$ will be written as $\sum_{g\in G}\alpha_{{g }}Y^g $,
where $ \alpha_{g }\in \mathbb{F}_{p^\nu}$.  The addition and the multiplication in $ \mathbb{F}_{p^\nu}[G]$ are  given as in the usual polynomial rings over $\mathbb{F}_{p^\nu}$ with the indeterminate $Y$, where the indices are computed additively in $G$.      A group  algebra  $ \mathbb{F}_{p^\nu}[G]$ is said to be a  {\em principal ideal group  algebra} (PIGA) if every ideal in $ \mathbb{F}_{p^\nu}[G]$ is generated by a single element. 
In \cite{FiSe1976}, it has been shown that $\mathbb{F}_{p^\nu}[G]$ is a PIGA if and only if the Sylow $p$-subgroup of $G$  is cyclic.

An {\em abelian code} in  $\mathbb{F}_{p^\nu}[G]$  is   an ideal in $\mathbb{F}_{p^\nu}[G]$ (see \cite{JLLX2012} and \cite{JLS2014}). The {\em Euclidean inner product}  between $u=\sum_{g\in G}u_{{g }}Y^g $ and  $v=\sum_{g\in G}v_{{g }}Y^g $   $\mathbb{F}_{p^\nu}[G]$  is defined to be 
$\langle u,v\rangle _{\rm E}:= \sum_{g\in G} u_gv_g.$ The {\em Euclidean dual} of an abelian code $C$ in  $\mathbb{F}_{p^\nu}[G]$ is defined to be $C^{\perp_{\rm E}}:=\{v\in \mathbb{F}_{p^\nu}[G]\mid  \langle c,v\rangle _{\rm E} =0 \text{ for all } c\in C\}$. The {\em Euclidean hull} of a code $C$ is defined to be  $\mathcal{H}_{\rm E}(C):=C\cap C^{\perp_{\rm E}}.$
In $\mathbb{F}_{{p^{2\nu}}}[G]$, the {\em Hermitian inner product}  between $u=\sum_{g\in G}u_{{g }}Y^g $ and  $v=\sum_{g\in G}v_{{g }}Y^g $   $\mathbb{F}_{p^{2\nu}}[G]$  is defined to be 
$\langle u,v\rangle _{\rm H}:= \sum_{g\in G} u_gv_g^{p^\nu}.$ The {\em Hermitian dual} of an abelian code $C$ in  $\mathbb{F}_{p^{2\nu}}[G]$ is defined to be $C^{\perp_{\rm H}}:=\{v\in \mathbb{F}_{{p^{2\nu}}}[G]\mid  \langle c,v\rangle _{\rm H} =0 \text{ for all } c\in C\}$. The {\em Hermitian hull} of a code $C$ is defined to be  $\mathcal{H}_{\rm H}(C):=C\cap C^{\perp_{\rm H}}.$

The average dimension of the Euclidean (resp.  Hermitian) hulls of abelian codes in  $\mathbb{F}_{p^{\nu}}[G]$ (resp.,  in  $\mathbb{F}_{{p^{2\nu}}}[G])$ is defined to be 
\[   {\rm avg}_{p^\nu}^{\rm E}(G):=\frac{\sum\limits_{C\in \mathcal{C}(p^\nu,G)}\dim(\mathcal{H}_{\rm E}(C))}{|\mathcal{C}(p^\nu,G)|}\text{ (resp., $ {\rm avg}_{{p^{2\nu}}}^{\rm H}(G):=\frac{\sum\limits_{C\in \mathcal{C}({p^{2\nu}},G)}\dim(\mathcal{H}_{\rm H}(C))}{|\mathcal{C}({p^{2\nu}},G)|} $)},\]
where $\mathcal{C}(p^\nu,G)$ (resp., in  $\mathcal{C}({p^{2\nu}},G)$) is the set of all abelian codes in  $\mathbb{F}_{p^{\nu}}[G]$ (resp., in  $\mathbb{F}_{{p^{2\nu}}}[G])$.

The rest of this section,  we focus on  abelian codes in    PIGAs.  It therefore suffices to restrict the study to  a group algebra  $\mathbb{F}_{p^\nu}[ A\times \mathbb{Z}_{p^k}]$, where  $A$ is a finite  abelian group  such that $p\not | |A|$ and  $k\geq 0$ is an integer.

For  positive integers $i$ and $j$ with $\gcd(i,j)=1$, let ${\rm ord}_j(i)$ denote the multiplicative order of $i$ modulo $j$. For each $a\in A$, denote by ${\rm ord}(a)$ the additive order of $a$ in $A$.  For a positive integer $q$ with $\gcd(|A|,q)=1$, a {\it $q$-cyclotomic class}   of $A$ containing $a\in A$ is defined to be the set
\begin{align*}
S_{q}(a):=&\{q^{ i}\cdot a \mid i=0,1,\dots\}
=\{q^{ i}\cdot a \mid 0\leq i< {\rm ord}_{{\rm ord}(a)}(q) \}, 
\end{align*}
where $q^{i}\cdot a:= \sum\limits_{j=1}^{q^{i}}a$ in $A$.

First, we consider the decomposition of  $\mathcal{R}:=\mathbb{F}_{p^{\nu}}[A]$.  In this case, $\mathcal{R}$ is semi-simple (see \cite{Be1967_2}) which can be  decomposed  using the Discrete Fourier Transform  in  \cite{RS1992}   (see \cite{JLS2014} and \cite{JLLX2012} for more details).  For completeness, the decomposition used in this paper is  summarized as follows.

An {\em idempotent} in $\mathcal{R}$ is a nonzero element $e$ such that $e^2=e$. It is called {\em primitive} if for every other idempotent $f$, either $ef=e$ or $ef=0$.  The primitive idempotents in $\mathcal{R}$ are induced by the $p^\nu$-cyclotomic classes of $A$ (see \cite[Proposition II.4]{DKL2000}).   Let $\{ a_1,a_2,\dots, a_t\}$ be a complete set of representatives of $p^\nu$-cyclotomic classes  of $A$  and let $e_i$ be the primitive idempotent induced by $S_{p^{\nu}}(a_i)$  for all $1\leq i\leq t$.  From \cite{RS1992},   $\mathcal{R}$ can be decomposed as 
\begin{align}\label{eq-decom0}
\mathcal{R}= \bigoplus_{i=1}^t\mathcal{R}e_i .
\end{align}
It is well known (see \cite{JLLX2012}  and \cite{JLS2014}) that $\mathcal{R}e_i\cong \mathbb{F}_{p^{\nu s_i}}$,  where  $s_i=|S_{p^{\nu}}(a_i)|$  provided that   $e_i$ is induced by $S_{p^{\nu}}(a_i)$, and hence,
\begin{align}\label{eq-decom01}
\mathbb{F}_{p^{\nu}}[A\times\mathbb{Z}_{p^k}]  
\cong \bigoplus_{i=1}^t(\mathcal{R}e_i )[\mathbb{Z}_{p^k}]\cong \prod_{i=1}^t \mathbb{F}_{p^{\nu s_i}}[\mathbb{Z}_{p^k}] \cong \prod_{i=1}^t \mathbb{F}_{p^{\nu s_i}}[x]/\langle x^{p^k}-1\rangle .
\end{align}
Therefore, every abelian code $C$ in  $  \mathbb{F}_{p^{\nu}}[A\times\mathbb{Z}_{p^k}]  $ can be viewed as   
\[ C\cong \prod_{i=1}^t C_i,\]
where $C_i$ is  a cyclic code of length $p^k$  over  $\mathbb{F}_{p^{\nu s_i}}$ for all $i=1,2,\dots, t$.

\begin{remark} \label{remE=H}
    It is well known that every cyclic code  $D$ of length $p^k$ over  $\mathbb{F}_{p^\nu}$ is uniquely  generated as ideal in $\mathbb{F}_{p^{\nu  }}[x]/\langle x^{p^k}-1\rangle$  by $(x-1)^j$ for some $0\leq j \leq p^k$. Note that such  a code  has $\mathbb{F}_{p^\nu}$-dimension $p^k-j$.  The Euclidean and Hermitian duals of $D$ are of the same form
    $D^{\perp _{\rm E}}=D^{\perp _{\rm H}}$
    generated by $(x-1)^{p^k-j}$.
\end{remark}

In order to study properties of the hulls  abelian codes in PIGAs  under  the   Euclidean and Hermitian inner products,  two rearrangements  of the ideals $\mathcal{R}e_i$  in the decomposition \eqref{eq-decom01}  will be discussed in the following subsections.

\subsection{The Average Dimension of the Euclidean Hull of Abelian Codes in PIGAs}

In this section,  we focus on an application of good integers in determining of the average dimension of the Euclidean  hulls of abelian codes in $\mathbb{F}_{p^\nu}[A\times \mathbb{Z}_{p^k}]$, where $\nu>0 $ and $k\geq 0$ are integers and $ p\nmid |A|$.

A $p^\nu$-cyclotomic class $S_{p^{\nu}}(a)$ is said to be of  {\em type} ${\I}$     if $S_{p^{\nu}}(a)=S_{p^{\nu}}(-a)$, or {\em type} ${\II}$   if $S_{p^{\nu}}(-a)\neq S_{p^{\nu}}(a)$. 	 

Without loss of generality, the representatives $a_1, a_2, \dots, a_t$ of  $p^\nu$-cyclotomic classes   of $A$  can be  chosen such that $\{a_j| j=1,2,\dots,{r_{\I}}\}$ and $\{a_{r_{\I}+l}, a_{r_{I}+r_{\II}+l}=-a_{r_{I}+l} \mid l=1,2,\dots, r_{\II}\}$ are  sets of representatives of $p^\nu$-cyclotomic classes of $A$ of types $\I$ and ${\II}$, respectively, where $t=r_{\I}+2r_{\II}$.    As assumed above,  $s_i=|S_{p^\nu}(a_i)|$ for all $1\leq i\leq t$. Clearly, $s_{r_\I+l}=s_{r_\I+r_\II+l}$ for all $1\leq l\leq r_\II$.

Rearranging the terms in the decomposition of  $\mathcal{R}$ (see \eqref{eq-decom0})  based on these  $2$ types of cyclotomic classes in \cite{JLLX2012},  we have 
\begin{align}
\mathbb{F}_{p^{\nu}}[A\times\mathbb{Z}_{p^k}]    & 
\cong   \left( \prod_{j=1}^{r_{\I}} \mathbb{K}_j[\mathbb{Z}_{p^k}]  \right) \times \left( \prod_{l=1}^{r_{\II}} (\mathbb{L}_l[\mathbb{Z}_{p^k}]  \times \mathbb{L}_l[\mathbb{Z}_{p^k}] )  \right), \label{eqSemiSim}
\end{align}
where   $ \mathbb{K}_j\cong \mathbb{F}_{p^{\nu s_{j}}}  $ for all   $j=1,2,\dots, r_{\I}$  and  $ \mathbb{L}_l  \cong \mathbb{F}_{  p^{\nu s_{r_\I+l}}}   $      for all $l=1,2,\dots, r_{\II}$.

From \eqref{eqSemiSim},  it follows that an abelian code $C$ in  $\mathbb{F}_{p^{\nu}}[A\times\mathbb{Z}_{p^k}]   $ can be viewed as 
\begin{align}\label{decomC} 
C\cong   \left(\prod_{j=1}^{r_{\I}} C_j  \right)\times \left(\prod_{l=1}^{r_{\II}} \left( D_l\times D_l^\prime\right) \right), \end{align}
where $C_j$, $D_s$ and $D_s^\prime$ are   cyclic   codes in        $\mathbb{K}_j[\mathbb{Z}_{p^k}]$, $\mathbb{L}_l[\mathbb{Z}_{p^k}]$ and $\mathbb{L}_l[\mathbb{Z}_{p^k}]$, respectively,  for all    $j=1,2,\dots,r_{\I}$ and  $l=1,2,\dots,r_{\II}$.

From   \cite[Section II.D]{JLLX2012} and Remark \ref{remE=H}, the Euclidean dual of $C$  in (\ref{decomC}) is 

\begin{align} \label{eq-Edual}
C^{\perp_{\rm E}}\cong    \left(\prod_{j=1}^{r_{\I}} C_j ^{\perp_{\rm E}} \right)\times \left(\prod_{l=1}^{r_{\II}} \left( (D_l^\prime) ^{\perp_{\rm E}}\times  D_l^{\perp_{\rm E}}\right) \right).
\end{align}

\begin{lemma} \label{correspEP}
    There is a one-to-one correspondence between  $\mathcal{C}(p^\nu,A\times\mathbb{Z}_{p^k})$ and  the set  $\{(\epsilon_1,\epsilon_2, \dots, \epsilon_t)\mid 0\leq \epsilon_i\leq p^k \text{ for all } 1\leq i\leq t\}$, where $t=r_\I+2r_\II$.
\end{lemma}
\begin{proof}
    From \eqref{decomC} and the discussion above, it is not difficult to see that the map \begin{align*}C\mapsto  (& \dim_{\mathbb{F}_{p^{\nu s_1}}}(C_1) , \dots,  \dim_{\mathbb{F}_{p^{\nu s_{r_\I}}}}(C_{r_\I}) ,\\ & \dim_{\mathbb{F}_{p^{\nu s_{r_\I+1}}}}(D_1),\dots,\dim_{\mathbb{F}_{p^{\nu s_{r_\I+r_\II}}}}(D_{r_{\II}}), \dim_{\mathbb{F}_{p^{\nu s_{r_\I+1}}}}(D_1^\prime),\dots, \dim_{\mathbb{F}_{p^{\nu s_{r_\I+r_\II}}}}(D_{r_\II}^\prime ) )\end{align*}
    is a one-to-one correspondence from  $\mathcal{C}(p^\nu,A\times\mathbb{Z}_{p^k})$ to $\{(\epsilon_1,\epsilon_2, \dots, \epsilon_t)\mid 0\leq \epsilon_i\leq p^k \text{ for all } 1\leq i\leq t\}$.
\end{proof}

Form the above discussion, Remark \ref{remE=H} and Lemma \ref{correspEP}, the next lemma follows.
\begin{lemma}\label{hullE}
    Let  $C$ be an abelian code in $\mathbb{F}_{p^\nu} [ A\times\mathbb{Z}_{p^k}]$ decomposed as in  (\ref{decomC}).  Then the following statements hold.
    \begin{enumerate}
        \item    The Euclidean hull of $C$ is    \begin{align*}
        \mathcal{H}_{\rm E}(C)\cong  \left(\prod_{j=1}^{r_{\I}}  (C_j \cap C_j ^{\perp_{\rm E}}) \right)\times \left(\prod_{l=1}^{r_{\II}} \left( (D_l\cap(D_l^\prime) ^{\perp_{\rm E}})\times  (D_l^\prime \cap D_l^{\perp_{\rm E}})\right) \right).
        \end{align*}
        \item    If  $C$ corresponds to $(\epsilon_1, \epsilon_2,\dots,\epsilon_t)$ with $t=r_\I+2r_\II$ as in Lemma \ref{correspEP}, then 
        \begin{align*} 
        \dim(\mathcal{H}_{\rm E}(C))=&\sum_{j=1} ^{r_\I} s_j\min(\epsilon_j, p^k-  \epsilon_j)   +\sum_{l=1}^{r_\II}    s_{r_\I+l}    \min(\epsilon_{r_\I+l}, p^k-  \epsilon_{r_\I+r_\II+l})     \notag
        \\
        &+\sum_{l=1}^{r_\II}    s_{r_\I+l}    \min(\epsilon_{r_\I+r_\II+l}, p^k-  \epsilon_{r_\I+l})    .
        \end{align*}
    \end{enumerate}
    
\end{lemma}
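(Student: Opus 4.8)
The plan is to derive Lemma \ref{hullE} directly from the decomposition \eqref{decomC}, the description of the Euclidean dual in \eqref{eq-Edual}, and the structure of cyclic codes of $p$-power length over a field of characteristic $p$ recorded in Remark \ref{remE=H}. First I would prove part (1). Since the isomorphism $\mathbb{F}_{p^\nu}[A\times\mathbb{Z}_{p^k}]\cong\left(\prod_{j=1}^{r_\I}\mathbb{K}_j[\mathbb{Z}_{p^k}]\right)\times\left(\prod_{l=1}^{r_\II}(\mathbb{L}_l[\mathbb{Z}_{p^k}]\times\mathbb{L}_l[\mathbb{Z}_{p^k}])\right)$ is a ring isomorphism, intersections of ideals are computed componentwise: if $C$ corresponds to the tuple $\left((C_j)_j,(D_l,D_l')_l\right)$ and $C^{\perp_{\rm E}}$ corresponds to $\left((C_j^{\perp_{\rm E}})_j,((D_l')^{\perp_{\rm E}},D_l^{\perp_{\rm E}})_l\right)$ by \eqref{eq-Edual}, then $\mathcal{H}_{\rm E}(C)=C\cap C^{\perp_{\rm E}}$ corresponds to the tuple of componentwise intersections, which is exactly the stated formula. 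The only subtlety is matching up the two copies of $\mathbb{L}_l[\mathbb{Z}_{p^k}]$ correctly: in the $l$-th type-$\II$ block, $C$ has coordinates $(D_l,D_l')$ while $C^{\perp_{\rm E}}$ has $((D_l')^{\perp_{\rm E}},D_l^{\perp_{\rm E}})$, so the intersection in that block is $(D_l\cap(D_l')^{\perp_{\rm E}})\times(D_l'\cap D_l^{\perp_{\rm E}})$, as claimed.

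Next I would prove part (2) by computing the $\mathbb{F}_{p^\nu}$-dimension of each factor in part (1). By Remark \ref{remE=H}, every cyclic code of length $p^k$ over a field $\mathbb{F}$ is generated by $(x-1)^j$ for a unique $0\le j\le p^k$, has $\mathbb{F}$-dimension $p^k-j$, and its (Euclidean or Hermitian) dual is generated by $(x-1)^{p^k-j}$. Because these codes form a chain under inclusion, the intersection of the code generated by $(x-1)^{j_1}$ with the code generated by $(x-1)^{j_2}$ is the code generated by $(x-1)^{\max(j_1,j_2)}$, whose $\mathbb{F}$-dimension is $p^k-\max(j_1,j_2)=\min(p^k-j_1,p^k-j_2)$. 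Now if $C_j$ has $\mathbb{K}_j$-dimension $\epsilon_j$, it is generated by $(x-1)^{p^k-\epsilon_j}$, so $C_j^{\perp_{\rm E}}$ is generated by $(x-1)^{\epsilon_j}$, and $C_j\cap C_j^{\perp_{\rm E}}$ has $\mathbb{K}_j$-dimension $\min(\epsilon_j,p^k-\epsilon_j)$; since $\mathbb{K}_j\cong\mathbb{F}_{p^{\nu s_j}}$, its $\mathbb{F}_{p^\nu}$-dimension is $s_j\min(\epsilon_j,p^k-\epsilon_j)$. Similarly, writing $\epsilon_{r_\I+l}=\dim_{\mathbb{L}_l}(D_l)$ and $\epsilon_{r_\I+r_\II+l}=\dim_{\mathbb{L}_l}(D_l')$ under the correspondence of Lemma \ref{correspEP}, the code $D_l$ is generated by $(x-1)^{p^k-\epsilon_{r_\I+l}}$ and $(D_l')^{\perp_{\rm E}}$ by $(x-1)^{\epsilon_{r_\I+r_\II+l}}$, so $D_l\cap(D_l')^{\perp_{\rm E}}$ has $\mathbb{L}_l$-dimension $\min(p^k-\epsilon_{r_\I+l},\,p^k-\epsilon_{r_\I+r_\II+l})$... but I must be careful here: the claimed answer is $\min(\epsilon_{r_\I+l},\,p^k-\epsilon_{r_\I+r_\II+l})$, so the bookkeeping of which exponent is $\epsilon$ versus $p^k-\epsilon$ needs to be pinned down against the exact conventions of \cite{JLLX2012} and Lemma \ref{correspEP}. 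Summing the $\mathbb{F}_{p^\nu}$-dimensions of all factors, using $\dim_{\mathbb{F}_{p^\nu}}=s_{r_\I+l}\dim_{\mathbb{L}_l}$ and $s_{r_\I+l}=s_{r_\I+r_\II+l}$, yields the displayed formula.

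The main obstacle is the careful matching of indices and of the roles of $\epsilon$ and $p^k-\epsilon$ in the type-$\II$ blocks: one has to track precisely how the correspondence in Lemma \ref{correspEP} assigns the coordinates $\epsilon_{r_\I+l}$ and $\epsilon_{r_\I+r_\II+l}$ to $D_l$ and $D_l'$, and how the ``swap'' in the Euclidean dual \eqref{eq-Edual} (the $l$-th block of $C^{\perp_{\rm E}}$ being $(D_l')^{\perp_{\rm E}}\times D_l^{\perp_{\rm E}}$ rather than $D_l^{\perp_{\rm E}}\times(D_l')^{\perp_{\rm E}}$) interacts with the generator-exponent arithmetic above. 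Once the convention is fixed, each step is a routine dimension count, and part (1) is immediate from the componentwise behavior of ideal intersection under a product decomposition of rings. Everything else — the chain property of $p$-power-length cyclic codes, the dimension formula, and the duality statement — is supplied by Remark \ref{remE=H}.
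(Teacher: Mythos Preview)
Your approach is exactly the one the paper takes: the paper's proof is the single line ``Form the above discussion, Remark \ref{remE=H} and Lemma \ref{correspEP}, the next lemma follows,'' and your write-up simply spells out those steps.

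The ``obstacle'' you flag in the type-$\II$ blocks is not a convention issue but an arithmetic slip on your side. With $\dim_{\mathbb{L}_l}(D_l)=\epsilon_{r_\I+l}$ and $\dim_{\mathbb{L}_l}(D_l')=\epsilon_{r_\I+r_\II+l}$, the generator of $D_l$ is $(x-1)^{p^k-\epsilon_{r_\I+l}}$ and that of $(D_l')^{\perp_{\rm E}}$ is $(x-1)^{\epsilon_{r_\I+r_\II+l}}$; hence
\[
\dim_{\mathbb{L}_l}\bigl(D_l\cap (D_l')^{\perp_{\rm E}}\bigr)=p^k-\max\bigl(p^k-\epsilon_{r_\I+l},\,\epsilon_{r_\I+r_\II+l}\bigr)=\min\bigl(\epsilon_{r_\I+l},\,p^k-\epsilon_{r_\I+r_\II+l}\bigr),
\]
which is precisely the stated term. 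The companion factor $D_l'\cap D_l^{\perp_{\rm E}}$ gives $\min(\epsilon_{r_\I+r_\II+l},\,p^k-\epsilon_{r_\I+l})$ by the same count, so no delicate matching against \cite{JLLX2012} is needed beyond what is already recorded in \eqref{eq-Edual} and Lemma \ref{correspEP}.
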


Let 
$
Q_{p^\nu}(A)=\{a\in A\mid -a\in S_{p^\nu}(a)\}.
$
It is not difficult to see that $Q_{p^\nu}(A)$  is  the union of all  $p^\nu$-cyclotomic classes of $A$ of types $\I$ and $|Q_{p^\nu}(A)|=\sum\limits_{j=1}^{r_\I} s_j$, where $s_j=|S_{p^\nu}(a_j)| $ for all $1\leq j\leq r_\I$.    

The average dimension of the Euclidean hulls of abelian codes in $\mathbb{F}_{p^\nu}[A\times \mathbb{Z}_{p^k}]$ is determined using the technique in \cite{S2003} as follows.

\begin{theorem} \label{thmAVG} Let $p$ be  a prime and let $\nu$ be a positive integer. Let $A$ be a finite abelian group of order $m$ such that $p\nmid m$. Then  
    \begin{align*}
    {\rm avg}_{p^\nu}^{\rm E}(A\times\mathbb{Z}_{p^k})= mp^k\left( \frac{1}{3}-\frac{1}{6(p^k+1)}\right)- |Q_{p^{\nu}}(A)|\left( \frac{p^k+1}{12}+\frac{2-3\delta_p}{12(p^k+1)}\right),
    \end{align*}
    where 
    $\delta_p=\begin{cases}
    1 &\text{ if }p=2,\\
    0&\text{ if }p \text{ is odd}.
    \end{cases}$
\end{theorem}
\begin{proof}
    Let $X$ be  the random variable  of the dimension  $\dim(\mathcal{H}_{\rm E}(C))$,  where $C$  is chosen randomly from $\mathcal{C}(p^\nu, A\times\mathbb{Z}_{p^k})$ with uniform probability.  Then $  {\rm avg}_{p^\nu}^{\rm E}(A\times\mathbb{Z}_{p^k})$ is the expectation $E(X)$ of $X$.
    Using Lemma \ref{hullE} and the arguments similar to those in the proof of   \cite[Proposition 22]{S2003},  $E(X)$ can be determined. Hence, the formula for $  {\rm avg}_{p^\nu}^{\rm E}(A\times\mathbb{Z}_{p^k})$ follows. 
\end{proof}

We note that if $A$ is the cyclic group of order $m$, then the average dimension of the hulls of cyclic codes of length $mp^k$  can be obtained as a special case of Theorem~\ref{thmAVG}.

Since  $0\in Q_{p^{\nu}}(A)$, we have $|Q_{p^{\nu}}(A)|\geq 1$. Hence, by Theorem \ref{thmAVG}, the next corollary follows.

\begin{corollary}\label{corBoundE1}
    Let $p$ be  a prime and let $\nu$ be a positive integer.  Let $A$ be a finite abelian group of order $m$ such that $p\nmid m$. Then  the following statements hold.
    
    \begin{enumerate}
        \item   $ {\rm avg}_{p^\nu}^{\rm E}(A\times\mathbb{Z}_{p^k})< \frac{mp^k}{3} $.

        \item   $ {\rm avg}_{p^\nu}^{\rm E}(A)=\frac{m-|Q_{p^\nu}(A)|}{4}$

        \item   $ {\rm avg}_{p^\nu}^{\rm E}(A)\leq \frac{m-1}{4}$. 
        
    \end{enumerate}
    
\end{corollary}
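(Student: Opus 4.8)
The plan is to derive all three items of Corollary \ref{corBoundE1} directly from the closed-form expression in Theorem \ref{thmAVG}, so the work is mostly bookkeeping with the two parameters $p^k$ and $|Q_{p^\nu}(A)|$.

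For item (1), I would start from
\[
{\rm avg}_{p^\nu}^{\rm E}(A\times\mathbb{Z}_{p^k})= mp^k\left( \frac{1}{3}-\frac{1}{6(p^k+1)}\right)- |Q_{p^{\nu}}(A)|\left( \frac{p^k+1}{12}+\frac{2-3\delta_p}{12(p^k+1)}\right),
\]
and observe that the second term is nonnegative: the factor $|Q_{p^\nu}(A)|\ge 1$, and the bracket $\frac{p^k+1}{12}+\frac{2-3\delta_p}{12(p^k+1)}$ is positive since $2-3\delta_p\ge -1$ and $\frac{p^k+1}{12}\ge\frac{2}{12}>\frac{1}{12(p^k+1)}$. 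Hence ${\rm avg}_{p^\nu}^{\rm E}(A\times\mathbb{Z}_{p^k})\le mp^k\left(\frac13-\frac{1}{6(p^k+1)}\right)<\frac{mp^k}{3}$, the last inequality because $\frac{1}{6(p^k+1)}>0$. (The strictness is immediate; one does not even need the second term to be strictly positive.)

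For items (2) and (3), I would specialize Theorem \ref{thmAVG} to $k=0$, i.e. to the group algebra $\mathbb{F}_{p^\nu}[A]$ itself, which corresponds to $p^k=1$. Plugging $p^k=1$ in gives $mp^k\left(\frac13-\frac{1}{6\cdot 2}\right)=m\left(\frac13-\frac{1}{12}\right)=\frac{m}{4}$ for the first term, and for the second term $\frac{p^k+1}{12}+\frac{2-3\delta_p}{12(p^k+1)}=\frac{2}{12}+\frac{2-3\delta_p}{24}=\frac{4+2-3\delta_p}{24}=\frac{6-3\delta_p}{24}=\frac{2-\delta_p}{8}$; but wait — this must collapse to $\frac14$ for the stated formula $\frac{m-|Q_{p^\nu}(A)|}{4}$ to hold, which it does not unless $\delta_p=0$. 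This discrepancy is the one subtlety worth flagging: when $p$ is odd, $\delta_p=0$ and the coefficient is $\frac{2}{8}=\frac14$, giving exactly ${\rm avg}_{p^\nu}^{\rm E}(A)=\frac{m-|Q_{p^\nu}(A)|}{4}$; when $p=2$ the group algebra $\mathbb{F}_{2^\nu}[A]$ has $A$ of odd order, so in the cyclic-code language of \cite{S2003} the relevant length is odd and the correction term $\delta_p$ was designed precisely to make the two regimes agree, so one should check that at $p^k=1$ the formula still yields $\frac14$ — indeed $\frac{2-\delta_2}{8}=\frac18$, so a careful reading shows the $p=2$, $k=0$ case needs the observation that then $Q_{2^\nu}(A)$ coincides with something forcing consistency, or that item (2) is stated for the semisimple (odd) situation; I would resolve this by noting $k=0$ forces $p^k=1$ and reading off the coefficient, citing Theorem \ref{thmAVG} as the source. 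Once ${\rm avg}_{p^\nu}^{\rm E}(A)=\frac{m-|Q_{p^\nu}(A)|}{4}$ is established, item (3) is immediate: $|Q_{p^\nu}(A)|\ge 1$ because $0\in Q_{p^\nu}(A)$ (since $-0=0\in S_{p^\nu}(0)$), whence ${\rm avg}_{p^\nu}^{\rm E}(A)=\frac{m-|Q_{p^\nu}(A)|}{4}\le\frac{m-1}{4}$.

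The main obstacle, such as it is, is not difficulty but the bookkeeping around $\delta_p$ at $p^k=1$: one must confirm that the substitution of $p^k=1$ into the general formula of Theorem \ref{thmAVG} genuinely produces the clean coefficient $\frac14$ in the context where item (2) is asserted, rather than carrying an extra $\delta_p$-dependent term. Everything else — the sign of the subtracted bracket in (1), and the bound $|Q_{p^\nu}(A)|\ge 1$ in (3) — is routine. I expect the final proof to be three or four short lines invoking Theorem \ref{thmAVG} and the inequality $|Q_{p^\nu}(A)|\ge 1$.
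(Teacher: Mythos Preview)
Your approach matches the paper's one-line justification (``Since $0\in Q_{p^{\nu}}(A)$, we have $|Q_{p^{\nu}}(A)|\geq 1$. Hence, by Theorem~\ref{thmAVG}, the next corollary follows''), and items (1) and (3) go through exactly as you describe.

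The subtlety you flag in item (2) is real, and it is not resolved by any hidden structure of $Q_{2^\nu}(A)$: substituting $p=2$, $k=0$ into Theorem~\ref{thmAVG} as literally stated gives $\frac{m}{4}-\frac{|Q_{2^\nu}(A)|}{8}$, not $\frac{m-|Q_{2^\nu}(A)|}{4}$. Item (2) is nevertheless correct, as one checks directly from Lemma~\ref{hullE} at $p^k=1$: the type-$\I$ contributions vanish (since $\min(\epsilon,1-\epsilon)=0$ for $\epsilon\in\{0,1\}$) and each type-$\II$ pair contributes $s_{r_\I+l}/2$ on average, yielding $\frac{m-|Q_{p^\nu}(A)|}{4}$ irrespective of $p$. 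The mismatch is a minor imprecision in Theorem~\ref{thmAVG}: the correction $\delta_p$, inherited from the cyclic-code computation in \cite{S2003}, actually tracks the parity of $p^k$ rather than of $p$, and $p^0=1$ is odd even when $p=2$. So for a clean write-up of item (2) you should either appeal directly to Lemma~\ref{hullE} at $k=0$, or note that $\delta_p$ in Theorem~\ref{thmAVG} must be read as $0$ when $k=0$; the paper's own proof sketch does not address this.
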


Let $\chi$   be a function defined by 
\[
\chi(d,p^\nu)=
\begin{cases}
1 &\text{ if } d  \in G_{(p^\nu,1)},\\
0 &\text{otherwise} .\\		
\end{cases}
\]  
The function $\chi$ and 
good  integers play an important  role in determining    ${\rm avg}_{p^\nu}^{\rm E}(A\times\mathbb{Z}_{p^k})$    in the following results.

\begin{lemma}[{\cite[Lemma 4.5]{JLLX2012} }] \label{propType} Let $p$ be  a prime and let $\nu$ be a positive integer.  Let $A$ be a finite  abelian group    such that $p\nmid |A|$  and let  $a\in A$. Then  $S_{p^\nu}(a) $ is of type $\I$  if and only if ${\rm ord}(a)\in G_{(p^\nu,1)}$. 
\end{lemma}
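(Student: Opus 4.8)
The plan is to unwind the definitions and reduce the claim to the elementary fact that $-a \in S_{p^\nu}(a)$ if and only if $-1$ is a power of $p^\nu$ modulo ${\rm ord}(a)$, and then to recognize the latter as precisely the statement that ${\rm ord}(a)$ divides $(p^\nu)^k + 1$ for some $k \ge 1$, i.e. that ${\rm ord}(a) \in G_{(p^\nu,1)}$. First I would set $n = {\rm ord}(a)$ and recall from the definition of the $p^\nu$-cyclotomic class that $S_{p^\nu}(a) = \{\, (p^\nu)^i \cdot a \mid 0 \le i < {\rm ord}_n(p^\nu)\,\}$, where $(p^\nu)^i \cdot a$ denotes the $(p^\nu)^i$-fold sum of $a$ in $A$. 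Since $a$ has additive order exactly $n$, the cyclic subgroup $\langle a\rangle$ is isomorphic to $\mathbb{Z}_n$ with $a \leftrightarrow 1$, and under this identification $(p^\nu)^i \cdot a$ corresponds to $(p^\nu)^i \bmod n$ and $-a$ corresponds to $-1 \equiv n-1 \bmod n$. (Note $\gcd(p^\nu, n) = 1$ because $p \nmid |A|$ forces $p \nmid n$, so these powers make sense modulo $n$.)

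With this translation, $S_{p^\nu}(a)$ is of type $\I$, i.e. $S_{p^\nu}(-a) = S_{p^\nu}(a)$, exactly when $-a \in S_{p^\nu}(a)$ (the classes partition $\langle a \rangle \cap (\text{orbit})$, so two of them coincide iff they share an element); and by the identification this happens iff $(p^\nu)^k \equiv -1 \pmod{n}$ for some integer $k$ with $0 \le k < {\rm ord}_n(p^\nu)$. Dropping the range restriction changes nothing: if $(p^\nu)^k \equiv -1 \pmod n$ for some $k \ge 1$ then reducing $k$ modulo ${\rm ord}_n(p^\nu)$ gives a representative in the required range (and $k \equiv 0$ is impossible unless $n \le 2$, which I will handle directly — for $n = 1$, $\langle a\rangle$ is trivial, $-a = a$, and $1 \in G_{(p^\nu,1)}$ always; for $n = 2$, $-a = a$ and $2 \mid p^\nu + 1$ iff $p$ is odd, which matches $2 \in G_{(p^\nu,1)}$). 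Thus $S_{p^\nu}(a)$ is of type $\I$ iff there exists $k \ge 1$ with $n \mid (p^\nu)^k + 1$, which is exactly the definition of ${\rm ord}(a) = n \in G_{(p^\nu,1)}$.

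I do not anticipate a serious obstacle here; the content is genuinely just a reformulation. The one point requiring a little care is the claim that two distinct $p^\nu$-cyclotomic classes of $A$ are either equal or disjoint, so that $S_{p^\nu}(-a) = S_{p^\nu}(a)$ is equivalent to the membership $-a \in S_{p^\nu}(a)$; this follows because $b \mapsto p^\nu \cdot b$ permutes the (finite) set $A$ and the cyclotomic classes are its orbits, hence a partition of $A$. The other mild subtlety is keeping track of the degenerate cases $n \in \{1,2\}$, where $-a = a$ automatically, and checking these against the explicit membership of $1$ and $2$ in $G_{(p^\nu,1)}$ as recorded earlier in the paper. Everything else is bookkeeping. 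Since this is exactly Lemma~4.5 of \cite{JLLX2012}, I would in fact just cite that reference and sketch the above one-line argument for the reader's convenience.
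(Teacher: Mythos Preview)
Your argument is correct and is exactly the standard unwinding of definitions that one would expect; the paper itself does not prove this lemma at all but merely quotes it from \cite[Lemma 4.5]{JLLX2012}, so your final remark that you would ``just cite that reference and sketch the above one-line argument'' is precisely what the paper does (minus the sketch). One tiny polish: in your $n=2$ case you should make explicit that $2\mid |A|$ together with $p\nmid |A|$ forces $p$ odd, so the ``iff $p$ is odd'' is automatically satisfied and the equivalence holds unconditionally under the hypotheses.
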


\begin{lemma} \label{lemQR}  Let $p$ be  a prime and let $\nu$ be a positive integer.  Let $A$ be a finite abelian group of order $m$ and exponent $M$ with  $p\nmid M$. Then   
    \begin{align*} |Q_{p^\nu}(A)|= \sum_{d\mid M}\chi(d,p^\nu)\mathcal{N}_{A}(d),
    \end{align*}
    where  $\mathcal{N}_{A}(d)$ is the number of elements of order $d$ in $A$ determined in \cite{B1997}.
    
    In particular, $|Q_{p^{\nu}}(A)|=m$ if and only if $m\in G_{(p^\nu,1)}$.
\end{lemma}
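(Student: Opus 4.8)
The plan is to prove Lemma~\ref{lemQR} by decomposing the set $Q_{p^\nu}(A)$ according to the additive orders of its elements. First I would recall that every element $a \in A$ has a well-defined additive order ${\rm ord}(a)$, which is a divisor of the exponent $M$, and that $A$ is the disjoint union over $d \mid M$ of the sets $A_d := \{a \in A \mid {\rm ord}(a) = d\}$, with $|A_d| = \mathcal{N}_A(d)$. The key observation is that membership of $a$ in $Q_{p^\nu}(A)$ — that is, whether $-a \in S_{p^\nu}(a)$ — depends only on ${\rm ord}(a)$: indeed, $-a \in S_{p^\nu}(a)$ holds precisely when $-a = (p^\nu)^i \cdot a$ for some $i$, equivalently when $(p^\nu)^i \equiv -1 \pmod{{\rm ord}(a)}$ is solvable in $i$, which by the very definition of a good integer is exactly the condition ${\rm ord}(a) \in G_{(p^\nu,1)}$. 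This is precisely Lemma~\ref{propType}.

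Next I would assemble the count. Since $S_{p^\nu}(a) = S_{p^\nu}(-a)$ is a statement governed entirely by $d = {\rm ord}(a)$, for each divisor $d \mid M$ either \emph{every} element of order $d$ lies in $Q_{p^\nu}(A)$ (when $d \in G_{(p^\nu,1)}$, i.e. $\chi(d,p^\nu) = 1$) or \emph{no} element of order $d$ does (when $\chi(d,p^\nu) = 0$). Hence
\begin{align*}
|Q_{p^\nu}(A)| = \sum_{d \mid M} |A_d \cap Q_{p^\nu}(A)| = \sum_{d \mid M} \chi(d,p^\nu)\,\mathcal{N}_A(d).
\end{align*}
I note that the paper writes this with a product symbol $\prod$; that is presumably a typographical slip for $\sum$, and I would either correct it or, if forced to reconcile with the stated form, point out that the intended meaning is the sum just displayed (a genuine product of $\chi$-values over all divisors would collapse to $0$ as soon as one bad divisor appears, which is not what is wanted). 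Either way the substance is the sum formula above together with Lemma~\ref{propType}.

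For the final ``in particular'' claim, suppose first that $m = |A| \in G_{(p^\nu,1)}$. By Corollary~\ref{cor-evenly} (or directly from the characterization, since any divisor of a good integer is good — a fact that follows from Theorem~\ref{thmChar} or can be read off the prime-order condition in Lemma~\ref{goodP} combined with Proposition~\ref{odd-good}), every divisor $d$ of $m$ is also good, so $\chi(d,p^\nu) = 1$ for all $d \mid M$ (note $M \mid m$, so every $d \mid M$ divides $m$). Then $|Q_{p^\nu}(A)| = \sum_{d \mid M} \mathcal{N}_A(d) = |A| = m$. Conversely, if $|Q_{p^\nu}(A)| = m$, then since each term $\chi(d,p^\nu)\mathcal{N}_A(d) \le \mathcal{N}_A(d)$ and the full sum $\sum_{d\mid M}\mathcal{N}_A(d) = m$ is attained, we must have $\chi(d,p^\nu) = 1$ for every $d \mid M$ with $\mathcal{N}_A(d) > 0$, i.e. for every $d \mid M$; in particular $M \in G_{(p^\nu,1)}$. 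To upgrade this to $m \in G_{(p^\nu,1)}$ I would invoke the multiplicative structure: writing $m = 2^\beta d_0$ and $M = 2^{\beta'} d_0'$ with $d_0, d_0'$ odd, one checks that $m$ and $M$ have the same set of odd prime divisors and that the relevant $2$-adic and prime-order conditions in Theorem~\ref{thmChar} are inherited, so $M$ good forces $m$ good. The main obstacle here is precisely this last step — bridging from the exponent $M$ to the order $m$ — since $m$ can carry higher prime-power factors than $M$; I would handle it by checking that the characterization in Theorem~\ref{thmChar} depends only on the set of prime divisors of the odd part and on the $2$-part condition $2^\beta \mid (p^\nu + 1)$, both of which behave monotonically as one passes from $M$ to $m$, and by using that $A$ always contains an element of order $M$ while $\mathcal{N}_A(d) = 0$ for $d \mid m$, $d \nmid M$, so the divisors with $d \nmid M$ contribute nothing and impose no constraint beyond what $M$ already gives.
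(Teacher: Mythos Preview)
Your derivation of the displayed formula is correct and is exactly what the paper does: both reduce to Lemma~\ref{propType}, which says $a\in Q_{p^\nu}(A)$ if and only if ${\rm ord}(a)\in G_{(p^\nu,1)}$, and then partition $A$ by element order. Your remark that the $\prod$ in the statement should read $\sum$ is also correct (and is confirmed by the first line of the proof of Proposition~\ref{propFactor}).

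The forward direction of the ``in particular'' is fine. But your converse argument has a genuine gap, precisely at the point you flag as ``the main obstacle'': the $2$-part condition does \emph{not} behave monotonically in the direction you need. Passing from the exponent $M$ to the order $m$ can strictly increase the power of $2$, and the requirement $2^\beta\mid(p^\nu+1)$ becomes \emph{harder}, not easier, as $\beta$ grows. In fact the converse as stated is false: take $p=3$, $\nu=1$, and $A=(\mathbb{Z}_2)^3$. Then $m=8$, $M=2$, and since every $a\in A$ satisfies $-a=a=3\cdot a$ we get $Q_3(A)=A$, so $|Q_3(A)|=8=m$; yet $3^k+1\equiv 2$ or $4\pmod 8$ for all $k\ge 1$, so $8\notin G_{(3,1)}$. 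The correct equivalence is $|Q_{p^\nu}(A)|=m$ if and only if the exponent $M$ lies in $G_{(p^\nu,1)}$ (equivalently, every $d\mid M$ is good), and your argument already proves that version cleanly. The paper's one-line proof does not address this point.
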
 
\begin{proof} The statements follow immediately from  Lemma \ref{propType}.
\end{proof}

For each integer $\alpha\geq 0$, let  $\mathcal{P}_{p^\nu,\alpha}$ denote the set of odd primes $t$ such that $2^\alpha || {\rm ord}_{t}(p^\nu)$. For a subset $T$ of $\mathbb{N}$, denote by  $\langle\langle T \rangle\rangle$   the multiplicative semigroup generated by $T$.

For each positive integer  $m$, let $\beta\geq 0$ be the integer such that $2^\beta|| m$. Then  $m=2^\beta m^\prime$ for some odd positive integer $m^\prime$.   Since $\{\langle\langle \mathcal{P}_{p^\nu,\alpha}\rangle\rangle: \alpha \in \mathbb{N}\cup \{0\} \}$ forms a partition of the set of odd positive integers,     $m=2^\beta m^\prime$  can be   represented  as 
\begin{align*}
m=2^\beta m_0m_1m_2m_3 \dots,
\end{align*}
where $m_\alpha\in \langle\langle \mathcal{P}_{p^\nu,\alpha}\rangle\rangle$ for all $\alpha \in \mathbb{N}\cup \{0\} $ and  $m_\alpha=1$ for all but finitely many  elements $\alpha$.

\begin{proposition} \label{propFactor} Let $p$ be  a prime and let $\nu$ be a positive integer. 
    Let $A$ be a finite abelian group of order  	$m=2^\beta m_0 m_1m_2m_3 \dots$ and exponent $M=2^\mathcal{B} M_0 M_1M_2M_3 \dots $, where $m_\alpha$ and $M_\alpha$  are in   $\langle\langle \mathcal{P}_{p^\nu,\alpha}\rangle\rangle$ for all $\alpha\geq 0$. Let $\gamma\ge 0$ be  an integer such that  $2^\gamma || (p^\nu+1) $. Then  
    \begin{align}\label{QqA}
    |Q_{p^\nu}(A)|= m_1 \sum_{i=0}^\gamma \mathcal{N}_A(2^i)+(1+\mathcal{N}_A(2))^{\min\{1, \beta\}}\sum_{\alpha\geq 2}  \left(m_\alpha  - 1  \right).
    \end{align}
    (Note that $\mathcal{N}_A(2^i)$ will be regarded as $0$ if $i>\mathcal{B}$.)
\end{proposition}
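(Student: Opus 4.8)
The goal is to evaluate $|Q_{p^\nu}(A)|$, which by Lemma~\ref{lemQR} equals $\sum_{d\mid M}\chi(d,p^\nu)\mathcal{N}_A(d)$, i.e.\ the sum of $\mathcal{N}_A(d)$ over all divisors $d$ of the exponent $M$ that are good with respect to $(p^\nu,1)$. So the plan is to run through the characterization of good integers (Theorem~\ref{thmChar}, together with Corollary~\ref{cor:good-gamma}), applied to the base pair $(a,b)=(p^\nu,1)$, and to identify exactly which divisors $d=2^\beta' d'$ of $M$ (with $d'$ odd) are good, then group the resulting $\mathcal{N}_A(d)$-contributions to match the two sums on the right-hand side of \eqref{QqA}. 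Throughout, note ${\rm ord}_\ell(\tfrac{p^\nu}{1})={\rm ord}_\ell(p^\nu)$, and the quantity $2^\gamma\,\|\,(p^\nu+1)$ plays the role of the ``$2^\gamma\|(a+b)$'' appearing in Corollary~\ref{cor:good-gamma}.

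\textbf{Step 1: the odd part.} First I would classify the odd divisors $d'\mid M_0M_1M_2M_3\cdots$ that are good. By Lemma~\ref{goodP}, an odd $d'>1$ is good iff there is a single $s\ge 1$ with $2^s\|{\rm ord}_p(p^\nu)$ for every prime $p\mid d'$; equivalently, all prime divisors of $d'$ lie in a single $\mathcal{P}_{p^\nu,s}$ for some fixed $s\ge 1$, i.e.\ $d'\in\langle\langle\mathcal{P}_{p^\nu,s}\rangle\rangle$ for some $s\ge 1$ (primes in $\mathcal{P}_{p^\nu,0}$, where ${\rm ord}_p(p^\nu)$ is odd, can never divide a good integer). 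Hence the good odd divisors of $M$ are exactly $d'=1$ together with the divisors $>1$ of $M_1$, of $M_2$, of $M_3$, \dots\ taken one $M_\alpha$ at a time. The contribution of these to $|Q_{p^\nu}(A)|$, ignoring powers of $2$ for the moment, is $\mathcal{N}_A(1)=1$ plus $\sum_{d'\mid M_1, d'>1}\mathcal{N}_A(d')+\sum_{\alpha\ge 2}\sum_{d'\mid M_\alpha,d'>1}\mathcal{N}_A(d')$. Using multiplicativity of $\mathcal{N}_A$ on coprime arguments and the standard identity $\sum_{d'\mid M_\alpha}\mathcal{N}_A(d')=m_\alpha$ (the number of elements of $A$ whose order divides $M_\alpha$ equals the order of the corresponding Sylow-type subgroup), one gets $\sum_{d'\mid M_\alpha, d'>1}\mathcal{N}_A(d')=m_\alpha-1$, which is where the $\sum_{\alpha\ge 2}(m_\alpha-1)$ term comes from; the $\alpha=1$ block is treated separately below because it is the one that may be multiplied by $2^\gamma$-many power-of-two factors.

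\textbf{Step 2: attaching powers of $2$.} Now for each good odd $d'$ I would determine which $2^{\beta'}d'$ (with $0\le\beta'\le\mathcal{B}$, i.e.\ $2^{\beta'}d'\mid M$) are good. Apply Theorem~\ref{thmChar}(1) with $a=p^\nu$, $b=1$ (so $ab=p^\nu$ is odd): a good integer $2^{\beta'}d'$ requires either $\beta'\in\{0,1\}$, or $\beta'\ge 2$ together with ${\rm ord}_{2^{\beta'}}(p^\nu)=2$ and (if $d'>1$) $2\|{\rm ord}_{d'}(p^\nu)$. By Proposition~\ref{evengood}, ${\rm ord}_{2^{\beta'}}(p^\nu)=2$ is equivalent to $2^{\beta'}\mid(p^\nu+1)$, i.e.\ $\beta'\le\gamma$. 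The constraint $2\|{\rm ord}_{d'}(p^\nu)$ forces $d'\in\langle\langle\mathcal{P}_{p^\nu,1}\rangle\rangle$, i.e.\ $d'\mid M_1$. So: (a) when $d'=1$, the good values $2^{\beta'}$ are $\beta'\in\{0,1\}$ always, plus $2\le\beta'\le\gamma$ — i.e.\ $0\le\beta'\le\max\{1,\gamma\}$; but since $\mathcal{N}_A(2^i)=0$ for $i>\mathcal{B}$ and $\mathcal{N}_A(1)=1$, the total $2$-power contribution with $d'=1$ is $\sum_{i=0}^{\gamma}\mathcal{N}_A(2^i)$ (the $i=1$ term being included whether or not $\gamma\ge 1$, which is consistent because if $\gamma=0$ then $p^\nu\equiv 1\bmod 2$ forces... — here one checks the small-$\gamma$ bookkeeping carefully). (b) when $d'>1$ divides $M_1$, the good $\beta'$ range over $0\le\beta'\le\gamma$, giving $\big(\sum_{i=0}^{\gamma}\mathcal{N}_A(2^i)\big)\cdot$ wait — rather, by multiplicativity the contribution is $\big(\sum_{d'\mid M_1,d'>1}\mathcal{N}_A(d')\big)\cdot\big(\sum_{\beta'=0}^{\gamma}\mathcal{N}_A(2^{\beta'})/\mathcal{N}_A(1)\big)$; combined with (a) this assembles into $\big(\sum_{d'\mid M_1}\mathcal{N}_A(d')\big)\big(\sum_{i=0}^\gamma\mathcal{N}_A(2^i)\big)$, and since $\sum_{d'\mid M_1}\mathcal{N}_A(d')=m_1$ this is exactly $m_1\sum_{i=0}^\gamma\mathcal{N}_A(2^i)$. (c) when $d'>1$ divides some $M_\alpha$ with $\alpha\ge 2$, only $\beta'\in\{0,1\}$ is allowed, contributing $(1+\mathcal{N}_A(2))\sum_{d'\mid M_\alpha,d'>1}\mathcal{N}_A(d')=(1+\mathcal{N}_A(2))(m_\alpha-1)$; and the factor $(1+\mathcal{N}_A(2))$ collapses to $(1+\mathcal{N}_A(2))^{\min\{1,\beta\}}$ since if $\beta=0$ then $A$ has no element of order $2$, $\mathcal{N}_A(2)=0$, and only $\beta'=0$ occurs.

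\textbf{Main obstacle.} The routine part is the $\mathcal{N}_A$-summation identities (multiplicativity and $\sum_{d\mid N}\mathcal{N}_A(d)$ counting the $N$-torsion subgroup). The delicate point — the part I expect to need the most care — is the boundary bookkeeping in the powers of $2$: reconciling the ``$\beta'\in\{0,1\}$ is always allowed'' clause of Theorem~\ref{thmChar} with the ``$2^{\beta'}\mid(p^\nu+1)$'' clause, so that the $2$-power sums genuinely telescope into $\sum_{i=0}^\gamma\mathcal{N}_A(2^i)$ for the $M_1$-block and into $(1+\mathcal{N}_A(2))^{\min\{1,\beta\}}$ for the $M_\alpha$-blocks ($\alpha\ge 2$), in all the edge cases $\gamma=0$, $\gamma=1$, $\beta=0$, $\mathcal{B}<\gamma$, etc. One must also confirm that a good $2^{\beta'}d'$ with $\beta'\ge 2$ and $d'>1$ cannot sneak into an $M_\alpha$-block with $\alpha\ge 2$ — which is exactly the force of the condition $2\|{\rm ord}_{d'}(p^\nu)$ in Theorem~\ref{thmChar}(1)(d), ruling out $\alpha\ne 1$. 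Once those cases are checked, summing contributions (a)+(b)+(c) yields \eqref{QqA}.
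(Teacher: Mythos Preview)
Your approach is correct and essentially the same as the paper's: both start from Lemma~\ref{lemQR}, invoke the characterization of good integers (Theorem~\ref{thmChar}/Corollary~\ref{cor:good-gamma}) to identify the good divisors of $M$, and then group the $\mathcal{N}_A$-contributions by the $M_\alpha$-block of the odd part and the admissible $2$-power factor. The one organizational difference is that the paper handles your ``delicate boundary bookkeeping'' by a clean dichotomy on $\mathcal{B}=0$ versus $\mathcal{B}\ne 0$ (equivalently $\beta=0$ versus $\beta\ne 0$), which immediately disposes of the edge cases you flag: when $\mathcal{B}=0$ there are no $2$-power factors at all and the formula collapses to $m_1+\sum_{\alpha\ge 2}(m_\alpha-1)$; when $\mathcal{B}\ge 1$ one has $p$ odd, hence $\gamma\ge 1$, and the ranges $\{0,1\}$ and $\{2,\dots,\gamma\}$ merge into $\{0,\dots,\gamma\}$ without the $\max\{1,\gamma\}$ ambiguity you encountered.
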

\begin{proof}  
    Observe  that $M^\prime| m^\prime$ and  $M_\alpha|m_\alpha$  for all $\alpha\geq 0$ and $\beta\geq \mathcal{B}$.  
    
    \noindent  {\bf Case 1}  $\mathcal{B}\ne 0$. Then $\beta \ne 0$. By Lemma \ref{lemQR} and Corollary \ref{cor:good-gamma}, we have 
    \begin{align*}
    |Q_{p^\nu}(A)|&= \sum_{d\mid M}\chi(d,p^\nu)\mathcal{N}_{A}(d)\notag\\
    &= \sum_{d\mid M, d\in G(p^\nu,1)} \mathcal{N}_{A}(d)\notag\\
    &= \mathcal{N}_{A}(1)+\mathcal{N}_{A}(2) 
    +\sum_{d\mid 2^{\min\{\mathcal{B},\gamma\}}M_1, d\notin\{1,2\}} \mathcal{N}_{A}(d) 
    \\&~~~~   +\sum_{\alpha \geq 2}\sum_{d\mid 2M_\alpha, d\notin\{1,2\}} \mathcal{N}_{A}(d)\notag\\ 
    &= 1+\mathcal{N}_{A}(2) 
    + \left( \min\{2^{\beta}, \sum_{i=0}^\gamma \mathcal{N}_A(2^i)\}m_1-1-\mathcal{N}_{A}(2) \right)     \notag\\
    &~~~~   +\sum_{\alpha\geq 2}  ({   (1+\mathcal{N}_A(2))m_\alpha}  -1-\mathcal{N}_{A}(2) )  \notag\\
    &= 	m_1 \sum_{i=0}^\gamma \mathcal{N}_A(2^i)+\sum_{\alpha\geq 2} \left (\left(1+\mathcal{N}_{A}(2) \right)m_\alpha-1-\mathcal{N}_{A}(2) \right)   \notag\\
    &= 	 m_1 \sum_{i=0}^\gamma \mathcal{N}_A(2^i) +(1+\mathcal{N}_{A}(2))\sum_{\alpha\geq 2}  (m_\alpha  - 1  ),
    \end{align*}
    
    \noindent  {\bf Case 2}   $\mathcal{B}=0$. Then $\beta=0$.  By Lemma \ref{lemQR}, we have 
    
    \begin{align*}
    |Q_{p^\nu}(A)|&= \sum_{d\mid M}\chi(d,p^\nu)\mathcal{N}_{A}(d)\notag\\
    &= \sum_{d\mid M, d\in G(p^\nu,1)} \mathcal{N}_{A}(d)\notag\\
    &= \mathcal{N}_{A}(1)+\sum_{\alpha\geq 1}\sum_{d\mid M_\alpha,\, d> 1} \mathcal{N}_{A}(d)\notag\\
    &= \mathcal{N}_{A}(1)+\sum_{\alpha\geq 1}(m_\alpha-1)\notag\\
    &= m_1+\sum_{\alpha \geq 2}(m_\alpha-1).
    \end{align*}

    Combining the two cases, we conclude the proposition.
\end{proof}
\begin{remark}\label{remNA2}
    From Proposition \ref{propFactor}, we have the following observations. 
    \begin{enumerate}
        \item  If $A=\mathbb{Z}_{2^\beta}\times H$ is  a      group of order $m$, then 
        
        \begin{align*}\sum_{i=0}^\gamma \mathcal{N}_A(2^i) &=\begin{cases}
        2^\beta  & \text{ if } \beta\leq \gamma \\
        2^\gamma  &  \text{ if }  \gamma \leq \beta
        \end{cases}\\ &
        =2^{\min\{\gamma, \beta\}}
        \end{align*}
        which coincides with \cite[Proposition 24]{S2003}.
        \item  If $A=(\mathbb{Z}_2)^{\beta}\times H$ is  a      group of order $m$, then 
        
        \begin{align*}\sum_{i=0}^\gamma \mathcal{N}_A(2^i) &=\begin{cases}
        2^\beta & \text{ if }  \gamma\geq 1 \\
        1  &  \text{ if }  \gamma =0
        \end{cases}\\
        &
        =2^{\min\{\beta,\gamma\beta\}}.
        \end{align*}                                                                                                                                                                                                                                                          
    \end{enumerate}
\end{remark}

For a prime $q$, let $A_1 $ and $A_2$ be a finite  abelian $q$-groups of the same order $q^r$ with primary decompositions $A_1=\prod\limits_{i=1}^s\mathbb{Z}_{q^{a_i}} $ and  $A_2=\prod\limits_{i=1}^t\mathbb{Z}_{q^{b_i}} $.  The group $A_1$ is said to be {\em finer} than $ A_2$, denoted by $A_1 \preceq A_2$ , if  there exist a sequence $s_0=0< s_1<s_2<\dots<s_t=t$ and a permutation  $\rho$ on  $\{1,2,\dots,s\}$ such that  $\left(\sum\limits_{j=s_{i-1}+1}^{s_i} \rho(a_j)\right)
=b_i$ for all $i=1,2,\dots, t$.        Note that $\mathbb{Z}_{q}^r \preceq A  \preceq \mathbb{Z}_{q^r}$ for all $q$-groups $A$ of size $q^r$.

Using this concept, we have a sufficient condition to compare the values   $|Q_{p^{\nu}}(A)|$ for some finite abelian   groups $A$ of the same size  in terms of  their Sylow $2$-subgroups.
\begin{lemma}\label{corFinerE}  Let $p$ be a prime and let $\nu$ be a positive integer.  Let $A $  and $B$ be finite  abelian groups of the same order $m$ with $p\nmid m$.   If  the Sylow $2$-subgroup of    $A$   is finer than the   Sylow $2$-subgroup of  $ B$, then 
    \[ |Q_{p^{\nu}}(A)|\geq  |Q_{p^{\nu}}(B)|.\] 
    In particular,  if the   $2$-subgroup of  $A$ and   the   $2$-subgroup of  $B$  are isomorphic,  then  $ |Q_{p^{\nu}}(A)|=|Q_{p^{\nu}}(B)|$.
\end{lemma}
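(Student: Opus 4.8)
The plan is to deduce the statement directly from the closed formula for $|Q_{p^\nu}(A)|$ in Proposition~\ref{propFactor}, by isolating the only two quantities in that formula that depend on the group structure rather than on the order alone. Fix the factorization $m=2^\beta m_0m_1m_2m_3\cdots$ from Proposition~\ref{propFactor}, and let $\gamma\ge 0$ be the integer with $2^\gamma||(p^\nu+1)$; all of these data are attached to $m$ and to $p^\nu$ only, hence are common to $A$ and $B$ since $|A|=|B|=m$. By Proposition~\ref{propFactor},
\begin{align*}
|Q_{p^\nu}(A)|=m_1\sum_{i=0}^{\gamma}\mathcal{N}_A(2^i)+\bigl(1+\mathcal{N}_A(2)\bigr)^{\min\{1,\beta\}}\sum_{\alpha\ge 2}(m_\alpha-1),
\end{align*}
and the same identity holds with $B$ in place of $A$. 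Since $m_1\ge 1$, since $\sum_{\alpha\ge 2}(m_\alpha-1)\ge 0$, and since the exponent $\min\{1,\beta\}$ is the same for both groups, it suffices to establish the two monotonicity estimates
\[
\sum_{i=0}^{\gamma}\mathcal{N}_A(2^i)\ \ge\ \sum_{i=0}^{\gamma}\mathcal{N}_B(2^i)
\qquad\text{and}\qquad
\mathcal{N}_A(2)\ \ge\ \mathcal{N}_B(2);
\]
then $|Q_{p^\nu}(A)|\ge|Q_{p^\nu}(B)|$ follows summand by summand, using in addition $1+\mathcal{N}_A(2)\ge 1+\mathcal{N}_B(2)$ when $\beta\ge 1$.

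To prove these estimates, first note that both quantities depend only on the Sylow $2$-subgroup, since every element of order $2^i$ with $i\ge 1$ lies there and $\mathcal{N}_\bullet(1)=1$. Write the primary decompositions of the Sylow $2$-subgroups as $A_2=\prod_{i=1}^{s}\mathbb{Z}_{2^{a_i}}$ and $B_2=\prod_{j=1}^{t}\mathbb{Z}_{2^{b_j}}$. An element killed by $2$ corresponds to a choice of $0$ or $2^{a_i-1}$ in each factor, so $\mathcal{N}_A(2)=2^{s}-1$ and $\mathcal{N}_B(2)=2^{t}-1$; similarly the number of elements of $A_2$ killed by $2^\gamma$ equals $\prod_{i}2^{\min\{a_i,\gamma\}}=2^{\sum_i\min\{a_i,\gamma\}}$, which is exactly $\sum_{i=0}^{\gamma}\mathcal{N}_A(2^i)$ (in agreement with the convention of Proposition~\ref{propFactor} that $\mathcal{N}_A(2^i)=0$ once $2^i$ exceeds the exponent), and likewise for $B_2$. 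The hypothesis $A_2\preceq B_2$ now furnishes a partition $\{1,\dots,s\}=\bigsqcup_{j=1}^{t}I_j$, with $I_j$ the $j$-th block of indices in the definition of $\preceq$, such that $b_j=\sum_{i\in I_j}a_i$ for every $j$. In particular $t\le s$, so $\mathcal{N}_B(2)=2^{t}-1\le 2^{s}-1=\mathcal{N}_A(2)$; and since $x\mapsto\min\{x,\gamma\}$ is subadditive on the nonnegative integers, $\min\{b_j,\gamma\}=\min\bigl\{\sum_{i\in I_j}a_i,\gamma\bigr\}\le\sum_{i\in I_j}\min\{a_i,\gamma\}$, and summing over $j$ gives $\sum_j\min\{b_j,\gamma\}\le\sum_i\min\{a_i,\gamma\}$, that is, $\sum_{i=0}^{\gamma}\mathcal{N}_B(2^i)\le\sum_{i=0}^{\gamma}\mathcal{N}_A(2^i)$.

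Substituting the two estimates into the formula yields $|Q_{p^\nu}(A)|\ge|Q_{p^\nu}(B)|$. For the last assertion, if $A_2\cong B_2$ then $\mathcal{N}_A(2^i)=\mathcal{N}_B(2^i)$ for all $i\ge 0$, so Proposition~\ref{propFactor} gives $|Q_{p^\nu}(A)|=|Q_{p^\nu}(B)|$ at once; equivalently, one applies the inequality already proved to both $A_2\preceq B_2$ and $B_2\preceq A_2$. I do not expect a genuine obstacle here: the only care needed is the bookkeeping with Proposition~\ref{propFactor} --- confirming that $\mathcal{N}_\bullet(2)$ and $\sum_{i=0}^{\gamma}\mathcal{N}_\bullet(2^i)$ are its only group-dependent ingredients and that they enter with nonnegative coefficients --- plus the elementary subadditivity of $\min\{\cdot,\gamma\}$ and the count of cyclic $2$-parts; the degenerate case $p=2$, which forces $m$ odd and hence $\beta=\gamma=0$ and $|Q_{2^\nu}(A)|=|Q_{2^\nu}(B)|$, is covered by the general argument.
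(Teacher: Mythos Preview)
Your proof is correct and follows the same overall strategy as the paper: invoke Proposition~\ref{propFactor}, observe that only $\sum_{i=0}^{\gamma}\mathcal{N}_\bullet(2^i)$ and $\mathcal{N}_\bullet(2)$ depend on the group structure, and reduce to the two monotonicity estimates. The difference lies in how the key inequality $\sum_{i=0}^{\gamma}\mathcal{N}_A(2^i)\ge\sum_{i=0}^{\gamma}\mathcal{N}_B(2^i)$ is justified. You identify this sum as the number of elements killed by $2^\gamma$, compute it as $2^{\sum_i\min\{a_i,\gamma\}}$, and then use subadditivity of $x\mapsto\min\{x,\gamma\}$ on the blocks $I_j$. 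The paper instead asserts the termwise inequality $\mathcal{N}_A(2^i)\ge\mathcal{N}_B(2^i)$ for every $i\le\mathcal{B}_A$; but this stronger claim is false in general (take $A_2=\mathbb{Z}_2\times\mathbb{Z}_2\times\mathbb{Z}_4\preceq\mathbb{Z}_4\times\mathbb{Z}_4=B_2$ and $i=2$, where $\mathcal{N}_A(4)=8<12=\mathcal{N}_B(4)$). Your subadditivity argument therefore both sharpens and repairs the paper's step, while the rest of the reduction is identical.
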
 
\begin{proof} 
    From Proposition \ref{propFactor},   it suffices to show that  
    \begin{align}\label{finer1}
    \sum_{i=0}^\gamma \mathcal{N}_A(2^i) \geq \sum_{i=0}^\gamma \mathcal{N}_{B}(2^i) 
    \end{align}  and 
    \begin{align}\label{finer2}
    (1+\mathcal{N}_A(2))^{\min\{1, \beta\}} \geq (1+\mathcal{N}_{B}(2))^{\min\{1, \beta\}}.
    \end{align}

    Let $\mathcal{B}_A$ be the exponent of the Sylow $2$-subgroup of $A$. Then $\mathcal{B}_A$  is less than or equal to  the exponent of the Sylow $2$-subgroup of $B$. Hence,  $\mathcal{N}_A(2^i )\geq \mathcal{N}_{B}(2^i )$ for all $i=0,1,\dots, \mathcal{B}_A$. If $\gamma\leq \mathcal{B}_A$, we are done. Assume that $\mathcal{B}_A<\gamma$. Then 
    \[\sum_{i=0}^\gamma \mathcal{N}_A(2^i) = \sum_{i=0}^{\mathcal{B}_A} \mathcal{N}_A(2^i)\geq \sum_{i=0}^\gamma \mathcal{N}_B(2^i) .\]
    Therefore, the results follow.
\end{proof}

\begin{corollary}  \label{corBoundQ}  Let $p$ be  a prime and let $\nu$ be a positive integer.  Let $A$ be a finite abelian group of order  	$m=2^\beta m_0 m_1m_2m_3 \dots $, where $m_\alpha\in \langle\langle \mathcal{P}_{p^\nu,\alpha}\rangle\rangle$ for all $\alpha\geq 0$. Let $\gamma\ge 0$ be  an integer such that  $2^\gamma || (p^\nu+1) $.  Then we have 
    \begin{align*}
    2^{\min\{\beta,\gamma\}}m_1  +2^{\min\{1, \beta\}}\sum_{\alpha\geq 2}  \left(m_\alpha  - 1  \right)  & \leq |Q_{p^{\nu}}(A)|\\
    &\leq 2^{\min\{\beta,\gamma\beta\}}m_1  +2^{\min\{1, \beta\}}\sum_{\alpha\geq 2}  \left(m_\alpha  - 1  \right) .
    \end{align*}
    
\end{corollary}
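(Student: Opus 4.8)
The plan is to sandwich $A$ between the two abelian groups of the \emph{same} order $m$ whose Sylow $2$-subgroups are the ``thinnest'' and the ``thickest'', and then to read the two bounds off the exact count of Proposition~\ref{propFactor} evaluated at these extremes. Write $A\cong S\times O$, where $S$ is the Sylow $2$-subgroup of $A$ (so $|S|=2^{\beta}$) and $O$ is the complementary subgroup, of odd order $m/2^{\beta}=m_{0}m_{1}m_{2}\cdots$. Put $A_{\min}:=\mathbb{Z}_{2^{\beta}}\times O$ and $A_{\max}:=(\mathbb{Z}_{2})^{\beta}\times O$, both of order $m$. By the observation $\mathbb{Z}_{q}^{r}\preceq B\preceq\mathbb{Z}_{q^{r}}$ recorded just before Lemma~\ref{corFinerE}, the Sylow $2$-subgroup of $A_{\max}$ is finer than that of $A$, which in turn is finer than that of $A_{\min}$; hence Lemma~\ref{corFinerE} gives
\[
|Q_{p^{\nu}}(A_{\min})|\ \le\ |Q_{p^{\nu}}(A)|\ \le\ |Q_{p^{\nu}}(A_{\max})| .
\]

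It then remains only to compute the two endpoints from Proposition~\ref{propFactor}, which for any group $B$ of order $m$ reads
\[
|Q_{p^{\nu}}(B)| = m_{1}\sum_{i=0}^{\gamma}\mathcal{N}_{B}(2^{i}) + \bigl(1+\mathcal{N}_{B}(2)\bigr)^{\min\{1,\beta\}}\sum_{\alpha\ge 2}(m_{\alpha}-1),
\]
and both group-dependent factors on the right depend only on the Sylow $2$-subgroup of $B$. For $B=A_{\min}$, Remark~\ref{remNA2}(1) gives $\sum_{i=0}^{\gamma}\mathcal{N}_{B}(2^{i})=2^{\min\{\beta,\gamma\}}$, while $\mathbb{Z}_{2^{\beta}}$ has at most one element of order $2$, so $\mathcal{N}_{B}(2)\in\{0,1\}$ and $\bigl(1+\mathcal{N}_{B}(2)\bigr)^{\min\{1,\beta\}}=2^{\min\{1,\beta\}}$; substituting yields the lower bound. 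For $B=A_{\max}$, Remark~\ref{remNA2}(2) gives $\sum_{i=0}^{\gamma}\mathcal{N}_{B}(2^{i})=2^{\min\{\beta,\gamma\beta\}}$, and every nontrivial element of $(\mathbb{Z}_{2})^{\beta}$ has order $2$, so $\mathcal{N}_{B}(2)=2^{\beta}-1$; substituting this, together with the first term $2^{\min\{\beta,\gamma\beta\}}m_{1}$, yields the upper bound.

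The step demanding the most care is checking that these two substitutions do reproduce the displayed bounds, that is, the bookkeeping of the $\min$-expressions in the corner cases. When $\beta=0$ the group $A$ has odd order, so $\sum_{i=0}^{\gamma}\mathcal{N}_{A}(2^{i})=\mathcal{N}_{A}(1)=1$ and the formula collapses to $|Q_{p^{\nu}}(A)|=m_{1}+\sum_{\alpha\ge 2}(m_{\alpha}-1)$, with the two bounds equal to this common value; when $\gamma=0$ one has $2^{\min\{\beta,\gamma\beta\}}=1=2^{\min\{\beta,\gamma\}}$, so the estimate on the first summand is already tight. Apart from this routine verification, there is no obstacle of substance: the only genuine input is the monotonicity of $|Q_{p^{\nu}}(\,\cdot\,)|$ in the Sylow $2$-subgroup provided by Lemma~\ref{corFinerE}, whose applicability here rests only on the already-noted facts that $A_{\min}$, $A$ and $A_{\max}$ all have order $m$ and that $(\mathbb{Z}_{2})^{\beta}\preceq S\preceq\mathbb{Z}_{2^{\beta}}$.
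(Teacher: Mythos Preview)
Your approach is exactly that of the paper: sandwich the Sylow $2$-subgroup of $A$ between $(\mathbb{Z}_2)^{\beta}$ and $\mathbb{Z}_{2^{\beta}}$, apply Lemma~\ref{corFinerE} to the resulting groups of order $m$, and read off the two endpoints via Proposition~\ref{propFactor} and Remark~\ref{remNA2}. The paper's proof is the same argument in one sentence.

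One arithmetic point you glossed over in the last step: for $B=A_{\max}=(\mathbb{Z}_2)^{\beta}\times O$ with $\beta\ge 1$, substituting $\mathcal{N}_{B}(2)=2^{\beta}-1$ gives $(1+\mathcal{N}_{B}(2))^{\min\{1,\beta\}}=2^{\beta}$, not $2^{\min\{1,\beta\}}$. Thus the sandwich argument actually produces $2^{\beta}\sum_{\alpha\ge 2}(m_{\alpha}-1)$ as the second summand of the upper bound, which for $\beta\ge 2$ exceeds the printed $2^{\min\{1,\beta\}}\sum_{\alpha\ge 2}(m_{\alpha}-1)$. This is a discrepancy with the stated corollary (shared by the paper's own proof, which is equally terse) rather than a flaw in your strategy; your claim that the substitution ``yields the upper bound'' should be qualified accordingly.
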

\begin{proof}
    First, we note that    $(\mathbb{Z}_2)^\beta \preceq B \preceq   \mathbb{Z}_{2^\beta }$ for every abelian $2$-group $B$ of size $2^\beta$. The result is therefore follows from    Remark \ref{remNA2} and Corollary \ref{corFinerE}.
\end{proof}

From  Theorem \ref{thmAVG}, Proposition   \ref{propFactor}, and Corollary \ref{corBoundQ},  we have the following remark.

\begin{remark}\label{sumAVGE}  Let $p$ be  a prime and let $\nu$ be a positive integer. Let $A$ be a finite abelian group of order $m$ such that $p\nmid m$.    Then we have the following observations

    \begin{enumerate}
        \item     The value  $ {\rm avg}_{p^\nu}^{\rm E}(A\times\mathbb{Z}_{p^k})$ can be determined by substituting  the value of $|Q_{p^{\nu}}(A)|$ from  Proposition   \ref{propFactor} in to Theorem \ref{thmAVG}.
        
        \item Some lower and upper bounds  of  $ {\rm avg}_{p^\nu}^{\rm E}(A\times\mathbb{Z}_{p^k})$ can be computed by substituting  the bounds  of $|Q_{p^{\nu}}(A)|$ from  Corollary \ref{corBoundQ}  in to Theorem \ref{thmAVG}.
        
        \item  If the Sylow $2$-subgroup of $A$ is trivial (or equivalently, $m$ is odd), then   $   {\rm avg}_{p^{\nu}}^{\rm E}(A\times \mathbb{Z}_{p^k})$  is independent of $A$. Precisely,   $   {\rm avg}_{p^{\nu}}^{\rm E}(A\times \mathbb{Z}_{p^k})$ depends only on the cardinality  $m$ of $A$.

    \end{enumerate}

\end{remark}

Using Theorem \ref{thmAVG}, Corollary \ref{corBoundQ}, and the arguments similar to those in the proof of  \cite[Theorem 25]{S2003},  we conclude the following bounds.

\begin{corollary}\label{corBoundE}
    Let $p$ be  a prime and let $\nu$ be a positive integer. Let $A$ be a finite abelian group of order $m$ such that $p\nmid m$. Then     one of the following statements holds.
    
    \begin{enumerate}
        \item  $ {\rm avg}_{p^\nu}^{\rm E}(A\times\mathbb{Z}_{p^k})=0$ if and only if  $k=0$ and $m\in G_{(p^\nu,1)}$.
        \item  If $k>0$ or $m\notin G_{(p^\nu,1)}$, then  $ \frac{mp^k}{12} \leq   {\rm avg}_{p^\nu}^{\rm E}(A\times\mathbb{Z}_{p^k})< \frac{mp^k}{3} $.
    \end{enumerate}
\end{corollary}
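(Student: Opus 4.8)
The plan is to push everything through the closed formula of Theorem \ref{thmAVG}, using only the trivial bound $|Q_{p^\nu}(A)|\le |A|=m$ for the case $k\ge 1$ and the sharper estimates of Corollary \ref{corBoundQ} for the case $k=0$. Writing $P=p^k$ and putting the expression of Theorem \ref{thmAVG} over the common denominator $12(P+1)$ gives
\[
{\rm avg}_{p^\nu}^{\rm E}(A\times\mathbb{Z}_{p^k})=\frac{2mP(2P+1)-|Q_{p^\nu}(A)|\bigl((P+1)^2+2-3\delta_p\bigr)}{12(P+1)}.
\]
From this, the desired lower bound ${\rm avg}_{p^\nu}^{\rm E}(A\times\mathbb{Z}_{p^k})\ge \tfrac{mP}{12}$ is equivalent to the single inequality $mP(3P+1)\ge |Q_{p^\nu}(A)|\bigl((P+1)^2+2-3\delta_p\bigr)$; the upper bound ${\rm avg}<\tfrac{mp^k}{3}$ is already Corollary \ref{corBoundE1}(1), so I would cite it and move on.

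For $k\ge 1$, so $P\ge 2$, I would just use $|Q_{p^\nu}(A)|\le m$, which reduces the required inequality to $P(3P+1)\ge (P+1)^2+2-3\delta_p$, i.e.\ $2P^2-P-3+3\delta_p\ge 0$; the left-hand side is increasing in $P$ and already equals $3$ at $P=2$ with $\delta_p=0$, so this is immediate, and it even shows ${\rm avg}>0$ for all $k\ge 1$. This settles part (2) when $k\ge1$, and it also disposes of one implication of part (1): for the remaining implications, the ``if'' direction of part (1) follows from Corollary \ref{corBoundE1}(2), since $k=0$ together with $m\in G_{(p^\nu,1)}$ gives $|Q_{p^\nu}(A)|=m$ by Lemma \ref{lemQR}, hence ${\rm avg}_{p^\nu}^{\rm E}(A)=\tfrac{m-|Q_{p^\nu}(A)|}{4}=0$; conversely, if ${\rm avg}=0$ then $k\ge1$ is impossible by the above, so $k=0$, and then $\tfrac{m-|Q_{p^\nu}(A)|}{4}=0$ forces $|Q_{p^\nu}(A)|=m$, i.e.\ $m\in G_{(p^\nu,1)}$.

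The genuinely delicate case is $k=0$ with $m\notin G_{(p^\nu,1)}$. Here $P=1$ and Corollary \ref{corBoundE1}(2) reduces the lower bound to the arithmetic statement: if $m\notin G_{(p^\nu,1)}$ then $|Q_{p^\nu}(A)|\le \tfrac{2m}{3}$ (note $|Q_{p^\nu}(A)|<m$ is already known by Lemma \ref{lemQR}, so the content is the factor $\tfrac23$). By Lemma \ref{corFinerE} it suffices to prove this when the Sylow $2$-subgroup of $A$ is $(\mathbb{Z}_2)^\beta$, which maximises $|Q_{p^\nu}(A)|$ among abelian groups of order $m$; then Proposition \ref{propFactor} with Remark \ref{remNA2}(2) writes $|Q_{p^\nu}(A)|$ explicitly in terms of the factorisation $m=2^\beta m_0m_1m_2\cdots$. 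One then feeds in the characterisation of $G_{(p^\nu,1)}$ from Theorem \ref{thmChar}: $m\notin G_{(p^\nu,1)}$ forces one of $m_0>1$, $\beta$ exceeding the $2$-adic valuation of $p^\nu+1$, at least two of the $m_\alpha$ with $\alpha\ge1$ nontrivial, or a nontrivial $m_\alpha$ with $\alpha\ge2$ in the presence of $\beta\ge2$, and in each subcase one bounds $|Q_{p^\nu}(A)|/m$ by $\tfrac23$ using the elementary facts $m_0\ge3$, $m_\alpha\ge3$, and $\sum_{\alpha\ge2}(m_\alpha-1)\le\prod_{\alpha\ge2}m_\alpha-1$. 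This case bookkeeping — tracking all the ways $m$ can fail to be good while keeping the power of $2$ under control — is the main obstacle, and it is precisely the step that parallels the proof of \cite[Theorem 25]{S2003}.
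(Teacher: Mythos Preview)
Your proposal is correct and follows essentially the same route the paper sketches: derive everything from the closed formula of Theorem~\ref{thmAVG}, control $|Q_{p^\nu}(A)|$ via the estimates in Proposition~\ref{propFactor}/Corollary~\ref{corBoundQ}, and for the residual $k=0$ case with $m\notin G_{(p^\nu,1)}$ fall back on the case bookkeeping of \cite[Theorem~25]{S2003}. The paper's own proof is a one-line deferral to exactly these ingredients, so your write-up is simply a more explicit version of what the paper intends; the only minor deviation is that for $k\ge 1$ you get away with the trivial bound $|Q_{p^\nu}(A)|\le m$ rather than invoking Corollary~\ref{corBoundQ}, which is a harmless simplification.
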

%

The above results imply that  $ {\rm avg}_{p^\nu}^{\rm E}(A\times\mathbb{Z}_{p^k})$ is zero or grows as the same rate with $mp^k$. Note that if $A$  is a cyclic group, these results coincide with   \cite[Theorem~25]{S2003}.

\subsection{The Average Dimension of the Hermitian Hull of Abelian Codes in PIGAs}

As the Hermitian inner product is defined over a finite field of square order. Here we  focus on  abelian codes in  $\mathbb{F}_{{p^{2\nu}}}[A\times\mathbb{Z}_{2^k}]$, where $k\geq 0$  and $\nu\geq 1$ are  integers and $p$ is a prime. We note that  the results in this subsection can be obtained using the arguments analogous to those in Subsection 4.2. The different is the decomposition of the group algebra $\mathcal{R}:=\mathbb{F}_{{p^{2\nu}}}[A]$. Therefore, some of the proof will be omitted. For convenience, the theorem numbers are given in the form $4.N'$  if it corresponds to  $4.N$ in Subsection 4.2.

%

For each  $a\in A$, the ${p^{2\nu}}$-cyclotomic class $S_{{p^{2\nu}}}(a)$ is said to be of  {\em type} $\Ip$    if    $S_{{p^{2\nu}}}(a)=S_{p^{\nu}}(-p^\nu  a)$ or {\em type} $\IIp$  if $S_{{p^{2\nu}}}(a)\ne S_{{p^{2\nu}}}(- p^\nu a)$. 

Without loss of generality, assume that  $b_1, b_2, \dots, b_t$  are representatives of  the  ${p^{2\nu}}$-cyclotomic classes         such that $\{b_j| j=1,2,\dots,{r_\Ip}\}$  and $\{b_{r_\Ip+l}, b_{r_{\Ip}+r_{\IIp}+l}=-p^\nu b_{r_{\Ip}+l} \mid j=1,2,\dots, r_{\IIp}\}$ are  sets of representatives of $p^{2\nu}$-cyclotomic classes of $A$ of types $\Ip$  and $\IIp$, respectively, where $t=r_{\Ip}+2r_{\IIp}$.    Further, assume that $|S_{p^{2\nu}}(b_i)|=t_i$  for all $1\leq i\leq t$. Clearly, $t_{r_{\Ip}+l}=t_{r_{\Ip}+r_{\IIp}+l}$ for all $1\leq l\leq r_{\IIp}$.

Rearranging the terms in the decomposition of  $\mathcal{R}$ in  \eqref{eq-decom0} based on the above $2$ types  of $p^{2\nu}$-cyclotomic classes (see \cite{JLS2014}),   we have 
\begin{align}
\mathbb{F}_{{p^{2\nu}}}[A\times\mathbb{Z}_{p^k}]    &
\cong  \left( \prod_{j=1}^{r_{\Ip}} \mathbf{K}_j[\mathbb{Z}_{p^k}]  \right) \times \left( \prod_{l=1}^{r_{\IIp}} (\mathbf{L}_l[\mathbb{Z}_{p^k}] \times \mathbf{L}_l [\mathbb{Z}_{p^k}] )\right), \label{eqSemiSim2}
\end{align}
where  $ \mathbf{K}_j\cong\mathbb{F}_{p^{\nu2t_j}}$ for all   $j=1,2,\dots, r_{\Ip}$  and  $ \mathbf{L}_l  \cong \mathbb{F}_{p^{\nu2t_{r_\Ip+l}}} $      for all $l=1,2,\dots, r_{\IIp}$. 

{
    From \eqref{eqSemiSim2},  an abelian code $C$ in  $\mathbb{F}_{{p^{2\nu}}}[A\times\mathbb{Z}_{p^k}] $  can be viewed as 
    \begin{align}\label{decomC2} 
    C\cong   \left(\prod_{j=1}^{r_{\Ip}} \mathcal{C}_j  \right)\times \left(\prod_{l=1}^{r_{\IIp}} \left( \mathcal{D}_l\times \mathcal{D}_l^\prime\right) \right), \end{align}
    where $\mathcal{C}_j$, $\mathcal{D}_l$ and $\mathcal{D}_l^\prime$ are   cyclic   codes in    $\mathbf{K}_j[\mathbb{Z}_{p^k}]$, $\mathbf{L}_l[\mathbb{Z}_{p^k}]$ and $\mathbf{L}_l[\mathbb{Z}_{p^k}]$, respectively,  for all   $j=1,2,\dots,r_{\Ip}$ and  $l=1,2,\dots,r_{\IIp}$.

    In  \cite[Section II.D]{JLS2014} and Remark \ref{remE=H}, the Hermitian  dual of $C$  in (\ref{decomC2}) is 

    \begin{align} 
    C^{\perp_{\rm H}}\cong   \left(\prod_{j=1}^{r_{\Ip}} \mathcal{C}_j ^{\perp_{\rm E}} \right)\times \left(\prod_{l=1}^{r_{\IIp}} \left( (\mathcal{D}_l^\prime) ^{\perp_{\rm E}}\times  \mathcal{D}_l^{\perp_{\rm E}}\right) \right).
    \end{align}

}

\renewcommand{\thelemma}{\ref{correspEP}$'$}
\begin{lemma}  \label{correspHP}
    There is a one-to-one correspondence between  $\mathcal{C}(p^{2\nu},A\times\mathbb{Z}_{p^k})$ and  $\{(\epsilon_1,\epsilon_2, \dots, \epsilon_t)\mid 0\leq \epsilon_i\leq p^k \text{ for all } 1\leq i\leq t\}$, where $t=r_\Ip+2r_\IIp$.
\end{lemma}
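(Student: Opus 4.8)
The plan is to mirror the proof of Lemma~\ref{correspEP}, now working with the Hermitian decomposition \eqref{decomC2} and the associated $p^{2\nu}$-cyclotomic classes of types $\Ip$ and $\IIp$. The key structural input is already in place: \eqref{eqSemiSim2} exhibits $\mathbb{F}_{p^{2\nu}}[A\times\mathbb{Z}_{p^k}]$ as a product of rings of the form $\mathbf{K}_j[\mathbb{Z}_{p^k}]$ and $\mathbf{L}_l[\mathbb{Z}_{p^k}]\times\mathbf{L}_l[\mathbb{Z}_{p^k}]$, and \eqref{decomC2} says that an abelian code $C$ corresponds uniquely to a tuple of cyclic codes $(\mathcal{C}_1,\dots,\mathcal{C}_{r_\Ip},\mathcal{D}_1,\mathcal{D}_1^\prime,\dots,\mathcal{D}_{r_\IIp},\mathcal{D}_{r_\IIp}^\prime)$, with each factor a cyclic code of length $p^k$ over a field of characteristic $p$.

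First I would invoke Remark~\ref{remE=H}: a cyclic code of length $p^k$ over $\mathbb{F}_{p^{\nu 2 t_i}}$ is uniquely generated by $(x-1)^{\epsilon_i}$ for a unique $\epsilon_i$ with $0\le \epsilon_i\le p^k$, and its dimension over that field is $p^k-\epsilon_i$. Hence each cyclic-code factor appearing in \eqref{decomC2} is pinned down by a single integer in $\{0,1,\dots,p^k\}$. Since the correspondence $C\leftrightarrow(\mathcal{C}_j,\mathcal{D}_l,\mathcal{D}_l^\prime)$ coming from \eqref{decomC2} is a bijection (it is just the ring decomposition, and ideals of a product are products of ideals of the factors), composing it with the map that records the generating exponents of each cyclic factor gives the desired bijection. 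Concretely, I would define
\begin{align*}
C\mapsto\bigl(&\dim_{\mathbf{K}_1}(\mathcal{C}_1),\dots,\dim_{\mathbf{K}_{r_\Ip}}(\mathcal{C}_{r_\Ip}),\\
&\dim_{\mathbf{L}_1}(\mathcal{D}_1),\dots,\dim_{\mathbf{L}_{r_\IIp}}(\mathcal{D}_{r_\IIp}),\dim_{\mathbf{L}_1}(\mathcal{D}_1^\prime),\dots,\dim_{\mathbf{L}_{r_\IIp}}(\mathcal{D}_{r_\IIp}^\prime)\bigr),
\end{align*}
exactly as in Lemma~\ref{correspEP}, and observe $t=r_\Ip+2r_\IIp$. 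Injectivity follows because the tuple determines each $\mathcal{C}_j,\mathcal{D}_l,\mathcal{D}_l^\prime$ uniquely (Remark~\ref{remE=H}), hence determines $C$ via \eqref{decomC2}; surjectivity follows because any prescribed tuple of exponents in $\{0,\dots,p^k\}$ is realized by the corresponding product of cyclic codes $\langle(x-1)^{\epsilon_i}\rangle$.

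Honestly, there is essentially no obstacle here — the statement is the Hermitian analogue of Lemma~\ref{correspEP}, and the only thing that changed is the bookkeeping: the decomposition \eqref{eqSemiSim2} uses the types $\Ip,\IIp$ and the fields $\mathbf{K}_j\cong\mathbb{F}_{p^{\nu 2t_j}}$, $\mathbf{L}_l\cong\mathbb{F}_{p^{\nu 2t_{r_\Ip+l}}}$ in place of their type $\I,\II$ and $\mathbb{K}_j,\mathbb{L}_l$ counterparts. The one point worth a sentence of care is that each cyclic-code factor genuinely has exactly $p^k+1$ ideals indexed by $0,\dots,p^k$, which is precisely the content of Remark~\ref{remE=H} and holds regardless of which characteristic-$p$ field we are over; with that in hand the cardinalities and the bijection match up term by term. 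Thus the proof is a verbatim adaptation of the proof of Lemma~\ref{correspEP}, and I would simply state it as such.
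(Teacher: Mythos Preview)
Your proposal is correct and follows exactly the approach the paper intends: the paper omits the proof of this lemma as a direct analogue of Lemma~\ref{correspEP}, and your argument is precisely that adaptation, using the decomposition \eqref{decomC2} together with Remark~\ref{remE=H} to define the dimension map and verify bijectivity.
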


The following corollary can be obtained using Remark \ref{remE=H}, Lemma \ref{correspHP} and the above discussion.

\renewcommand{\thelemma}{\ref{hullE}$'$}
\begin{lemma} \label{hullH}
    Let  $C$ be an abelian code in $\mathbb{F}_{p^{2\nu}} [ A\times\mathbb{Z}_{p^k}]$ decomposed as (\ref{decomC2}).  Then  the following statements hold.
    \begin{enumerate}
        \item    The Hermitian hull of $C$ 
        \begin{align}
        \mathcal{H}_{\rm H}(C)&\cong\left(\prod_{j=1}^{r_\Ip} (\mathcal{C}_j \cap  \mathcal{C}_j^{\perp_{\rm E}})  \right) \times \left(\prod_{s=1}^{r_{\IIp}} \left( (\mathcal{D}_s\cap(\mathcal{D}_s^\prime) ^{\perp_{\rm E}})\times  (\mathcal{D}_s^\prime \cap \mathcal{D}_s^{\perp_{\rm E}})\right) \right).
        \end{align}
        
        \item    If  $C$ corresponds to $(\epsilon_1, \epsilon_2,\dots,\epsilon_t)$ with $t=r_\Ip+2r_\IIp$ as in Lemma \ref{correspHP}, then 
        \begin{align*} 
        \dim(\mathcal{H}_{\rm H}(C))=&\sum_{j=1} ^{r_\Ip} t_j\min(\epsilon_j, p^k-  \epsilon_j) \notag  +\sum_{l=1}^{r_\IIp}    t_{r_\Ip+l}    \min(\epsilon_{r_\Ip+l}, p^k-  \epsilon_{r_\Ip+r_\IIp+l})     \notag
        \\
        &+\sum_{l=1}^{r_\IIp}    t_{r_\Ip+l}    \min(\epsilon_{r_\Ip+r_\IIp+l}, p^k-  \epsilon_{r_\Ip+l})    .
        \end{align*}
    \end{enumerate}
    
\end{lemma}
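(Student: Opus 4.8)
The plan is to mirror, in the Hermitian setting, the argument already used for Lemma~\ref{hullE}, replacing the role of the Euclidean $p^\nu$-cyclotomic classes of types $\I$ and $\II$ by the $p^{2\nu}$-cyclotomic classes of types $\Ip$ and $\IIp$. Recall that under the decomposition \eqref{eqSemiSim2} an abelian code $C$ in $\mathbb{F}_{p^{2\nu}}[A\times\mathbb{Z}_{p^k}]$ corresponds to a tuple $\left(\prod_{j}\mathcal{C}_j\right)\times\left(\prod_{l}(\mathcal{D}_l\times\mathcal{D}_l')\right)$ of cyclic codes of length $p^k$ over the respective extension fields, and the Hermitian dual is given by the displayed formula $C^{\perp_{\rm H}}\cong\left(\prod_{j}\mathcal{C}_j^{\perp_{\rm E}}\right)\times\left(\prod_{l}\left((\mathcal{D}_l')^{\perp_{\rm E}}\times\mathcal{D}_l^{\perp_{\rm E}}\right)\right)$. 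The point is that intersection of ideals respects the product decomposition: if $C\cong\prod_i C_i$ and $C^{\perp_{\rm H}}\cong\prod_i C_i'$ componentwise, then $\mathcal H_{\rm H}(C)=C\cap C^{\perp_{\rm H}}\cong\prod_i (C_i\cap C_i')$. Applying this to the explicit forms above immediately yields part~(1):
\[
\mathcal{H}_{\rm H}(C)\cong\left(\prod_{j=1}^{r_\Ip}(\mathcal{C}_j\cap\mathcal{C}_j^{\perp_{\rm E}})\right)\times\left(\prod_{l=1}^{r_\IIp}\left((\mathcal{D}_l\cap(\mathcal{D}_l')^{\perp_{\rm E}})\times(\mathcal{D}_l'\cap\mathcal{D}_l^{\perp_{\rm E}})\right)\right).
\]

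For part~(2), I would pass to dimensions componentwise. By Remark~\ref{remE=H}, a cyclic code of length $p^k$ over any extension $\mathbb{F}_{p^{\nu s}}$ generated by $(x-1)^{\epsilon}$ has $\mathbb{F}_{p^{\nu s}}$-dimension $p^k-\epsilon$ and its Euclidean dual is generated by $(x-1)^{p^k-\epsilon}$; hence the intersection of the codes generated by $(x-1)^{\epsilon}$ and $(x-1)^{\epsilon'}$ is generated by $(x-1)^{\max(\epsilon,\epsilon')}$ and has $\mathbb{F}_{p^{\nu s}}$-dimension $p^k-\max(\epsilon,\epsilon')=\min(p^k-\epsilon,\,p^k-\epsilon')$. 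Using the correspondence of Lemma~\ref{correspHP}, write $\epsilon_j$ for the generator exponent of $\mathcal{C}_j$ ($1\le j\le r_\Ip$), $\epsilon_{r_\Ip+l}$ for that of $\mathcal{D}_l$, and $\epsilon_{r_\Ip+r_\IIp+l}$ for that of $\mathcal{D}_l'$ ($1\le l\le r_\IIp$). Then $\mathcal{C}_j\cap\mathcal{C}_j^{\perp_{\rm E}}$ has $\mathbb{F}_{p^{\nu\cdot 2t_j}}$-dimension $\min(p^k-\epsilon_j,\epsilon_j)=\min(\epsilon_j,p^k-\epsilon_j)$; the pair $\mathcal{D}_l\cap(\mathcal{D}_l')^{\perp_{\rm E}}$ has dimension $\min(\epsilon_{r_\Ip+l},\,p^k-\epsilon_{r_\Ip+r_\IIp+l})$ and $\mathcal{D}_l'\cap\mathcal{D}_l^{\perp_{\rm E}}$ has dimension $\min(\epsilon_{r_\Ip+r_\IIp+l},\,p^k-\epsilon_{r_\Ip+l})$, each over $\mathbb{F}_{p^{\nu\cdot 2t_{r_\Ip+l}}}$, using $t_{r_\Ip+l}=t_{r_\Ip+r_\IIp+l}$.

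Finally I would convert these extension-field dimensions into $\mathbb{F}_{p^{2\nu}}$-dimensions. A one-dimensional space over $\mathbb{F}_{p^{2\nu t}}$ is $t$-dimensional over $\mathbb{F}_{p^{2\nu}}$; since $|S_{p^{2\nu}}(b_i)|=t_i$, the component indexed by $i$ contributes a factor $t_i$. Summing over the three blocks and using the additivity of dimension over the direct product in part~(1) gives exactly
\begin{align*}
\dim(\mathcal{H}_{\rm H}(C))=&\sum_{j=1}^{r_\Ip}t_j\min(\epsilon_j,p^k-\epsilon_j)+\sum_{l=1}^{r_\IIp}t_{r_\Ip+l}\min(\epsilon_{r_\Ip+l},p^k-\epsilon_{r_\Ip+r_\IIp+l})\\
&+\sum_{l=1}^{r_\IIp}t_{r_\Ip+l}\min(\epsilon_{r_\Ip+r_\IIp+l},p^k-\epsilon_{r_\Ip+l}).
\end{align*}
This is entirely parallel to Lemma~\ref{hullE}, so I expect no genuine obstacle; the only point requiring a little care is bookkeeping of the field of scalars (working over $\mathbb{F}_{p^{2\nu}}$ rather than each $\mathbf K_j$ or $\mathbf L_l$) and verifying that the Hermitian dual on the $\IIp$-blocks swaps the two coordinates exactly as written, which is where the $\min(\epsilon_{r_\Ip+l},p^k-\epsilon_{r_\Ip+r_\IIp+l})$ cross-terms arise. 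Both of these are already recorded in the discussion preceding the lemma and in \cite[Section II.D]{JLS2014}, so the proof reduces to citing them.
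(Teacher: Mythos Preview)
Your approach is correct and is exactly what the paper intends: the paper gives no explicit proof of this lemma, merely stating that it ``can be obtained using Remark~\ref{remE=H}, Lemma~\ref{correspHP} and the above discussion'' and that the whole subsection is ``analogous to'' the Euclidean case, so your componentwise-intersection argument followed by the dimension count via Remark~\ref{remE=H} is precisely the intended route.

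One small bookkeeping slip: you write ``$\epsilon_j$ for the generator exponent of $\mathcal{C}_j$'', but in Lemma~\ref{correspHP} (as in Lemma~\ref{correspEP}) the tuple $(\epsilon_1,\dots,\epsilon_t)$ records the \emph{dimensions} of the components, not the exponents of $(x-1)$. Under your stated convention, the $\IIp$-block intersection $\mathcal{D}_l\cap(\mathcal{D}_l')^{\perp_{\rm E}}$ would have dimension $\min(p^k-\epsilon_{r_\Ip+l},\,\epsilon_{r_\Ip+r_\IIp+l})$, not $\min(\epsilon_{r_\Ip+l},\,p^k-\epsilon_{r_\Ip+r_\IIp+l})$. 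The formula you actually wrote is the correct one for the paper's convention (and for the statement of the lemma), so the inconsistency is harmless for the type-$\Ip$ terms by symmetry and happens to land on the right answer for the type-$\IIp$ terms; just fix the sentence identifying $\epsilon_j$.
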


Let  
$
R_{{p^{2\nu}}}(A)=\{a\in A\mid -p^\nu \cdot a\in S_{{p^{2\nu}}}(a)\}.
$
It is not difficult to see   that   $R_{{p^{2\nu}}}(A)$ is  the union of all   ${p^{2\nu}}$-cyclotomic classes of $A$ of type $\Ip$ and $|R_{p^{2\nu}}(A)|=\sum\limits_{j=1}^{r_\Ip} t_j$, where $t_j=S_{p^{2\nu}}(b_j)$ for all $1\leq j\leq r_\Ip$.

\renewcommand{\thetheorem}{\ref{thmAVG}$'$}
\begin{theorem} \label{thmAVGH} Let $p$ be  a prime and let $\nu$ be a positive integer. Let $A$ be a finite abelian group of order $m$ such that $p\nmid m$. Then   
    \begin{align*}
    {\rm avg}_{{p^{2\nu}}}^{\rm H}(A\times\mathbb{Z}_{p^k})=mp^k\left( \frac{1}{3}-\frac{1}{6(p^k+1)}\right)- |R_{{p^{2\nu}}}(A)|\left( \frac{p^k+1}{12}+\frac{2-3\delta_p}{12(p^k+1)}\right),
    \end{align*}
    where 
    $\delta_p=\begin{cases}
    1 &\text{ if }p=2,\\
    0&\text{ if }p \text{ is odd}.
    \end{cases}$
\end{theorem}

\renewcommand{\thecorollary}{\ref{corBoundE1}$'$}
\begin{corollary}\label{corBoundH1}
    Let $p$ be  a prime and let $\nu$ be a positive integer.  Let $A$ be a finite abelian group of order $m$ such that $p\nmid m$. Then  the following statements hold.
    
    \begin{enumerate}

        \item  $ {\rm avg}_{{p^{2\nu}}}^{\rm H}(A\times\mathbb{Z}_{p^k})< \frac{mp^k}{3} $. 
        
        \item  $ {\rm avg}_{{p^{2\nu}}}^{\rm H}(A)= \frac{m-|R_{{p^{2\nu}}}(A)|}{4}$. 
        \item  $ {\rm avg}_{{p^{2\nu}}}^{\rm H}(A)\leq \frac{m-1}{4}$. 
    \end{enumerate}
    
\end{corollary}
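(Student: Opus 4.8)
The plan is to obtain all three statements as immediate consequences of Theorem~\ref{thmAVGH}, in exact analogy with the way Corollary~\ref{corBoundE1} is deduced from Theorem~\ref{thmAVG}; the substantive work is already absorbed into Theorem~\ref{thmAVGH}, so what remains is a specialization together with one elementary estimate. The single preliminary fact I would record is that $|R_{p^{2\nu}}(A)|\ge 1$: indeed $-p^\nu\cdot 0=0\in S_{p^{2\nu}}(0)=\{0\}$, so $0\in R_{p^{2\nu}}(A)$, which is the Hermitian counterpart of the observation $0\in Q_{p^{\nu}}(A)$ used for the Euclidean bounds.

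For part~(1), I would start from the expression for $\mathrm{avg}_{p^{2\nu}}^{\mathrm H}(A\times\mathbb{Z}_{p^k})$ furnished by Theorem~\ref{thmAVGH} and drop the (nonpositive) term $-\,|R_{p^{2\nu}}(A)|\bigl(\frac{p^k+1}{12}+\frac{2-3\delta_p}{12(p^k+1)}\bigr)$. Here one checks that the coefficient
\[
\frac{p^k+1}{12}+\frac{2-3\delta_p}{12(p^k+1)}
\]
is strictly positive: for $p$ odd this is clear since then $2-3\delta_p=2$, while for $p=2$ it equals $\frac{(p^k+1)^2-1}{12(p^k+1)}$, which is positive because $p^k+1\ge 2$. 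Since $|R_{p^{2\nu}}(A)|\ge 0$, discarding that term gives
\[
\mathrm{avg}_{p^{2\nu}}^{\mathrm H}(A\times\mathbb{Z}_{p^k})\;\le\; mp^k\Bigl(\tfrac13-\tfrac1{6(p^k+1)}\Bigr)\;<\;\frac{mp^k}{3},
\]
the last strict inequality holding because $mp^k>0$ and $\frac1{6(p^k+1)}>0$.

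For parts~(2) and~(3), I would specialize Theorem~\ref{thmAVGH} to $k=0$, so that $p^k=1$ and $p^k+1=2$: the first summand collapses to $m\bigl(\tfrac13-\tfrac1{12}\bigr)=\tfrac m4$, and the coefficient of $|R_{p^{2\nu}}(A)|$ becomes $\tfrac16+\tfrac{2-3\delta_p}{24}$, which for $p$ odd simplifies to $\tfrac14$, so that $\mathrm{avg}_{p^{2\nu}}^{\mathrm H}(A)=\frac{m-|R_{p^{2\nu}}(A)|}{4}$; the remaining case $p=2$ is handled by the very same specialization of Lemma~\ref{hullH} and arithmetic as in the Euclidean Corollary~\ref{corBoundE1}(2). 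Part~(3) is then immediate from $|R_{p^{2\nu}}(A)|\ge 1$, which forces $\frac{m-|R_{p^{2\nu}}(A)|}{4}\le\frac{m-1}{4}$. I expect no real obstacle: the only point requiring attention is the sign bookkeeping of the term $2-3\delta_p$ (in particular the case $p=2$), and everything else is a direct transcription of the Euclidean argument with $Q_{p^\nu}(A)$ replaced by $R_{p^{2\nu}}(A)$ and Theorem~\ref{thmAVG} replaced by Theorem~\ref{thmAVGH}; the genuine content of the result lives entirely in Theorem~\ref{thmAVGH}.
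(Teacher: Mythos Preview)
Your proposal is correct and matches the paper's approach exactly: the paper gives no explicit proof for this corollary, treating it as the direct Hermitian analogue of Corollary~\ref{corBoundE1}, which in turn is deduced from Theorem~\ref{thmAVG} together with the single observation $0\in Q_{p^\nu}(A)$. Your derivation from Theorem~\ref{thmAVGH} via $0\in R_{p^{2\nu}}(A)$ is precisely this, and your extra care in flagging the $p=2$ case of part~(2) (where the coefficient from Theorem~\ref{thmAVGH} comes out as $\tfrac18$ rather than $\tfrac14$ and one must instead compute directly from Lemma~\ref{hullH}) is in fact more thorough than the paper itself.
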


Let   $\lambda$ be the  function defined by

\begin{align*}
\lambda(d,q)=
\begin{cases}
1 &\text{ if } d \in OG_{(q,1)},\\
0 &\text{otherwise.} \\		
\end{cases}
\end{align*}
Then the function $\lambda$ and oddly-good integers play a role in determining         $ {\rm avg}_{{p^{2\nu}}}^{\rm H}(A\times\mathbb{Z}_{p^k})$ in the following results.

\renewcommand{\thelemma}{\ref{propType}$'$}
\begin{lemma}[{\cite[Lemma 3.5]{JLS2014} }] \label{propTypeH}  Let $p$ be  a prime and let $\nu$ be a positive integer. Let $A$ be a finite  abelian group    with $ p\nmid |A|$  and let  $a\in A$. Then  
    $S_{{p^{2\nu}}}(a) $ is of type $\Ip$  if and only if ${\rm ord}(a)\in OG_{(p^\nu,1)}$.
\end{lemma}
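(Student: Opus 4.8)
The plan is to translate the definition of a type $\Ip$ cyclotomic class directly into a statement about multiplicative orders, and then to recognise the resulting condition as exactly the defining condition of an oddly-good integer with respect to $p^\nu$ and $1$. Write $n = {\rm ord}(a)$ for the additive order of $a$ in $A$; since $p \nmid |A|$ we have $\gcd(p,n)=1$, so $p^\nu$ is a unit modulo $n$ and all the expressions $p^{\nu i}\cdot a$ below make sense and are governed by arithmetic in $\mathbb{Z}_n^\times$. By definition, $S_{p^{2\nu}}(a)$ is of type $\Ip$ precisely when $-p^\nu\cdot a \in S_{p^{2\nu}}(a)$, i.e. when $-p^\nu\cdot a = (p^{2\nu})^{j}\cdot a = p^{2\nu j}\cdot a$ for some integer $j\geq 0$. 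Since $a$ has additive order $n$, this is equivalent to the congruence $-p^\nu \equiv p^{2\nu j}\,({\rm mod}\, n)$, that is, $p^{\nu(2j-1)} \equiv -1\,({\rm mod}\, n)$ (using that $p^\nu$ is invertible mod $n$). Setting $k = 2j-1$, which ranges over all positive odd integers as $j$ ranges over positive integers (and $k=-1$, giving $p^{-\nu}\equiv -1$, is equivalent to $p^{\nu}\equiv -1$, i.e. the case $k=1$), we conclude that $S_{p^{2\nu}}(a)$ is of type $\Ip$ if and only if there exists an odd integer $k\geq 1$ with $(p^\nu)^k \equiv -1\,({\rm mod}\, n)$.

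The second step is to observe that the condition ``there exists an odd $k\geq 1$ with $(p^\nu)^k \equiv -1 \,({\rm mod}\, n)$'' is, by definition, the statement that $n \mid (p^\nu)^k + 1^k$ for some odd $k\geq 1$, i.e. that $n \in OG_{(p^\nu,1)}$. Since $n = {\rm ord}(a)$, this is exactly the claimed equivalence: $S_{p^{2\nu}}(a)$ is of type $\Ip$ if and only if ${\rm ord}(a)\in OG_{(p^\nu,1)}$.

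Two minor points deserve care. First, one must check that $\gcd({\rm ord}(a), p^\nu)=1$ so that oddly-goodness with respect to $p^\nu$ and $1$ is well-defined and so that $p^\nu$ really is invertible modulo ${\rm ord}(a)$; this follows immediately from $p\nmid |A|$ and ${\rm ord}(a)\mid |A|$. Second, when ${\rm ord}(a)=1$ (the case $a=0$) the class is automatically type $\Ip$ and $1$ is oddly-good, so the equivalence holds trivially there; the argument above already covers this since the congruence $p^\nu\equiv -1\,({\rm mod}\,1)$ holds. I do not expect any real obstacle here — the content is a direct unwinding of definitions — the only thing to be vigilant about is the bookkeeping with negative exponents of $p^\nu$ modulo $n$, and the reconciliation of the index set $\{2j-1 : j\geq 1\}$ with the set of all positive odd integers. (This lemma is in fact quoted from \cite[Lemma 3.5]{JLS2014}, so one may alternatively simply cite it; the sketch above is the self-contained argument.)
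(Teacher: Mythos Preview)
Your argument is correct. The paper itself does not prove this lemma at all; it simply cites \cite[Lemma 3.5]{JLS2014}, as you note at the end of your sketch. Your direct unwinding of the definitions --- translating the type-$\Ip$ condition $-p^\nu\cdot a\in S_{p^{2\nu}}(a)$ into the congruence $p^{\nu(2j-1)}\equiv -1\pmod{{\rm ord}(a)}$ and then identifying this with the definition of ${\rm ord}(a)\in OG_{(p^\nu,1)}$ --- is exactly the right self-contained argument, and your handling of the boundary case $j=0$ (where the exponent $2j-1=-1$ reduces to the $k=1$ case after inverting) is fine.
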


From Lemma \ref{propTypeH}, we have the following result.

\renewcommand{\thelemma}{\ref{lemQR}$'$}
\begin{lemma} \label{lemQRH}  Let $p$ be  a prime and let $\nu$ be a positive integer. Let $A$ be a finite abelian group of order $m$ and exponent $M$ with $p\nmid m$. Then   
    \begin{align*} |R_{{p^{2\nu}}}(A)|= \sum_{d\mid M}\lambda(d,p^\nu)\mathcal{N}_{A}(d),
    \end{align*}
    where  $\mathcal{N}_{A}(d)$ is the number of elements of order $d$ in $A$ determined in \cite{B1997}.
\end{lemma}

\renewcommand{\theproposition}{\ref{propFactor}$'$}
\begin{proposition} \label{propFactorH} Let $p$ be  a prime and let $\nu$ be a positive integer. 
    Let $A$ be a finite abelian group of order  	$m=2^\beta m_0 m_1m_2m_3 \dots$, where $m_\alpha\in \langle\langle \mathcal{P}_{p^\nu,\alpha}\rangle\rangle$ for all $\alpha\geq 0$. Let $\gamma\ge 0$ be  an integer such that  $2^\gamma || (p^\nu+1) $. Then  
    \begin{align}\label{Rq2A}
    |R_{{p^{2\nu}}}(A)|=  m_1\sum_{i=0}^\gamma \mathcal{N}_A(2^i) . \notag
    \end{align}
\end{proposition}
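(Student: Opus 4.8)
The plan is to mimic the proof of Proposition~\ref{propFactor}, with the role of $G_{(p^\nu,1)}$ played by $OG_{(p^\nu,1)}$. Starting from Lemma~\ref{lemQRH},
\[
|R_{p^{2\nu}}(A)| = \sum_{d\mid M}\lambda(d,p^\nu)\mathcal{N}_A(d) = \sum_{\substack{d\mid M\\ d\in OG_{(p^\nu,1)}}}\mathcal{N}_A(d),
\]
so it suffices to identify the oddly-good divisors of $M=2^{\mathcal{B}}M_0M_1M_2\cdots$ (with $M_\alpha\in\langle\langle\mathcal{P}_{p^\nu,\alpha}\rangle\rangle$) and then sum $\mathcal{N}_A$ over them. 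Throughout I would use the remark after Theorem~\ref{oddlygood}: ${\rm ord}_{2^{\beta'}}(p^\nu)=2$ is equivalent to $2^{\beta'}\mid(p^\nu+1)$, hence to $\beta'\le\gamma$.

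First I would record the same case split used in the proof of Proposition~\ref{propFactor}. If $\mathcal{B}=0$ (which includes $p=2$, since $p\nmid m$), then $M$ is odd, so by Proposition~\ref{odd-good} and Corollary~\ref{cor-oddly} the oddly-good divisors of $M$ are exactly the divisors $d'$ of $M_1$, and $|R_{p^{2\nu}}(A)|=\sum_{d'\mid M_1}\mathcal{N}_A(d')$. If $\mathcal{B}\neq 0$, then $2\mid m$, so $p$ is odd and $\gamma\ge 1$; reading off Theorem~\ref{oddlygood}(1) together with Proposition~\ref{odd-good}, the oddly-good divisors of $M$ are precisely the integers $2^{\beta'}d'$ with $d'\mid M_1$ and $0\le\beta'\le\min\{\mathcal{B},\gamma\}$ --- the bound $\beta'\le\gamma$ coming from cases (c),(d) of the theorem (and being vacuous for $\beta'\in\{0,1\}$), the bound $\beta'\le\mathcal{B}$ from $d\mid M$, and $d'\mid M_1$ from the requirement that every prime of the odd part lie in $\mathcal{P}_{p^\nu,1}$.

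Next I would carry out the summation, using that $\mathcal{N}_A$ is multiplicative on coprime arguments (via the primary decomposition $A=A_2\times A_{2'}$). In the case $\mathcal{B}\neq 0$,
\[
|R_{p^{2\nu}}(A)| = \sum_{\beta'=0}^{\min\{\mathcal{B},\gamma\}}\sum_{d'\mid M_1}\mathcal{N}_A(2^{\beta'}d') = \Bigl(\sum_{\beta'=0}^{\min\{\mathcal{B},\gamma\}}\mathcal{N}_A(2^{\beta'})\Bigr)\Bigl(\sum_{d'\mid M_1}\mathcal{N}_A(d')\Bigr).
\]
Since $\mathcal{B}$ is the $2$-adic valuation of the exponent of $A$, the convention $\mathcal{N}_A(2^i)=0$ for $i>\mathcal{B}$ gives $\sum_{\beta'=0}^{\min\{\mathcal{B},\gamma\}}\mathcal{N}_A(2^{\beta'})=\sum_{i=0}^{\gamma}\mathcal{N}_A(2^i)$. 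For the odd factor, $\sum_{d'\mid M_1}\mathcal{N}_A(d')$ counts the elements of $A$ whose order divides $M_1$, i.e.\ $|A[M_1]|$; because $M_1=\prod_{\ell\in\mathcal{P}_{p^\nu,1}}\ell^{v_\ell(M)}$ and $v_\ell(M)$ is exactly the exponent of the $\ell$-primary component $A_\ell$, we get $A[M_1]=\prod_{\ell\in\mathcal{P}_{p^\nu,1}}A_\ell$, of order $m_1$. The same identity $\sum_{d'\mid M_1}\mathcal{N}_A(d')=m_1$ closes the case $\mathcal{B}=0$ as well (there $\sum_{i=0}^{\gamma}\mathcal{N}_A(2^i)=\mathcal{N}_A(1)=1$), and combining the two cases yields $|R_{p^{2\nu}}(A)|=m_1\sum_{i=0}^{\gamma}\mathcal{N}_A(2^i)$.

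I expect the only delicate point to be the second paragraph: extracting from Theorem~\ref{oddlygood} and Proposition~\ref{odd-good} the precise description of the oddly-good divisors of $M$ --- in particular that $\beta'\in\{0,1\}$ imposes no extra constraint while $\beta'\ge 2$ forces $\beta'\le\gamma$, and that the odd part $d'$ is oddly-good iff $d'\mid M_1$. After that, the computation is the routine multiplicative bookkeeping already performed in the proof of Proposition~\ref{propFactor}. An alternative, shorter route: since $G_{(p^\nu,1)}=OG_{(p^\nu,1)}\cup EG_{(p^\nu,1)}$ with the part of $G_{(p^\nu,1)}$ that is not oddly-good supported, by Theorem~\ref{thm-even}, on $\langle\langle\bigcup_{\alpha\ge2}\mathcal{P}_{p^\nu,\alpha}\rangle\rangle$, one may instead subtract the term $(1+\mathcal{N}_A(2))^{\min\{1,\beta\}}\sum_{\alpha\ge2}(m_\alpha-1)$ from the expression for $|Q_{p^\nu}(A)|$ in Proposition~\ref{propFactor}.
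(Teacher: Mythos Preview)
Your proof is correct and follows essentially the same route as the paper's: start from Lemma~\ref{lemQRH}, identify the oddly-good divisors of $M$ as the divisors of $2^{\min\{\mathcal{B},\gamma\}}M_1$ via Theorem~\ref{oddlygood}, and then sum $\mathcal{N}_A$ multiplicatively. The paper compresses all of this into four displayed equalities without the explicit $\mathcal{B}=0$ versus $\mathcal{B}\neq 0$ split (it absorbs both into the single bound $\min\{\mathcal{B},\gamma\}$), whereas you spell out the justification for each step; otherwise the arguments coincide.
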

\begin{proof} Let  $	M=2^\mathcal{B} M_0 M_1M_2M_3 \dots $ be the exponent of $A$,   where $M_\alpha$  is in   $\langle\langle \mathcal{P}_{p^\nu,\alpha}\rangle\rangle$ for all $\alpha\geq 0$.
    Observe  that $M^\prime| m^\prime$ and  $M_\alpha|m_\alpha$  for all $\alpha\geq 0$ and $\beta\geq \mathcal{B}$.   
    It follows that 
    
    \begin{align}
    |R_{{p^{2\nu}}}(A)|&= \sum_{d\mid M}\lambda(d,q)\mathcal{N}_{A}(d)\notag\\
    &= \sum_{d\mid M, d\in OG(q,1)} \mathcal{N}_{A}(d)\notag\\
    &= \sum_{d\mid  2^{\min\{\mathcal{B},\gamma\}} M_1} \mathcal{N}_{A}(d) \notag\\
    &=  m_1\sum_{i=0}^\gamma \mathcal{N}_A(2^i). \notag
    \end{align}
    as desired.
\end{proof}

\renewcommand{\thelemma}{\ref{corFinerE}$'$}

\begin{lemma} \label{corFinerH} Let $p$ be  a prime and let $\nu$ be a positive integer. Let $A $ and $\mathcal{A}$ be an abelian groups of the same order $m$ and $p\nmid m$.   If   the Sylow $2$-subgroup of  $A$   is finer than  the Sylow $2$-subgroup of $ B$, then  
    
    \[ |R_{{p^{2\nu}}}(A)|\geq|R_{{p^{2\nu}}}(B)| .\]
\end{lemma}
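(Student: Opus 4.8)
The plan is to follow the proof of Lemma~\ref{corFinerE}, replacing the Euclidean count of Proposition~\ref{propFactor} by the Hermitian count of Proposition~\ref{propFactorH}. First I would write $m=2^\beta m_0m_1m_2m_3\dots$ with $m_\alpha\in\langle\langle\mathcal{P}_{p^\nu,\alpha}\rangle\rangle$ for all $\alpha\ge 0$; this factorization depends only on $m$, so the factor $m_1$ is common to $A$ and $B$. By Proposition~\ref{propFactorH},
\[
|R_{p^{2\nu}}(A)|=m_1\sum_{i=0}^{\gamma}\mathcal{N}_A(2^i)
\qquad\text{and}\qquad
|R_{p^{2\nu}}(B)|=m_1\sum_{i=0}^{\gamma}\mathcal{N}_B(2^i),
\]
so the assertion $|R_{p^{2\nu}}(A)|\ge|R_{p^{2\nu}}(B)|$ reduces to the single inequality $\sum_{i=0}^{\gamma}\mathcal{N}_A(2^i)\ge\sum_{i=0}^{\gamma}\mathcal{N}_B(2^i)$.

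Next I would observe that this is precisely inequality~\eqref{finer1}, which was already established inside the proof of Lemma~\ref{corFinerE} using only the hypothesis that the Sylow $2$-subgroup of $A$ is finer than that of $B$. For completeness one recalls the argument: writing $\mathcal{B}_A$ for the exponent of the Sylow $2$-subgroup of $A$, the finer hypothesis gives $\mathcal{B}_A\le\mathcal{B}_B$ and $\mathcal{N}_A(2^i)\ge\mathcal{N}_B(2^i)$ for $0\le i\le\mathcal{B}_A$; if $\gamma\le\mathcal{B}_A$ the inequality holds termwise, while if $\gamma>\mathcal{B}_A$ one uses $\sum_{i=0}^{\gamma}\mathcal{N}_A(2^i)=\sum_{i=0}^{\mathcal{B}_A}\mathcal{N}_A(2^i)=2^\beta\ge\sum_{i=0}^{\gamma}\mathcal{N}_B(2^i)$. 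This yields the lemma; moreover, if the Sylow $2$-subgroups of $A$ and $B$ are isomorphic, all of these inequalities are equalities, so $|R_{p^{2\nu}}(A)|=|R_{p^{2\nu}}(B)|$.

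I do not expect a genuine obstacle here: the Hermitian formula in Proposition~\ref{propFactorH} is strictly simpler than its Euclidean counterpart in Proposition~\ref{propFactor}, lacking both the term $\sum_{\alpha\ge 2}(m_\alpha-1)$ and the factor $(1+\mathcal{N}_A(2))^{\min\{1,\beta\}}$, so the monotonicity collapses onto \eqref{finer1} alone and no analogue of inequality~\eqref{finer2} is required. The only points worth a line of justification are that $m_1$ is determined by $m$ alone (immediate from the definition of $\mathcal{P}_{p^\nu,1}$) and that $\sum_{i=0}^{\mathcal{B}_A}\mathcal{N}_A(2^i)$ equals the order $2^\beta$ of the Sylow $2$-subgroup of $A$.
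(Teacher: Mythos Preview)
Your proposal is correct and matches the paper's intended argument: the paper omits the proof of Lemma~\ref{corFinerH}, noting at the start of the Hermitian subsection that all results follow by arguments analogous to those in the Euclidean subsection, so the proof is exactly the reduction via Proposition~\ref{propFactorH} to inequality~\eqref{finer1} that you describe. Your observation that the Hermitian formula drops the $\sum_{\alpha\ge 2}(m_\alpha-1)$ term, so that only \eqref{finer1} and not \eqref{finer2} is needed, is precisely the simplification one expects, and your completion of the $\gamma>\mathcal{B}_A$ case via $\sum_{i=0}^{\mathcal{B}_A}\mathcal{N}_A(2^i)=2^\beta$ is in fact a small clarification over what the paper writes in the proof of Lemma~\ref{corFinerE}.
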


From Lemma \ref{corFinerH} and Remark \ref{remNA2},  we have the following bounds for $ |R_{{p^{2\nu}}}(A)|$.

\renewcommand{\thecorollary}{\ref{corBoundQ}$'$}

\begin{corollary} \label{corBoundQH}   Let $p$ be  a prime and let $\nu$ be a positive integer.  Let $A$ be a finite abelian group of order  	$m=2^\beta m_0 m_1m_2m_3 \dots $, where $m_\alpha$ is in   $\langle\langle \mathcal{P}_{p^\nu,\alpha}\rangle\rangle$ for all $\alpha\geq 0$. Let $\gamma\ge 0$ be  an integer such that  $2^\gamma || (p^\nu+1) $.  Then we have  
    
    \[2^{\min\{\beta,\gamma\}} m_1  \leq |R_{{p^{2\nu}}}(A)|\leq  2^{\min \{\beta, \gamma\beta\}} m_1.\]
    
\end{corollary}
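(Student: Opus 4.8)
The plan is to reduce the statement to Proposition \ref{propFactorH} and then exploit the monotonicity of $|R_{p^{2\nu}}(\cdot)|$ under refinement of the Sylow $2$-subgroup, in exact parallel with the way Corollary \ref{corBoundQ} follows from Proposition \ref{propFactor} via Lemma \ref{corFinerE} and Remark \ref{remNA2}. First I would invoke Proposition \ref{propFactorH} to write
\[
|R_{p^{2\nu}}(A)| = m_1 \sum_{i=0}^{\gamma} \mathcal{N}_A(2^i),
\]
so that it suffices to prove $2^{\min\{\beta,\gamma\}} \le \sum_{i=0}^{\gamma} \mathcal{N}_A(2^i) \le 2^{\min\{\beta,\gamma\beta\}}$, the factor $m_1$ being determined by $m$ (and $p^\nu$) alone and hence the same for all abelian groups of order $m$.

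Next, note that $\sum_{i=0}^{\gamma}\mathcal{N}_A(2^i)$ counts only elements of $2$-power order, so it depends solely on the Sylow $2$-subgroup $B$ of $A$, a group of order $2^\beta$. Using Lemma \ref{corFinerH} together with the displayed identity, if $B_1 \preceq B_2$ are abelian $2$-groups of the same order then $\sum_{i=0}^{\gamma}\mathcal{N}_{B_1}(2^i) \ge \sum_{i=0}^{\gamma}\mathcal{N}_{B_2}(2^i)$. Since every abelian $2$-group $B$ of order $2^\beta$ satisfies $(\mathbb{Z}_2)^\beta \preceq B \preceq \mathbb{Z}_{2^\beta}$, the sum is maximised when $B$ is elementary abelian of rank $\beta$ and minimised when $B$ is cyclic of order $2^\beta$.

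Finally I would substitute the two extreme values recorded in Remark \ref{remNA2}: for a cyclic Sylow $2$-subgroup $\mathbb{Z}_{2^\beta}$ one has $\sum_{i=0}^{\gamma}\mathcal{N}_A(2^i) = 2^{\min\{\beta,\gamma\}}$, and for an elementary abelian Sylow $2$-subgroup $(\mathbb{Z}_2)^\beta$ one has $\sum_{i=0}^{\gamma}\mathcal{N}_A(2^i) = 2^{\min\{\beta,\gamma\beta\}}$. Multiplying through by $m_1$ yields the asserted two-sided bound, and the degenerate cases $\beta = 0$ or $\gamma = 0$ are automatically consistent since both sides collapse to $m_1$. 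I do not expect a genuine obstacle here; the only point needing care is orienting the two $\min$-expressions correctly, namely verifying that $2^{\min\{\beta,\gamma\}} \le 2^{\min\{\beta,\gamma\beta\}}$ and that the cyclic (coarsest) case gives the lower extreme while the elementary abelian (finest) case gives the upper one, both of which are immediate from the refinement order and the explicit formulas in Remark \ref{remNA2}.
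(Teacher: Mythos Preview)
Your proposal is correct and follows essentially the same approach as the paper: the paper derives the corollary directly from Lemma \ref{corFinerH} and Remark \ref{remNA2}, using that $(\mathbb{Z}_2)^\beta \preceq B \preceq \mathbb{Z}_{2^\beta}$ for every abelian $2$-group $B$ of order $2^\beta$, exactly as in the proof of the Euclidean analogue Corollary \ref{corBoundQ}. Your explicit reduction via Proposition \ref{propFactorH} to bounding $\sum_{i=0}^{\gamma}\mathcal{N}_A(2^i)$ simply spells out the same argument in more detail.
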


\renewcommand{\theremark}{\ref{sumAVGE}$'$}
\begin{remark}
    Let $p$ be  a prime and let $\nu$ be a positive integer. Let $A$ be a finite abelian group of order $m$ such that $p\nmid m$.    Then we have the following observations

    \begin{enumerate}
        \item     The value  $ {\rm avg}_{p^\nu}^{\rm E}(A\times\mathbb{Z}_{p^k})$ can be determined by substituting  the value of $|Q_{p^{\nu}}(A)|$ from  Proposition   \ref{propFactorH} in to Theorem \ref{thmAVGH}.
        
        \item Some lower and upper bounds  of  $ {\rm avg}_{p^{2\nu}}^{\rm H}(A\times\mathbb{Z}_{p^k})$ can be computed by substituting  the bounds  of $|R{p^{2\nu}}(A)|$ from  Corollary \ref{corBoundQH}  in to Theorem \ref{thmAVGH}.
        
        \item  If the Sylow $2$-subgroup of $A$ is trivial (or equivalently, $m$ is odd), then   $   {\rm avg}_{p^{2\nu}}^{\rm H}(A\times \mathbb{Z}_{p^k})$  is independent of $A$. Precisely,   $   {\rm avg}_{p^{2\nu}}^{\rm H}(A\times \mathbb{Z}_{p^k})$ depends only on the cardinality  $m$ of $A$.

    \end{enumerate}
    
\end{remark}

Using Theorem \ref{thmAVGH}, Corollary \ref{corBoundQH}, and the arguments similar to those in the proof of  \cite[Theorem 25]{S2003},  we deduce the following bounds.

\renewcommand{\thecorollary}{\ref{corBoundE}$'$}
\begin{corollary}
    Let $p$ be  a prime and let $\nu$ be a positive integer. Let $A$ be a finite abelian group of order $m$ such that $p\nmid m$. Then one of  the following statements hold.
    
    \begin{enumerate}
        \item  $ {\rm avg}_{{p^{2\nu}}}^{\rm H}(A\times\mathbb{Z}_{p^k})=0$ if and only if  $k=0$ and  $m\in OG_{(q,1)}$.

        \item If $k>0$ or $m\notin OG_{(p^\nu,1)}   $,  then      $ \frac{mp^k}{8} \leq   {\rm avg}_{p^{2\nu}}^{\rm H}(A\times\mathbb{Z}_{p^k})< \frac{mp^k}{3} $.  
    \end{enumerate}
    
\end{corollary}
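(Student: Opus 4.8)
The plan is to extract both statements directly from the closed‐form expression for ${\rm avg}_{p^{2\nu}}^{\rm H}(A\times\mathbb{Z}_{p^{k}})$ in Theorem~\ref{thmAVGH}, mirroring the Euclidean argument that yields Corollary~\ref{corBoundE} (which in turn follows \cite[Theorem~25]{S2003}), with $|R_{p^{2\nu}}(A)|$ and oddly‐good integers playing the roles of $|Q_{p^{\nu}}(A)|$ and good integers. Two structural observations drive the whole proof: the coefficient of $|R_{p^{2\nu}}(A)|$ in Theorem~\ref{thmAVGH}, namely $\tfrac{p^{k}+1}{12}+\tfrac{2-3\delta_{p}}{12(p^{k}+1)}$, is strictly positive for every prime $p$ and every $k\ge 0$ (a one‐line check separately for $p=2$ and for $p$ odd), so the average is a strictly decreasing function of $|R_{p^{2\nu}}(A)|$; and $0\in R_{p^{2\nu}}(A)$, so there is always at least one $p^{2\nu}$‐cyclotomic class of type $\Ip$.

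For part~(1) I would split on $k$. If $k>0$, then $p^{k}\ge 2$; choosing the type‐$\Ip$ component attached to $0\in A$ and the cyclic code inside it of $\mathbb{F}_{p^{2\nu}}$‐dimension $1$ makes the corresponding term $t_{j}\min(1,p^{k}-1)=t_{j}\ge 1$ in Lemma~\ref{hullH}, so at least one abelian code has positive‐dimensional Hermitian hull and ${\rm avg}_{p^{2\nu}}^{\rm H}(A\times\mathbb{Z}_{p^{k}})>0$. If $k=0$, Corollary~\ref{corBoundH1}(2) gives ${\rm avg}_{p^{2\nu}}^{\rm H}(A)=\frac{m-|R_{p^{2\nu}}(A)|}{4}$, which vanishes exactly when $|R_{p^{2\nu}}(A)|=m$; by Lemma~\ref{lemQRH} together with Corollary~\ref{cor-oddly} (the oddly‐good counterpart of the ``in particular'' clause of Lemma~\ref{lemQR}) this occurs iff every element order of $A$, equivalently $m$ itself, lies in $OG_{(p^{\nu},1)}$. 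Combining the two cases gives the stated equivalence.

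For part~(2), the upper bound ${\rm avg}_{p^{2\nu}}^{\rm H}(A\times\mathbb{Z}_{p^{k}})<\tfrac{mp^{k}}{3}$ is precisely Corollary~\ref{corBoundH1}(1). For the lower bound I would feed an upper estimate for $|R_{p^{2\nu}}(A)|$ into Theorem~\ref{thmAVGH}. When $k>0$, the crude bound $|R_{p^{2\nu}}(A)|\le m$ already suffices: by the monotonicity noted above, ${\rm avg}_{p^{2\nu}}^{\rm H}(A\times\mathbb{Z}_{p^{k}})\ge mp^{k}\bigl(\tfrac13-\tfrac1{6(p^{k}+1)}\bigr)-m\bigl(\tfrac{p^{k}+1}{12}+\tfrac{2-3\delta_{p}}{12(p^{k}+1)}\bigr)$, and the inequality ``this quantity $\ge \tfrac{mp^{k}}{8}$'' reduces, after clearing denominators and writing $n=p^{k}$, to $n(n-1)\ge 0$ when $p=2$ and to $(n-2)(n+1)\ge 0$ when $p$ is odd, both of which hold since $n=p^{k}\ge 2$. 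When $k=0$ and $m\notin OG_{(p^{\nu},1)}$, I would instead use the sharper estimate $|R_{p^{2\nu}}(A)|\le 2^{\min\{\beta,\gamma\beta\}}m_{1}$ from Corollary~\ref{corBoundQH} and run a short case analysis on the factorization $m=2^{\beta}m_{0}m_{1}m_{2}\cdots$ (as in \cite[Theorem~25]{S2003}), using Theorem~\ref{oddlygood} to see that the failure of $m$ to be oddly‐good forces $|R_{p^{2\nu}}(A)|\le m/2$, whence ${\rm avg}_{p^{2\nu}}^{\rm H}(A)=\frac{m-|R_{p^{2\nu}}(A)|}{4}\ge\frac{m}{8}=\frac{mp^{k}}{8}$.

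I expect the last sub‐case — $k=0$ with $m\notin OG_{(p^{\nu},1)}$ — to be the only genuine obstacle: unlike the others it is not a purely numerical estimate, but requires the structural description of $OG_{(p^{\nu},1)}$ in Theorem~\ref{oddlygood} to guarantee that once $m$ is not oddly‐good, at least half the elements of $A$ have order outside $OG_{(p^{\nu},1)}$, i.e.\ $|R_{p^{2\nu}}(A)|\le m/2$. Everything else is routine manipulation of the formula in Theorem~\ref{thmAVGH} together with the bounds on $|R_{p^{2\nu}}(A)|$ already recorded in Corollaries~\ref{corBoundH1} and \ref{corBoundQH}.
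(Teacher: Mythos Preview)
Your overall strategy matches the paper's: both feed the formula of Theorem~\ref{thmAVGH} and the bound of Corollary~\ref{corBoundQH} into the argument of \cite[Theorem~25]{S2003}, and your treatment of the case $k>0$ (including the two quadratic inequalities) is correct.

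The genuine gap is precisely the $k=0$ sub-case you flag as ``the only genuine obstacle''. Neither the equivalence ``every element order of $A$ lies in $OG_{(p^{\nu},1)}$, equivalently $m$ itself'' used in part~(1), nor the implication ``$m\notin OG_{(p^{\nu},1)}\Rightarrow |R_{p^{2\nu}}(A)|\le m/2$'' used in part~(2), follows from Theorem~\ref{oddlygood} or Corollary~\ref{cor-oddly} for a general abelian group $A$. Those results, and Lemma~\ref{lemQRH}, are controlled by the \emph{exponent} of $A$, not its order $m$; the ``in particular'' clause of Lemma~\ref{lemQR} that you invoke by analogy is not stated for Lemma~\ref{lemQRH}, and the analog does not hold. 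Concretely, take $p=5$, $\nu=1$ (so $2^{\gamma}\,\|\,(p^{\nu}+1)$ gives $\gamma=1$) and $A=\mathbb{Z}_{2}\times\mathbb{Z}_{2}$. Every element of $A$ has order $1$ or $2$, both oddly-good, so $|R_{25}(A)|=4=m$ and ${\rm avg}_{25}^{\rm H}(A)=0$; yet $m=4\notin OG_{(5,1)}$ because $5^{k}+1\equiv 2\pmod 4$ for all $k\ge 1$. Thus the bound $|R_{p^{2\nu}}(A)|\le m/2$ you need is violated, and the route you sketch cannot close the gap as stated. The argument you outline goes through verbatim only when $A$ is cyclic (so that the exponent equals $m$), which is exactly the setting of \cite{S2003}; for general $A$ the condition ``$m\in OG_{(p^{\nu},1)}$'' has to be replaced by the corresponding condition on the exponent of $A$.
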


The above results imply   that  $ {\rm avg}_{p^{2\nu}}^{\rm H}(A\times\mathbb{Z}_{p^k})$ is zero or grows as the same rate with $mp^k$. Note that if $A$  is a cyclic group, these results coincide with   \cite[Theorem~4.9]{JS2016}.

\section{Conclusion}
A class of good integers  introduced  in $1997$ has been reconsidered.   A complete characterization of  arbitrary  good integers have been  given.  Two subclasses of   good integers have been  introduced, namely, oddly-good and evenly-good integers.  Characterization and properties of  good   integers in these two subclasses  have been determined as well.

As applications,  good integers have been linked with structures and problems  in coding theory (see,  for example,  \cite{JLX2011}, \cite{JLLX2012},  and  \cite{KG1969}).  Here,  the hulls of abelian codes have been  studied using good and oddly-good integers. The   average dimension of the hulls of abelian codes has been  determined under both the Euclidean and Hermitian inner products.     The results for cyclic codes in \cite{S2003} can be viewed as corollaries.


\end{document}